\theoremstyle{plain}
\newtheorem{theorem}{Theorem}[section]
\newtheorem{lemma}[theorem]{Lemma}
\newtheorem{proposition}[theorem]{Proposition}
\newtheorem{corollary}[theorem]{Corollary}
\theoremstyle{definition}
\newtheorem{definition}[theorem]{Definition}
\newtheorem{remark}[theorem]{Remark}
\newtheorem{assumption}[theorem]{Assumption}
\newcommand{\bignorm}[1]{{\Big|\Big|#1\Big|\Big|}}
\newcommand{\norm}[1]{{||#1||}}
\newcommand{\wtilde}[1]{{\widetilde{#1}}}
\def\Id{\mathop{\mathrm{Id}}\nolimits}
\def\Ran{\mathop{\mathrm{Ran}}\nolimits}
\def\Re{\mathop{\mathrm{Re}}\nolimits}
\def\Im{\mathop{\mathrm{Im}}\nolimits}
\def\loc{\mathop{\mathrm{loc}}\nolimits}
\def\Hardy{\mathop{\mathrm{H}}\nolimits}
\def\R{{\mathbb{R}}}
\def\C{{\mathbb{C}}}
\def\S{{\mathcal{S}}}
\def\H{{\mathcal{H}}}
\def\A{{\mathcal{A}}}
\def\B{{\mathcal{B}}}
\def\<{{\langle}}
\def\>{{\rangle}}
\def\ep{{\varepsilon}}
\title
\author{Haruya Mizutani\footnote{Department of Mathematics, Graduate School of Science, Osaka University, Toyonaka, Osaka 560-0043, Japan. E-mail address: \texttt{haruya@math.sci.osaka-u.ac.jp}}
}
\date{\empty}
\begin{document}
\maketitle

\begin{abstract}
The purpose of this paper is to study the validity of global-in-time Strichartz estimates for the Schr\"odinger equation on $\R^n$, $n\ge3$, with the negative inverse-square potential $-\sigma|x|^{-2}$ in the critical case $\sigma=(n-2)^2/4$. It turns out that the situation is different from the subcritical case $\sigma<(n-2)^2/4$ in which the full range of Strichartz estimates is known to be hold. More precisely, splitting the solution into the radial and non-radial parts, we show that (i) the radial part satisfies a weak-type endpoint estimate, which can be regarded as an extension to higher dimensions of the endpoint Strichartz estimate with radial data for the two-dimensional free Schr\"odinger equation; (ii) other endpoint estimates in Lorentz spaces for the radial part fail in general; (iii) the non-radial part satisfies the full range of Strichartz estimates. 
\end{abstract}

%\maketitle
%\footnotetext{Department of Mathematics, Graduate School of Science, Osaka University, Toyonaka, Osaka 560-0043, Japan. E-mail address: \texttt{haruya@math.sci.osaka-u.ac.jp}}

\footnotetext{2010 \textit{Mathematics Subject Classification}.Primary 35Q41; Secondary 35B45}\footnotetext{ %key words and phrases
\textit{Key words and phrases}. Strichartz estimates; Schr\"odinger equation; Inverse-square potential}

\section{Introduction}
\label{section_Introduction}
This paper is concerned with global-in-time dispersive properties of the unitary group $e^{-itH}$ for the Schr\"odinger operator with the inverse-square potential of the form
$$
H:=-\Delta-C_{\Hardy}|x|^{-2},\quad x\in \R^n,
$$
where $n\ge3$, $\Delta=\sum_{j=1}^n\partial_{x_j}^2$ is the Laplacian  and
$$
C_{\Hardy}:=\frac{(n-2)^2}{4}
$$
is the best constant in Hardy's inequality: 
\begin{align}
\label{Hardy}
C_{\Hardy}\int  |x|^{-2}|u|^2dx\le \int|\nabla u|^2dx,\quad u\in C_0^\infty(\R^n).
\end{align}
In particular, we are interested in the validity of Strichartz estimates for $e^{-itH}$. 

Let us first recall the free case. It is well known (see \cite{Str,GiVe,KeTa}) that the free Schr\"odinger evolution group $e^{it\Delta}$ satisfies the following family of space-time inequalities, known as {\it homogeneous and inhomogeneous Strichartz estimates}, respectively:
\begin{align}
\label{Strichartz_1}
\norm{e^{it\Delta}\psi}_{L^p(\R;L^q(\R^n))}&\le C\norm{\psi}_{L^2(\R^n)},\\
\label{Strichartz_2}\bignorm{\int_0^te^{i(t-s)\Delta}F(s)ds}_{L^p(\R;L^q(\R^n))}&\le C\norm{F}_{L^{\tilde p'}(\R;L^{\tilde q'}\R^n))}
\end{align}
for admissible pairs $(p,q)$ and $(\tilde p,\tilde q)$, where $p'=p/(p-1)$ denotes the H\"older conjugate of $p$,  and $(p,q)$ is said to be an ($n$-dimensional) admissible pair if 
\begin{align}
\label{admissible}
p,q\ge2,\quad 2/p+n/q=n/2,\quad(p,q,n)\neq(2,\infty,2).
%\nonumber\tilde p,\tilde q\ge2,\quad 2/\tilde p+n/\tilde q=n/2,\quad(n,\tilde p,\tilde q)\neq(2,2,\infty).
\end{align}
%The estimate \eqref{Strichartz_1} (resp. \eqref{Strichartz_2}) called the homogeneous (resp. inhomogeneous) estimate. 
%These estimates were first proved by \cite{Str} for the case $p=q$ and extended by \cite{GiVe} for $(p,q)$, $(\tilde p,\tilde q)$ satisfying \eqref{admissible} and $p,\tilde p >2$. The endpoint case for $n\ge3$ has been settled by \cite{KeTa}. 
The condition \eqref{admissible} is  necessary and sufficient for the validity of \eqref{Strichartz_1}. In particular, as shown by \cite{Mon,Tao1}, the two-dimensional endpoint estimates  do not hold in general. The proof of \eqref{Strichartz_1} and \eqref{Strichartz_2} is based on the dispersive estimate of the form 
\begin{align}
\label{dispersive}
\norm{e^{it\Delta}\psi}_{L^\infty(\R^n)}\le C|t|^{-n/2}\norm{\psi}_{L^1(\R^n)},\quad t\neq0. 
\end{align}
In fact, \eqref{dispersive} together with the mass conservation $\norm{e^{it\Delta}\psi}_{L^2}=\norm{\psi}_{L^2}$ implies both \eqref{Strichartz_1} and \eqref{Strichartz_2} for all admissible pairs (see \cite{KeTa}). It is worth noting that the Strichartz estimates play an important role in the  study of the well-posedness and scattering theory for nonlinear Schr\"odinger equations (see, {\it e.g.}, monographs \cite{Caz,Tao2}). % or Schr\"odinger equations with singular time-dependent electromagnetic potentials (see \cite{Yaj1,Yaj2}). 

There is also a vast literature on Strichartz estimates for Schr\"odinger equations with potentials  (see \cite{RoSc,BPST1,BPST2,EGS1,EGS2,Gol,DaFa,MMT,DFVV,Bec,KiKo,BoMi} and references therein). In particular, the Schr\"odinger operator with inverse-square potentials of the form $H_\sigma=-\Delta-\sigma|x|^{-2}$ with $\sigma\in \R$ has recently been extensively studied since it represents a borderline case as follows. Both the decay rate $|x|^{-2}$ as $|x|\to+\infty$ and the local singularity $|x|^{-2}$ as $|x|\to0$ are critical for the validity of Strichartz and dispersive estimates (see \cite{GVV,Duy}). %It is also worth noting that \cite{FFFP1} showed that, in case of $n=3$, the dispersive estimate for $e^{-itH_\sigma}$ of the form $$\norm{e^{-itH_\sigma}\psi}_{L^\infty}\le C\norm{\psi}_{L^1},\quad t\neq0$$ does not hold as soon as $\sigma>0$. 
Moreover, the case with $\sigma=C_{\Hardy}$, {\it i.e.}, $H_\sigma=H$, is also critical in the following sense. On one hand, if $\sigma>C_{\Hardy}$, $H_\sigma$ is not lower semi-bounded (due to the fact that $C_{\Hardy}$ is the best constant in \eqref{Hardy}) and has infinitely many negative eigenvalues diverging to $-\infty$ (see \cite{PST1}). In particular, there is no hope to obtain any kind of global-in-time dispersive estimates. On the other hand, if $\sigma<C_{\Hardy}$, the full range of Strichartz estimates is known to be hold (see \cite{BPST1,BPST2,BoMi}) and has been used to study the nonlinear scattering theory (see \cite{ZhZh,KMVZZ,KMVZ}). Note that the method of the proof in the subcritical case essentially relies on the fact that $H_\sigma$ satisfies $-C_1\Delta\le H_\sigma\le -C_2\Delta$ with some constants $C_1,C_2>0$ in the sense of forms, which is not the case for the critical case. In this subcritical case, there are also several results on dispersive estimates for Schr\"odinger equations (see \cite{FFFP1,FFFP2,KoTr}) or wave equations (see \cite{PST1,PST2}). 

In the critical case $\sigma=C_{\Hardy}$, Burq--Planchon--Stalker--Tahvildar-Zadeh \cite{BPST1} obtained local smoothing effects of the form 
$$
\norm{|x|^{-1/2-2\ep}H^{1/4-\ep}e^{-itH}\psi}_{L^2(\R^{1+n})}\le C_\ep\norm{\psi}_{L^2(\R^n)},\quad\ep>0. 
$$
We refer to \cite{Fan,Moc,BVZ,Dan,CaSe,CaFa} and references therein for further references on such weak-dispersive estimates for dispersive equations with singular perturbations. More recently, Suzuki \cite{Suz} proved Strichartz estimates for all admissible pairs {\it except for the endpoint} $(p,q)=(2,2n/(n-2))$ and used them to study the well-posedness of nonlinear Schr\"odinger equations. We also mention that, in the one dimensional case $n=1$, \cite{KoTr} considered $H_\sigma$ in $L^2(\R_+)$ with Dirichlet boundary condition at $x=0$ and showed the dispersive estimate $\norm{e^{-itH_\sigma}}_{L^1\to L^\infty}\le |t|^{-1/2}$ for $\sigma\ge -1/4$. However, there seems to be no previous literature on the endpoint Strichartz estimate if $n\ge3$. 

The main purpose of this paper is to provide a complete answer, in the framework of Lorentz spaces, to the validity of endpoint Strichartz estimates in the critical case $\sigma=C_{\Hardy}$, which turns out to be different from the subcritical case. More precisely, splitting the solution into the radial and non-radial parts, we show that
\begin{itemize}
\item the radial part satisfies a weak-type endpoint estimate, but other endpoint estimates in Lorentz spaces do not hold in general. In particular, the usual endpoint estimates can fail;
\item the non-radial part obeys the full range of Strichartz estimates.
\end{itemize}
We also consider a more general Schr\"odinger operator of the form 
$$
H_V=-\Delta+V,\quad V(x)=-C_{\Hardy}|x|^{-2}+\wtilde V(|x|),
$$
where $\widetilde V$ is a $C^1$ function on $\R_+$ such that $|\wtilde V(r)|\le C\<r\>^{-\mu}$ with some $\mu>3$ and the negative part of $\wtilde V$ is small enough. Then, under appropriate spectral conditions at zero energy, we show that $e^{-itH_V}(\Id-\mathbb P_{\mathrm {pp}})$ satisfies non-endpoint Strichartz estimates, where we refer to Section \ref{section_generalization} for the definition of $\mathbb P_{\mathrm {pp}}$.

%Finally, we mention that our results could be used to study the scattering theory for nonlinear Schr\"odinger equations with the critical inverse-square potential $-C_{\Hardy}|x|^{-2}$, although we do not pursue this topic here. 

\section{Main results}
\label{results}
In what follows we always assume $n\ge3$. Let us define the operator $
H =-\Delta-C_{\Hardy}|x|^{-2}
$
 as the Friedrichs extension of the sesquilinear form
$$
 Q_H(u,v)=\int \Big(\nabla u\cdot\overline{\nabla v}-C_{\Hardy}|x|^{-2}u\overline v\Big)dx,\quad u,v\in C_0^\infty(\R^n),
$$
which is symmetric, non-negative and closable by \eqref{Hardy}. To be more precise, let $ \overline Q_H$ be the closure of $Q_H$ with domain $ D(\overline Q_H)$ given by the completion of $C_0^\infty(\R^n)$ with respect to the norm $(\norm{u}_{L^2}^2+Q_H(u,u))^{1/2}$. Then we define a unique self-adjoint operator $H$ corresponding to $\overline Q_H$ in a usual way (see \cite[Chapter VIII]{ReSi}) that $$D(H)=\{u\in D( \overline Q_H)\ |\ \text{%There exists $C_u>0$ such that 
 $| \overline Q_H(u,v)|\le C_u\norm{v}_{L^2}$ for all $v\in D( \overline Q_H)$}\}$$and, for each $u\in D(H)$, $Hu$ is the unique element in $L^2(\R^n)$ such that $ \overline Q_H(u,v)=(Hu,v)$ for all $v\in D( \overline Q_H)$. %Note that $D(H)$ is dense in $D(\overline Q_H)$ and that since $ \overline Q_H(u,u)$ is non-negative and rotationally invariant, so is $H$.  

In order to state the results, we further introduce several notation. Let $L^{p,q}(\R^n)$ be the Lorentz space (see the end of this section) and we say that $f\in L^{p,q}_{\mathrm{rad}}(\R^n)$ if $f\in L^{p,q}(\R^n)$ and $f$ is radially symmetric. Let $P_n:L^2(\R^n) \to L^2_{\mathrm{rad}}(\R^n)$ be the projection defined by
\begin{align}
\label{projection_1}
P_nf(x)=\frac{1}{\omega_n}\int_{\mathbb S^{n-1}}f(|x|\theta)d\sigma(\theta)
\end{align}
and set $P_n^\perp=\Id-P_n$, where $\omega_n:=|\mathbb S^{n-1}|$ is the area of the unit sphere $\mathbb S^{n-1}$. We often omit the subscript $n$, writing simply $P=P_n$ and $P^\perp=P^\perp_n$ if there is no confusion. Both $P$ and $P^\perp$ commute with $H$ since the potential $-C_{\Hardy}|x|^{-2}$ is radially symmetric. Furthermore, they also commute with $f(H)$ for any bounded Borel function $f$ on $\R$ by the spectral theorem. Then our first result is as follows.
%theorem
\begin{theorem}
\label{theorem_1}
There exists $C>0$ such that 
\begin{align}
\label{theorem_1_1}
\norm{e^{-itH }P\psi}_{L^2(\R;L^{2^* ,\infty}(\R^n))}&\le C\norm{\psi}_{L^2(\R^n)},\\
\label{theorem_1_2}
\norm{e^{-itH }P^\perp\psi}_{L^2(\R;L^{2^* ,2}(\R^n))}&\le C\norm{\psi}_{L^2(\R^n)},\\
\label{theorem_1_3}
\bignorm{\int_0^t e^{-i(t-s)H}P^\perp F(s)ds}_{L^2(\R;L^{2^*,2}(\R^n))}&\le C\norm{F}_{L^2(\R;L^{2_*,2}(\R^n))}
\end{align}
for all $\psi\in L^2(\R^n)$ and $F\in L^2(\R;L^{2_*,2}(\R^n))$.  In particular, one has
$$
\norm{e^{-itH }\psi}_{L^2(\R;L^{2^* ,\infty}(\R^n))}\le C\norm{\psi}_{L^2(\R^n)},\quad\psi\in L^2(\R^n),
$$
 where $
2^*=\frac{2n}{n-2}$ and $
2_*=\frac{2n}{n+2}$.

\end{theorem}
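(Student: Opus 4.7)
The natural strategy is to exploit the spherical symmetry via a spherical-harmonic decomposition. Writing $L^2(\R^n)\cong \bigoplus_{\ell\ge 0} L^2((0,\infty),r^{n-1}dr)\otimes \mathcal Y_\ell$, with $\mathcal Y_\ell$ the space of spherical harmonics of degree $\ell$, the operator $H$ preserves each summand. On the $\ell$-th summand the Liouville-type unitary $u(r)\mapsto r^{(n-1)/2}u(r)$ conjugates $H$ into the half-line Bessel operator $\mathcal L_{\nu_\ell}:=-\partial_r^2+(\nu_\ell^2-1/4)/r^2$ on $L^2((0,\infty),dr)$, where $\nu_\ell=\sqrt{\ell(\ell+n-2)}$. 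The critical choice $\sigma=C_{\Hardy}$ is exactly what produces $\nu_0=0$ for the radial mode, while $\nu_\ell\ge\sqrt{n-1}>1/2$ uniformly for all $\ell\ge 1$; so the radial sector sits at the borderline and the non-radial sector is uniformly subcritical, which is the structural reason behind the split in the statement.

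For the radial estimate \eqref{theorem_1_1} I would work entirely on the reduced operator $\mathcal L_0=-\partial_r^2-1/(4r^2)$. This operator is diagonalized by the Hankel transform of order zero and therefore admits an explicit Bessel-function kernel for $e^{-it\mathcal L_0}$. Using the asymptotics of $J_0$ together with the $L^2$ conservation, one obtains a pointwise-in-time dispersive bound compatible with a $TT^*$ argument in the Lorentz scale, and a real-interpolation step yields the claimed weak-type endpoint. An equivalent perspective, which is suggested by the abstract of the paper, is to observe that $\mathcal L_0$ coincides with the half-line reduction of $-\Delta_{\R^2}$ on radial functions under $w(y):=|y|^{(n-2)/2}\psi(|y|)$; thus the radial part of $e^{-itH}$ on $\R^n$ is intertwined with the radial part of the two-dimensional free Schr\"odinger group, and \eqref{theorem_1_1} is the transfer, through careful tracking of Lorentz distribution functions under this weighted identification, of the known two-dimensional radial weak-type endpoint.

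For the non-radial estimates \eqref{theorem_1_2} and \eqref{theorem_1_3}, the plan is to establish a uniform-in-$\ell$ dispersive-type bound on each Bessel sector and reassemble. Since $\nu_\ell\ge\sqrt{n-1}$ uniformly, the Bessel-kernel estimates for $\mathcal L_{\nu_\ell}$ do not degenerate as $\ell\to\infty$ once summed against the angular projections, in sharp contrast to the radial case. Concretely, I would aim for a Lorentz-scale dispersive estimate $\|e^{-itH}P^\perp\|_{L^{2_*,2}\to L^{2^*,2}}\lesssim |t|^{-1}$, obtained either by summing Hankel-type kernel estimates across angular modes or by a direct spectral-kernel argument using the fact that the zero-energy obstruction responsible for the radial loss is absent on the non-radial sector. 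With such a bound in hand, the Lorentz analogue of the Keel--Tao bilinear $TT^*$ endpoint machinery delivers simultaneously both the homogeneous endpoint \eqref{theorem_1_2} and the inhomogeneous endpoint \eqref{theorem_1_3}, avoiding the Christ--Kiselev step that is usually needed to pass from homogeneous to inhomogeneous.

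The principal difficulty I anticipate lies in the non-radial case: extracting a uniform-in-$\ell$ Lorentz-scale dispersive bound demands tight Bessel-function asymptotics that do not deteriorate with the angular parameter, and the endpoint $TT^*$ interpolation has to be carried out at Lorentz rather than Lebesgue targets. A complementary subtlety for the radial estimate is that only the weak-type $L^{2^*,\infty}$ bound survives in the transfer from two dimensions --- a loss that is genuine in view of the failure statements mentioned in the introduction --- so the argument for \eqref{theorem_1_1} must respect the Lorentz distinction between $L^{2^*,\infty}$ and $L^{2^*,2}$ throughout, and in particular must not try to upgrade the two-dimensional radial input to a strong-type estimate.
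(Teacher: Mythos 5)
Your identification of the structural mechanism is correct: after the Liouville reduction, the radial mode sees the borderline Bessel index $\nu_0=0$ while all non-radial modes have $\nu_\ell\ge\sqrt{n-1}$, and this is indeed what drives the split in the theorem. Your treatment of \eqref{theorem_1_1} is essentially the paper's: the paper conjugates $H\big|_{\mathrm{rad}}$ to $-\Delta_{\R^2}\big|_{\mathrm{rad}}$ via $U f=(\omega_n/\omega_2)^{1/2}|x|^{(n-2)/2}f$, invokes Tao's two-dimensional radial endpoint $\norm{e^{it\Delta}\psi}_{L^2_t L^\infty_x(\R^2)}\le C\norm{\psi}_{L^2}$, and uses H\"older in Lorentz spaces with $|x|^{-(n-2)/2}\in L^{2^*,\infty}(\R^n)$. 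One small imprecision: you describe the two-dimensional input as a ``weak-type'' endpoint, but it is the strong $L^2_t L^\infty_x$ estimate for radial data; the weak-type $L^{2^*,\infty}$ in $\R^n$ is produced purely by the weight $|x|^{-(n-2)/2}$ introduced by $U^*$.

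For \eqref{theorem_1_2} and \eqref{theorem_1_3} there is a genuine gap. You propose to establish a uniform-in-$\ell$ Lorentz-scale dispersive bound $\norm{e^{-itH}P^\perp}_{L^{2_*,2}\to L^{2^*,2}}\lesssim |t|^{-1}$ by summing Bessel-kernel asymptotics across angular modes and then feed this into a Keel--Tao-type $TT^*$ argument. There are two problems. First, such a dispersive bound is not available and is unlikely to be obtainable this way: the paper itself records that $\norm{e^{-itH_\sigma}}_{L^1\to L^\infty}\le C|t|^{-n/2}$ already fails for every $\sigma>0$, and nothing in the Bessel asymptotics singles out the $L^{2_*,2}\to L^{2^*,2}$ pair as recoverable after projecting off the radial sector; summing Hankel kernels over $\ell$ with uniform control in Lorentz norms is a serious open step, not a routine computation. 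Second, even granting such a bound, the Keel--Tao endpoint argument does not run off a single operator-norm dispersive estimate; it needs the full $L^1\to L^\infty$ input together with an interpolation family. The paper avoids the dispersive route entirely. Its actual mechanism is: (i) the improved Hardy inequality $\frac{n^2}{4}\int |x|^{-2}|P^\perp u|^2\,dx\le\int|\nabla u|^2\,dx$, which makes $\norm{H^{1/2}P^\perp u}_{L^2}\sim\norm{\nabla P^\perp u}_{L^2}$ even though $H$ itself is not comparable to $-\Delta$; (ii) a uniform weighted resolvent estimate $\sup_{z\notin\R}\norm{|x|^{-1}P^\perp(H-z)^{-1}P^\perp|x|^{-1}}_{L^2\to L^2}<\infty$, proved by a Morawetz-type multiplier argument in Appendix A; (iii) Kato smoothing, which converts the resolvent bound into $\norm{|x|^{-1}e^{-itH}P^\perp\psi}_{L^2_{t,x}}\le C\norm{\psi}_{L^2}$ and the corresponding inhomogeneous bound; and (iv) an iterated Duhamel expansion $\Gamma=\Gamma_0-i\Gamma_0 V\Gamma_0-\Gamma_0 V\Gamma V\Gamma_0$, which, combined with the free Keel--Tao endpoint in Lorentz form and the weighted $L^2$ bounds, yields both \eqref{theorem_1_2} and \eqref{theorem_1_3}. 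If you want to repair your proposal, the place to do it is to replace the sought dispersive bound by the resolvent/smoothing chain; the uniform-in-$\ell$ gain you correctly identify is what makes the improved Hardy inequality and hence the resolvent estimate go through.
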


Precisely stated, \eqref{theorem_1_3} means that \eqref{theorem_1_3} holds for all $F\in L^2(\R;L^{2_* ,2}(\R^n))\cap L^1_{\mathrm{loc}}(\R;L^2(\R^n))$ and the map $F\mapsto \int_0^te^{-i(t-s)H}P^\perp F(s)ds$ extends to a bounded operator from $L^2(\R;L^{2_*,2}(\R^n))$ to $L^2(\R;L^{2^*,2}(\R^n))$. A similar remark also holds for Corollary \ref{corollary_2} below.

%remark
\begin{remark}
It was shown by Tao \cite{Tao1} that the two-dimensional free endpoint estimate
\begin{align}
\label{Tao}
\norm{e^{it\Delta}\psi}_{L^2(\R;L^\infty(\R^2))}\le C\norm{\psi}_{L^2(\R^2)}
\end{align}
holds for all $\psi\in L^2_{\mathrm{rad}}(\R^2)$. The estimate \eqref{theorem_1_1} can be regarded as an extension of this estimate to higher dimensions. Indeed, $H$ restricted to radial functions is unitarily equivalent to $-\Delta_{\R^2}$ restricted to radial functions.  \eqref{theorem_1_1} actually follows from \eqref{Tao} and this equivalence. 

On the other hand, the proof for the non-radial part is quite different from that for the radial part.  We first use a smooth perturbation method developed in \cite{RoSc,BPST2,BoMi} to deduce \eqref{theorem_1_2} and \eqref{theorem_1_3} from the uniform estimate for the weighted resolvent $|x|^{-1}P^\perp (H-z)^{-1}P^\perp |x|^{-1}$. Then a key observation is the following improved Hardy inequality
$$
\frac{n^2}{4}\int |x|^{-2}|P^\perp u|^2dx\le \int |\nabla u|^2dx,\quad u\in C_0^\infty(\R^n),
$$
which implies the equivalence $\norm{H^{1/2}P^\perp u}_{L^2}\sim \norm{\nabla P^\perp u}_{L^2}$. By virtue of this equivalence, we can use a similar technique as in the subcritical case to obtain the desired resolvent estimate. This method does not rely on the explicit formula of $H$. Indeed, \eqref{theorem_1_2} and \eqref{theorem_1_3} hold for more general class of radially symmetric potentials with critical singularities (see Proposition  \ref{proposition_4}). 
\end{remark}

%Note that, although we have taken $\psi\in \S(\R^n)$ for simplicity, \eqref{theorem_1_1} and a density argument imply that $e^{-itH }$ extends a bounded operator from $L^2(\R^n)$ to $L^2(\R;L^{2^* ,\infty}(\R^n))$. 
%Interpolating between \eqref{theorem_1_1} and the trivial bound from $L^2(\R^n)$ to $L^\infty(\R;L^2(\R^n))$ by using the real interpolation (see, e.g., \cite[Theorem 5.3.1]{BeLo}), 

As a consequence of Theorem \ref{theorem_1}, we obtain usual non-endpoint estimates and inhomogeneous weak-type endpoint estimates except for the double endpoint case.
\begin{corollary}
\label{corollary_2}
Let $(p,q)$ and $(\tilde p,\tilde q) $ satisfy \eqref{admissible} and $p,\tilde p>2$. Then one has
\begin{align}
\label{corollary_2_1}
\norm{e^{-itH }\psi}_{L^p(\R;L^q(\R^n))}&\le C\norm{\psi}_{L^2(\R^n)},\\
\label{corollary_2_2}
\bignorm{\int_0^te^{-i(t-s)H}F(s)ds}_{L^p(\R;L^{q}(\R^n))}&\le C\norm{F}_{L^{\tilde p'}(\R;L^{\tilde q'}(\R^n))},\\
\label{corollary_2_3}
\bignorm{\int_0^te^{-i(t-s)H}F(s)ds}_{L^2(\R;L^{2^* ,\infty}(\R^n))}&\le C\norm{F}_{L^{\tilde p'}(\R;L^{\tilde q'}(\R^n))}
\end{align}
for all $\psi\in L^2(\R^n)$ and $F\in L^{\tilde p'}(\R;L^{\tilde q'}(\R^n))$. 
%for all test functions $\psi\in \S(\R^n)$ and $F\in \S(\R\times\R^n)$. 
\end{corollary}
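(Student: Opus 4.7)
My plan is to deduce Corollary \ref{corollary_2} from Theorem \ref{theorem_1} by the classical combination of real interpolation in the time variable, a $TT^*$/duality argument, and the Christ--Kiselev lemma. The inputs are the homogeneous master endpoint $\norm{e^{-itH}\psi}_{L^2(\R;L^{2^*,\infty}(\R^n))}\le C\norm{\psi}_{L^2}$ stated at the end of Theorem \ref{theorem_1}, together with the trivial energy identity $\norm{e^{-itH}\psi}_{L^\infty(\R;L^2)}=\norm{\psi}_{L^2}$.

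First I would prove \eqref{corollary_2_1} by splitting $\psi=P\psi+P^\perp\psi$ and treating the two parts separately. For the non-radial part the strong Lorentz endpoint \eqref{theorem_1_2}, combined with the energy estimate, gives by the abstract Keel--Tao interpolation the full range of non-endpoint bounds $\norm{e^{-itH}P^\perp\psi}_{L^p(\R;L^q(\R^n))}\le C\norm{\psi}_{L^2}$ for admissible $(p,q)$ with $p>2$. For the radial part I would use the unitary equivalence $v(r)=r^{(n-2)/2}u(r)$ between $H$ on radial functions on $\R^n$ and $-\Delta_{\R^2}$ on radial functions on $\R^2$; transferring the standard non-endpoint Strichartz estimates for the two-dimensional free Schr\"odinger group on radial data (which hold in strong mixed norms away from the 2D endpoint $(2,\infty)$) gives the corresponding bound for $P\psi$, exploiting that the restriction $p>2$ keeps us strictly off the only problematic endpoint. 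Alternatively, one can interpolate \eqref{theorem_1_1} directly with the energy estimate, obtaining $L^p(\R;L^{q,r}(\R^n))$ with a suitable $r\le\infty$, which embeds into the strong $L^pL^q$ in the admissible range covered here.

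The inhomogeneous estimates \eqref{corollary_2_2} and \eqref{corollary_2_3} follow by $TT^*$/duality. Dualizing \eqref{corollary_2_1} for admissible $(\tilde p,\tilde q)$ with $\tilde p>2$ yields
$$
\bignorm{\int_\R e^{isH}F(s)\,ds}_{L^2(\R^n)}\le C\norm{F}_{L^{\tilde p'}(\R;L^{\tilde q'}(\R^n))}.
$$
Composing this bound with the forward estimate \eqref{corollary_2_1} produces the untruncated version of \eqref{corollary_2_2}, while composing it with the master endpoint $\norm{e^{-itH}\cdot}_{L^2L^{2^*,\infty}}\le C\norm{\cdot}_{L^2}$ produces the untruncated version of \eqref{corollary_2_3}. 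The Christ--Kiselev lemma then converts both into the retarded Duhamel integral $\int_0^t e^{-i(t-s)H}F(s)\,ds$; the required strict inequality between source and target time exponents holds because $\tilde p'<2\le p$ (in fact $\tilde p'<2$ suffices for \eqref{corollary_2_3}).

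The main obstacle is the interpolation step for the radial part: since \eqref{theorem_1_1} lives only in the weak space $L^{2^*,\infty}$, naive real interpolation produces Lorentz rather than strong spatial norms. The cleanest way around this is the two-dimensional reduction above, which bypasses the weak endpoint entirely by importing well-known non-endpoint Strichartz estimates for the free 2D group on radial functions; in effect, the weak Lorentz character of Theorem \ref{theorem_1} is confined to the endpoint time exponent $p=2$ and plays no role in the non-endpoint range covered by Corollary \ref{corollary_2}.
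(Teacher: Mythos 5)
Your proposal is correct, but the primary route you describe is genuinely different from the paper's. The paper does not split into radial and non-radial parts for Corollary \ref{corollary_2}: it takes the \emph{combined} weak endpoint $\norm{e^{-itH}\psi}_{L^2_t L^{2^*,\infty}_x}\lesssim\norm{\psi}_{L^2}$ from Theorem \ref{theorem_1}, applies Lions--Peetre real interpolation against the mass conservation $L^\infty_tL^2_x$ to obtain $L^p_tL^{q,p}_x$, and then uses $L^{q,p}\hookrightarrow L^q$, which requires $q\ge p$, i.e.\ $q\ge 2(n+2)/n$. The remaining range $2<q<2(n+2)/n$ is recovered by a further complex interpolation between $(\infty,2)$ and the diagonal pair $(2(n+2)/n,2(n+2)/n)$. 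Your alternative ``interpolate \eqref{theorem_1_1} directly'' is the same idea, but you gloss over this $q\ge p$ obstruction, which is precisely the point the paper's two-step interpolation is designed to address. Your primary approach sidesteps it more cleanly: for $P^\perp\psi$ you start from the \emph{strong} Lorentz endpoint \eqref{theorem_1_2}, so interpolation with $L^\infty_tL^2_x$ lands in $L^p_tL^{q,2}_x\subset L^p_tL^q_x$ with no range restriction; for $P\psi$ you transfer known 2D radial non-endpoint Strichartz estimates through the unitary conjugation \eqref{unitary_1}. The transfer step does require the weight/exponent bookkeeping ($|x|^{-(n-2)(1/2-1/q)}$ and H\"older in Lorentz spaces on $\R^2$) that the paper carries out explicitly in the proof of Theorem \ref{theorem_generalization_1}; your sketch leaves this implicit, but it works. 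In fact, your route is essentially the one the paper later adopts for the more general $H_V$, so it buys uniformity with the generalization, at the cost of being less self-contained here. The inhomogeneous estimates are handled in both treatments by duality plus Christ--Kiselev, with the required strict inequality $\tilde p'<2\le p$ of time exponents; your argument there matches the paper.
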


%remark
\begin{remark}
\label{remark_10}
The non-endpoint estimates \eqref{corollary_2_1} and \eqref{corollary_2_2} (with an appropriate spectral projection) hold for a more general operator of the form $H+\wtilde V$ with a radially symmetric potential $\wtilde V$ such that $|\wtilde V(x)|\le C\<x\>^{-\mu}$ with some $\mu>3$, under certain spectral conditions at zero energy for the two-dimensional operator $-\Delta_{\R^2}+\wtilde V(|x|)$ (see Section \ref{section_generalization}). 
\end{remark}

%remark
%

Theorem \ref{theorem_1} also implies the following weighted $L^2$-estimate. 

\begin{corollary}
\label{corollary_3}
There exists $C>0$ such that, for any $w\in L^{n,2}(\R^n)$ and $\psi\in L^2(\R^n)$, 
\begin{align}
\label{corollary_3_1}
\norm{we^{-itH}\psi}_{L^2(\R^{1+n})}\le C\norm{w}_{L^{n,2}(\R^n)}\norm{\psi}_{L^2(\R^n)}.
\end{align}
%In particular, $\sigma(H)=\sigma_{\mathrm{ac}}(H)=[0,\infty)$. 
\end{corollary}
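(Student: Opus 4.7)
The plan is to derive \eqref{corollary_3_1} as a direct consequence of the combined endpoint estimate
\[
\|e^{-itH}\psi\|_{L^2(\R; L^{2^*, \infty}(\R^n))} \le C\|\psi\|_{L^2(\R^n)}
\]
already established at the end of Theorem \ref{theorem_1}, via a pointwise-in-$t$ application of H\"older's inequality in Lorentz spaces.

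First, I would check the exponent arithmetic: since
\[
\frac{1}{n} + \frac{1}{2^*} = \frac{1}{n} + \frac{n-2}{2n} = \frac{1}{2}, \qquad \frac{1}{2} + \frac{1}{\infty} = \frac{1}{2},
\]
the Lorentz H\"older inequality (see, e.g., O'Neil's theorem) yields, for each fixed $t$,
\[
\|w\, e^{-itH}\psi\|_{L^2(\R^n)} = \|w\, e^{-itH}\psi\|_{L^{2,2}(\R^n)} \le C\|w\|_{L^{n,2}(\R^n)}\, \|e^{-itH}\psi\|_{L^{2^*,\infty}(\R^n)}.
\]
Squaring and integrating in $t\in\R$ gives
\[
\|w\, e^{-itH}\psi\|_{L^2(\R^{1+n})}^2 \le C\|w\|_{L^{n,2}(\R^n)}^2 \int_\R \|e^{-itH}\psi\|_{L^{2^*,\infty}(\R^n)}^2\, dt,
\]
and the last integral is $\|e^{-itH}\psi\|_{L^2(\R;L^{2^*,\infty}(\R^n))}^2$, bounded by $C\|\psi\|_{L^2(\R^n)}^2$ by Theorem \ref{theorem_1}. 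Taking square roots yields \eqref{corollary_3_1}.

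There is no real obstacle: the only nontrivial input is Theorem \ref{theorem_1}, and the H\"older step in Lorentz spaces is standard. One minor point worth noting in the write-up is that the weak-type bound $L^{2^*,\infty}$ on the full evolution (rather than the stronger $L^{2^*,2}$ bound available only on the non-radial part $P^\perp$) is sufficient here precisely because the matched weight lies in $L^{n,2}$, so the second Lorentz index $2$ of the weight pairs with $\infty$ on the solution to produce $L^{2,2}=L^2$. In particular, one does not need to separately treat $P\psi$ and $P^\perp\psi$; the combined endpoint estimate in Theorem \ref{theorem_1} handles both contributions uniformly.
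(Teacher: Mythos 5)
Your proof is correct and is essentially the same as the paper's: the paper also deduces \eqref{corollary_3_1} directly from the combined weak-type endpoint estimate of Theorem \ref{theorem_1} together with H\"older's inequality in Lorentz spaces \eqref{Holder}. Your exponent check $\tfrac1n+\tfrac1{2^*}=\tfrac12$ and the pairing of the second indices $2$ and $\infty$ to give $L^{2,2}=L^2$ is exactly the point, and your remark that the weak-type bound suffices (without needing the stronger $L^{2^*,2}$ bound on the non-radial part) is accurate.
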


%remark
\begin{remark}
 \eqref{corollary_3_1} provides an alternative proof of the absolute continuity of the spectrum of $H$. %, where $\sigma_{\mathrm{ac}}(H)$ is the set of absolutely continuous spectrum of $H$. 
Indeed, if $M_w$ denotes the multiplication operator by $ w$ then  \eqref{corollary_3_1} and \cite[Theorem XIII. 23]{ReSi} show that $\overline{\Ran(M_w)}\subset \H_{\mathrm{ac}}(H)$ for any $w\in L^{n,2}(\R^n)$. Taking $w=\<x\>^{-2}\in L^{n,2}(\R^n)$ for instance, we see that $L^2(\R^n)=\overline{\Ran(M_w)}$ and hence $H$ is purely absolutely continuous. %Since $\sigma(H)=[0,\infty)$, we then obtain $\sigma(H)=\sigma_{\mathrm{ac}}(H)=[0,\infty)$. 
\end{remark}

Finally we obtain the following negative result.
%theorem
\begin{theorem}	
\label{theorem_4}
There exists $\psi\in L^2_{\mathrm{rad}}(\R^n)$ such that,  for any $1\le q<\infty$, $e^{-itH}\psi$ does not belong to $ L^2(\R;L^{2^* ,q}(\R^n))$. Moreover, for any $C>0$, there exists $F\in L^2(\R;L^{2_*,1}_{\mathrm{rad}}(\R^n))$  such that \begin{align}
\label{theorem_4_2}
\bignorm{\int_0^te^{-i(t-s)H}F(s)ds}_{L^2(\R;L^{2^* ,\infty}(\R^n))}>C\norm{F}_{L^{2}(\R;L^{2_*,1}(\R^n))}.
\end{align}
\end{theorem}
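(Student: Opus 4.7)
The backbone of the argument is the unitary equivalence $U\colon L^2_{\mathrm{rad}}(\R^n)\to L^2_{\mathrm{rad}}(\R^2)$, $(Uf)(y):=|y|^{(n-2)/2}f(y)$ (identifying $f$ with its radial profile), already noted in the remark following Theorem \ref{theorem_1}. It intertwines $H$ with $-\Delta_{\R^2}$ on the radial subspace, so $e^{-itH}=U^{-1}e^{it\Delta_{\R^2}}U$ there, and both assertions become pathologies of the $2$D free flow restricted to radial data.

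\emph{First assertion.} I would fix a radial $g\in C_c^\infty(\R^2)$ with $g(0)\neq 0$ (e.g.\ a Gaussian cutoff) and set $\psi:=U^{-1}g\in L^2_{\mathrm{rad}}(\R^n)$, i.e.\ $\psi(x)=|x|^{-(n-2)/2}g(x)$. The intertwining then yields the pointwise identity $(e^{-itH}\psi)(x)=|x|^{-(n-2)/2}G(t,|x|)$, where $G(t,r):=(e^{it\Delta_{\R^2}}g)(y)|_{|y|=r}$ is jointly smooth on $\R\times[0,\infty)$ with $G(0,0)=g(0)\neq 0$. Continuity produces $\delta,r_0>0$ with $|G(t,r)|\ge|g(0)|/2$ on $(-\delta,\delta)\times[0,r_0]$. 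A direct computation of the decreasing rearrangement in polar coordinates then gives
\[
\|\mathbf 1_{\{|x|\le r_0\}}|x|^{-(n-2)/2}\|_{L^{2^*,q}(\R^n)}^q \;=\; c_n\int_0^{r_0}\frac{dr}{r} \;=\; +\infty
\]
for every $q<\infty$, so $\|e^{-itH}\psi(t,\cdot)\|_{L^{2^*,q}(\R^n)}=+\infty$ for every $t\in(-\delta,\delta)$ and the claim follows.

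\emph{Second assertion.} I would argue by contradiction. Assuming the inhomogeneous estimate holds with some constant $C_0$ for all radial $F$, the Lorentz H\"older duality $(L^{2_*,1})^*=L^{2^*,\infty}$ in the spatial variable recasts it as the bilinear bound
\[
\Bigl|\iint_{0<s<t}\langle e^{-i(t-s)H}F(s),G(t)\rangle\,ds\,dt\Bigr|\le C_0\|F\|_{L^2 L^{2_*,1}}\|G\|_{L^2 L^{2_*,1}}
\]
for all radial $F,G$. The plan is to construct test pairs $(F_\lambda,G_\lambda)$ that, transported by $U$, concentrate in space exactly on the $|x|^{-(n-2)/2}$-singularity exploited in the first part and in time on a scale $\lambda\downarrow 0$ near $s=0<t$, and to show that the resulting left-hand side grows strictly faster as $\lambda\to0$ than the right-hand side does.

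The main obstacle is that the \emph{full-time} bilinear form, obtained by integrating over all $(s,t)\in\R^2$ without the restriction $s<t$, is dual to Theorem \ref{theorem_1} by $TT^*$ and so satisfies the analogous bound. The failure must therefore be extracted from the retarded truncation $s<t$ alone: this is the classical Christ-Kiselev obstruction at equal time exponents $L^2_t$--$L^2_t$, at which the Christ-Kiselev lemma does not apply and the full-time bound cannot be reduced to the retarded one. Quantifying how the retarded half-range inherits the pathology of the first assertion --- producing arbitrarily large ratio between $\|\int_0^t e^{-i(t-s)H}F(s)ds\|_{L^2 L^{2^*,\infty}}$ and $\|F\|_{L^2 L^{2_*,1}}$ --- is the technical heart of the proof.
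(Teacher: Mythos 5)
Your argument for the homogeneous assertion is correct and, in fact, takes a genuinely different and more elementary route than the paper. The paper argues by contradiction: from $e^{-itH}\psi\in L^2_tL^{2^*,q}_x$ it deduces weighted space--time $L^2$ bounds with a radial weight $w_0(r)=r^{-1}|\log r|^{-\alpha}\mathds 1_{\{r<1\}}\in L^{n,\sigma}(\R^n)$, then transfers these to the $2$D free flow via \eqref{proof_proposition_3_1} and shows, using Plancherel in time and lower bounds on the Bessel kernel $J_0$ near the origin, that $\|w_0\, e^{it\Delta_{\R^2}}\psi_2\|_{L^2(\R^{1+2})}=\infty$ whenever $\widehat{\psi_2}\not\equiv0$ near zero frequency. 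Your proof instead exhibits the failure pointwise in $t$: taking $\psi=U^{-1}g$ with $g\in C_c^\infty(\R^2)$, $g(0)\neq 0$, the conjugation formula \eqref{proof_proposition_3_1} gives $(e^{-itH}\psi)(x)=c\,|x|^{-(n-2)/2}G(t,|x|)$ with $G$ jointly continuous and $G(0,0)\neq 0$; by the pointwise monotonicity of Lorentz norms and the fact that $\mathds 1_{\{|x|\le r_0\}}|x|^{-(n-2)/2}\in L^{2^*,\infty}\setminus L^{2^*,q}$ for every $q<\infty$, the quantity $\|e^{-itH}\psi(t,\cdot)\|_{L^{2^*,q}}$ is $+\infty$ for all $|t|<\delta$. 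This is shorter than the paper's argument and simultaneously shows that the weak-Lebesgue endpoint \eqref{theorem_1_1} is the exactly right space, since the same computation verifies that $|x|^{-(n-2)/2}$ is in $L^{2^*,\infty}$ locally. It does use the smoothness of $g$, whereas the paper's argument only needs $\widehat{\psi_2}$ non-vanishing near zero.

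For the inhomogeneous assertion you have identified the right difficulty --- that the full-time bilinear form is bounded by duality with \eqref{theorem_1_1}, so the failure must come from the retarded truncation at the Christ--Kiselev endpoint $p=\tilde p=2$ --- but you have not closed the argument, and the concentration strategy you sketch is unlikely to do so. Concentrating $F$ in time on a scale $\lambda\to0$ probes the high-frequency regime, while the inhomogeneous double endpoint fails for a \emph{low-energy} reason: the paper tests the assumed estimate against quasi-stationary data $F_2(s)=-e^{-izs}(-\Delta_{\R^2}-z)f$, takes $T\to\infty$ to strip off the Strichartz constant, and deduces a $z$-uniform weighted resolvent bound $\|\<x\>^{-2}(-\Delta_{\R^2}-z)^{-1}\<x\>^{-2}\|_{L^2\to L^2}\le C$, which then contradicts the $\log\kappa$ singularity of the $2$D resolvent at $z=0$ (Lemma \ref{lemma_4}). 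Concentrating spatially at $|x|=0$ also does not help: the singularity $|x|^{-(n-2)/2}$ that broke the $L^{2^*,q}$ ($q<\infty$) bounds in the first part is harmless in $L^{2^*,\infty}$, which is precisely why \eqref{theorem_1_1} holds; the weak endpoint therefore cannot be defeated by that mechanism, and you would need to inject the zero-resonance obstruction, which your sketch does not contain.
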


Since $L^{p,q_1}\subset L^{p,p}=L^p\subset L^{p,q_2}$ if $1\le q_1<p<q_2\le\infty$, this theorem particularly shows that usual endpoint Strichartz estimates can fail as in the two dimensional free case and that Theorem \ref{theorem_1} is sharp. We however stress that the counterexamples in Theorem \ref{theorem_4} are given by radial functions, while the negative results in the two-dimensional free case are given by non-radial functions (see \cite{Mon,Tao1}). It is worth noting that the dispersive estimate of the form 
$
\norm{e^{-itH_\sigma}}_{L^1\to L^\infty}\le C|t|^{-n/2}$, 
which is stronger than Strichartz estimates, for the operator $H_\sigma=-\Delta-\sigma|x|^{-2}$ fails in general as soon as $\sigma>0$ (see \cite{FFFP1,FFFP3}). 
%As an immediate consequence of this theorem, the dispersive estimate of the form $\norm{e^{-itH}}_{L^1\to L^\infty}\le C|t|^{-n/2}$ does not hold if $n\ge3$. It is worth noting that \cite{FFFP1} showed that, in case of $n=3$, the dispersive estimate for $e^{-itH_\sigma}$ with $H_\sigma=-\Delta-\sigma|x|^{-2}$ fails as soon as $\sigma>0$. 
\bigskip

%remark\begin{remark}Theorems \ref{theorem_1} and \ref{theorem_4} suggest that the radial part of $H$ has a two-dimensional feature, while the non-radial part has a $n$-dimensional character. \end{remark}%remark\begin{remark}\label{remark_3}For potentials $V$ with critical singularities, which are subcritical in the sense that $-\Delta+V$ satisfies $-\Delta+V\ge -\delta\Delta$in the sense of forms for some $\delta>0$, the validity of Strichartz estimates has been extensively studied under various additional conditions on $V$ (see \cite{BPST1,BPST2,DFVV} and references therein). In particular, in contrast to the above critical case, the Schr\"odinger operator with the inverse-square potential $H_\sigma=-\Delta-\sigma|x|^{-2}$ involving a subcritical constant $\sigma<C_{\Hardy}$ satisfies both homogeneous and inhomogeneous  \end{remark}%\begin{theorem}\label{theorem_1}Let $n\ge3$ and $V\in L^\infty(\R^n)$ be real-valued radially and symmetric $V(x)=V(r)$. Suppose there exist $\mu>3$ and $\sigma>(5-2n)/4$ such that\begin{align}0\le V(x)\le C\<x\>^{-\mu},\quad\sigma|x|^{-2}\le -\partial_r(rV)(x).\end{align}Furthermore, $-\Delta_{\R^2}+V(r)$ has neither zero resonance nor zero eigenvalue in the sense that, for any $p\ge1$, there is no solution $f\in L^p(\R^2)$ to $-\Delta_{\R^2}f+V(r)f=0$. $(p,q)$ satisfy \eqref{admissible} and $p>2$. Then one has\begin{align}\label{theorem_1_1}\norm{e^{-itH }\psi}_{L^p(\R;L^q(\R^n))}\le C\norm{\psi}_{L^2(\R^n)},\quad \psi\in L^2(\R^n). \end{align}\end{theorem}
\noindent{\it Notation}. Throughout the paper we use the following notation. $\<x\>$ stands for $\sqrt{1+|x|^2}$. $\<f,g\>:=\int f(x)\overline{g(x)}dx$ denotes the inner product in $L^2(\R^n)$. We write $L^p_tL^q_x=L^p(\R;L^q(\R^n))$ for short. For positive constants $A$ and $B$, $A\sim B$ means that there exists universal constants $C_1>C_2>0$ such that $C_2B\le A\le C_1B$. For $1<p<\infty$ and $1\le q\le\infty$, $L^{p,q}(\R^n)$ denotes the Lorentz space equipped with the norm $\norm{\cdot}_{L^{p,q}(\R^n)}$ satisfying 
$$
\norm{f}_{L^{p,q}(\R^n)}\sim \norm{t d_f(t)^{1/p}}_{L^q(\R_+,t^{-1}dt)},
$$
where $d_f(t):=\mu(\{x\in \R^n\ |\ |f(x)|>t\})$ is the distribution function of $f$. % and $\mu$ is the Lebesgue measure. 
We use the convention $L^{\infty,\infty}=L^\infty$. For $1\le p,p_1,p_2<\infty$ and $1\le q,q_1,q_2\le\infty$ satisfying $1/p=1/p_1+1/p_2$, $1/q=1/q_1+1/q_2$, we have H\"older's inequality
\begin{equation}
\begin{aligned}
\label{Holder}
\norm{fg}_{L^{p,q}}\le C\norm{f}_{L^{p_1,q_1}}\norm{g}_{L^{p_2,q_2}},\quad
\norm{fg}_{L^{p,q}}\le C\norm{f}_{L^{\infty}}\norm{g}_{L^{p,q}}.
\end{aligned}
\end{equation}
It is not hard to see that $|x|^{-\alpha}\in L^{n/\alpha,\infty}(\R^n)$ for $0<\alpha\le n$. We refer to  \cite{Gra} for more details on Lorentz spaces
\bigskip

The rest of the paper is organized as follows. Section \ref{section_3} is devoted to the proof of Theorem \ref{theorem_1} and Corollaries \ref{corollary_2} and \ref{corollary_3}. The proof of Theorem \ref{theorem_4} is given in Section \ref{section_counterexample}. Section \ref{section_generalization} discusses a generalization of Corollaries \ref{corollary_2}. In appendix \ref{appendix_A}, we prove the  uniform estimate for the weighted resolvent $|x|^{-1}P^\perp (H-z)^{-1}P^\perp|x|^{-1}$.
 
\section{Proof of Theorem \ref{theorem_1}}
\label{section_3}
We first prove the weak-type endpoint estimate for radial data.
%proposition
\begin{proposition}
\label{proposition_3}
For all $\psi\in L^2(\R^n)$, $e^{-itH }P\psi$ satisfies \eqref{theorem_1_1}. 
\end{proposition}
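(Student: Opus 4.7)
The plan is to reduce the estimate to Tao's two-dimensional radial endpoint estimate \eqref{Tao} by means of the unitary equivalence indicated in the remark after Theorem \ref{theorem_1}. First, I would introduce the standard polar identification $U_n\colon L^2_{\mathrm{rad}}(\R^n)\to L^2(\R_+,dr)$ given by $U_n f(r)=\omega_n^{1/2}r^{(n-1)/2}f(r)$, which is unitary. A direct computation on $C_0^\infty(\R_+)$ yields
$$
U_n(-\Delta)U_n^*=-\partial_r^2+\frac{(n-1)(n-3)}{4r^2},
$$
so that on the radial sector
$$
U_n H U_n^*=-\partial_r^2+\frac{(n-1)(n-3)-(n-2)^2}{4r^2}=-\partial_r^2-\frac{1}{4r^2},
$$
using the identity $(n-1)(n-3)-(n-2)^2=-1$. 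Specialising the same computation to $n=2$ gives $U_2(-\Delta_{\R^2})U_2^*=-\partial_r^2-\tfrac{1}{4r^2}$ on the radial sector. Hence $T:=U_2^*U_n\colon L^2_{\mathrm{rad}}(\R^n)\to L^2_{\mathrm{rad}}(\R^2)$ is unitary and intertwines $H$ (on radial functions) with $-\Delta_{\R^2}$ (on radial functions); therefore it intertwines the corresponding propagators:
$$
e^{-itH}P\psi=T^{-1}e^{it\Delta_{\R^2}}TP\psi,\quad \psi\in L^2(\R^n).
$$

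Second, I would invoke Tao's endpoint \eqref{Tao} applied to $\psi_0:=TP\psi\in L^2_{\mathrm{rad}}(\R^2)$, which gives
$$
\norm{e^{it\Delta_{\R^2}}\psi_0}_{L^2_tL^\infty_x(\R\times\R^2)}\le C\norm{\psi_0}_{L^2(\R^2)}=C\norm{P\psi}_{L^2(\R^n)}\le C\norm{\psi}_{L^2(\R^n)},
$$
since $T$ is an $L^2$-isometry and $P$ is an orthogonal projection.

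Finally, I would translate this two-dimensional $L^\infty$ bound into the $n$-dimensional weak Lorentz bound. Unwinding the definition of $T$, if $g\in L^2_{\mathrm{rad}}(\R^n)$ corresponds to $\widehat g\in L^2_{\mathrm{rad}}(\R^2)$, then
$$
g(x)=\sqrt{\omega_2/\omega_n}\,|x|^{-(n-2)/2}\,\widehat g(|x|),\quad x\in\R^n\setminus\{0\}.
$$
Since $|x|^{-(n-2)/2}\in L^{2^*,\infty}(\R^n)$ and the function $x\mapsto \widehat g(|x|)$ on $\R^n$ has $L^\infty(\R^n)$-norm equal to $\norm{\widehat g}_{L^\infty(\R^2)}$, the Hölder inequality \eqref{Holder} yields
$$
\norm{g}_{L^{2^*,\infty}(\R^n)}\le C\norm{\widehat g}_{L^\infty(\R^2)}.
$$
Applying this pointwise in $t$ to $g=e^{-itH}P\psi$ and $\widehat g=e^{it\Delta_{\R^2}}\psi_0$, then taking $L^2_t$-norms and combining with the estimate from Tao, yields \eqref{theorem_1_1}.

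The only nontrivial point is the algebraic cancellation $(n-1)(n-3)-(n-2)^2=-1$, which is precisely what makes the critical potential $-C_{\Hardy}|x|^{-2}$ turn the radial part of $-\Delta_{\R^n}$ into the radial part of $-\Delta_{\R^2}$; once this is observed, the rest of the argument is a routine unitary conjugation followed by a one-line Lorentz Hölder estimate, so no substantial obstacle is anticipated.
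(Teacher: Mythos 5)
Your proof is correct and takes essentially the same approach as the paper: the composite map $T=U_2^*U_n$ you construct is exactly the paper's unitary $U:L^2_{\mathrm{rad}}(\R^n)\to L^2_{\mathrm{rad}}(\R^2)$, $f\mapsto(\omega_n/\omega_2)^{1/2}|x|^{(n-2)/2}f$, and the chain of estimates (Tao's two-dimensional radial endpoint estimate followed by the Lorentz H\"older inequality with $|x|^{-(n-2)/2}\in L^{2^*,\infty}(\R^n)$) is identical. The only cosmetic difference is that you verify the intertwining $UHU^*=-\Delta_{\R^2}$ by passing through the half-line Bessel operator $-\partial_r^2-\tfrac{1}{4r^2}$, whereas the paper checks the same identity directly at the level of sesquilinear forms.
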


%proof
\begin{proof}
Recall that $H$ restricted to radial functions is unitarily equivalent to $-\Delta_{\R^2}$  restricted to radial functions by means of the unitary map 
\begin{align}
\label{unitary_1}
U:L^2_{\text{rad}}(\R^n)\ni f\mapsto (\omega_n/\omega_2)^{1/2}|x|^{(n-2)/2}f\in  L^2_{\text{rad}}(\R^2),
\end{align}
namely one has $$Hu=-U^*\Delta_{\R^2}Uu,\quad u\in PD(H),$$ where $U^*f=(\omega_n/\omega_2)^{-1/2}|x|^{-(n-2)/2}f$. 
This formula follows from the identity 
$$
\<-|x|^{-(n-2)/2}\Delta_{\R^2}|x|^{(n-2)/2}f,g\>=\left\langle \Big(-\frac{d^2}{dr^2}-\frac{n-1}{r}\frac{d}{dr}-\frac{(n-2)^2}{4r^2}\Big)f,g\right\rangle=Q_H(f,g)
$$ 
for all radially symmetric functions $f,g\in C_0^\infty(\R^n)$ and the fact that $C_0^\infty(\R^n)$ is dense in $D(\overline Q_H)$, where $r=|x|$.  In particular, one has 
\begin{align}
\label{proof_proposition_3_1}
e^{-itH}P\psi=U^*e^{it\Delta_{\R^2}}U P\psi.
\end{align}
%If we set $v=|x|^{(n-2)/2}e^{-itH}P\psi$ then we have $v=e^{it\Delta_{\R^2}}|x|^{(n-2)/2}P\psi$ as in the previous section. 
By virtue of this formula, % and the fact that $e^{-itH}P\psi$ is radially symmetric, 
we can use \eqref{Tao} to obtain
\begin{align*}
\norm{e^{-itH}P\psi}_{L^2(\R;L^{2^* ,\infty}(\R^n))}
%&\le C\norm{|x|^{-(n-2)/2}}_{L^{2^* ,\infty}(\R^n)}\norm{v}_{L^2(\R;L^\infty(\R^n))}\\
&\le C\norm{|x|^{-(n-2)/2}}_{L^{2^* ,\infty}(\R^n)}\norm{Ue^{-itH}P\psi}_{L^2(\R;L^\infty(\R^n))}\\
&\le C\norm{e^{it\Delta_{\R^2}}U P\psi}_{L^2(\R;L^\infty(\R^2))}\\
&\le C\norm{U P\psi}_{L^2(\R^2)}\\
&\le C\norm{\psi}_{L^2(\R^n)}
%C\sqrt{\omega_2/\omega_n}\norm{P\psi}_{L^2(\R^n)},
\end{align*}
and the assertion follows. %, where in the first line we have used the fact $|x|^{-(n-2)/2}:L^{2^* ,\infty}(\R^n)\to L^\infty(\R^n)$ which follows from H\"older's inequality \eqref{Holder}. 
\end{proof}

It remains to deal with the non-radial part for which one can consider more general potentials:

%assumption
\begin{assumption}
\label{assumption_1}
$V\in L^1_{\mathrm{loc}}(\R^n)$ is a real-valued radial function satisfying the following. 
\begin{itemize}
\item[(1)] $|x|^2(x\cdot\nabla)^\ell V\in L^\infty(\R^n)$ for $\ell=0,1$. 
\item[(2)] There exists a constant $\nu>0$ such that 
$$
|x|^2V(x)\ge -(n-1)^2/4+\nu,\quad 
-|x|^2(V(x)+x\cdot \nabla V(x))\ge -(n-1)^2/4+\nu. $$
\item[(3)] $-\Delta+V$ is lower semi-bounded in the sense that, with some constant $C>0$, 
$$
\<(-\Delta+V)u,u\>\ge -C\norm{u}_{L^2}^2,\quad  u\in C_0^\infty(\R^n). 
$$
\end{itemize}
\end{assumption}
Under these conditions, %by Hardy's inequality \eqref{Hardy}, 
the sesquilinear form $\<(-\Delta+V)u,v\>$ is symmetric, lower semi-bounded and closable on $C_0^\infty(\R^n)$. Let $H_V=-\Delta+V$ be a unique self-adjoint operator corresponding to the closure of this form. It is not hard to check that $V(x)=-C_{\Hardy}|x|^{-2}$ fulfills the above conditions. In particular, the following proposition particularly implies \eqref{theorem_1_2} and \eqref{theorem_1_3}.

%proposition
\begin{proposition}	
\label{proposition_4}
Let $V$ satisfy Assumption \ref{assumption_1}. Then one has
\begin{align}
\label{proposition_4_1}
\norm{e^{-itH_V }P^\perp \psi}_{L^2(\R;L^{2^* ,2}(\R^n))}
&\le C\norm{\psi}_{L^2(\R^n)},\\
\label{proposition_4_2}
\bignorm{\int_0^te^{-i(t-s)H_V}P^\perp F(s)ds}_{L^2(\R;L^{2^* ,2}(\R^n))}
&\le C\norm{F}_{L^2(\R;L^{2_*,2}(\R^n))}.
\end{align}
\end{proposition}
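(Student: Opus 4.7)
The plan is to follow the smooth-perturbation strategy developed in \cite{RoSc,BPST2,BoMi}, reducing the Strichartz estimates for $e^{-itH_V}P^\perp$ to the uniform weighted resolvent bound
\begin{equation*}
\sup_{z\in\C\setminus\R}\norm{|x|^{-1}P^\perp(H_V-z)^{-1}P^\perp|x|^{-1}}_{L^2\to L^2}<\infty,
\end{equation*}
which I would establish separately (the paper defers this to Appendix \ref{appendix_A}). The new point on the non-radial sector is the improved Hardy inequality
\begin{equation*}
\frac{n^2}{4}\int |x|^{-2}|P^\perp u|^2\,dx\le \int|\nabla P^\perp u|^2\,dx,
\end{equation*}
which I would derive by spherical-harmonic decomposition: on the $k$-th sector ($k\ge1$) the radial Hardy constant $(n-2)^2/4$ combines with the angular eigenvalue $k(k+n-2)\ge n-1$ to yield $(n-2)^2/4+(n-1)=n^2/4$. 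Combined with Assumption \ref{assumption_1}(2), which gives $V(x)\ge(-(n-1)^2/4+\nu)|x|^{-2}$, a short absorption argument on $\Ran(P^\perp)$ furnishes the form equivalence $\<H_Vu,u\>\sim\norm{\nabla u}_{L^2}^2$, and hence the norm equivalence $\norm{H_V^{1/2}P^\perp u}_{L^2}\sim\norm{\nabla P^\perp u}_{L^2}$. This is what allows the subcritical methods to be imported.

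Granting the uniform resolvent bound, Kato's theory of smooth perturbations yields the homogeneous smoothing estimate $\norm{|x|^{-1}e^{-itH_V}P^\perp\psi}_{L^2_{t,x}}\le C\norm{\psi}_{L^2}$ together with its dual and, via $TT^*$, the associated inhomogeneous variant. To transfer to Strichartz I would use Duhamel,
\begin{equation*}
e^{-itH_V}P^\perp\psi=e^{it\Delta}P^\perp\psi-i\int_0^t e^{i(t-s)\Delta}Ve^{-isH_V}P^\perp\psi\,ds,
\end{equation*}
and invoke the free endpoint estimates of Keel--Tao in Lorentz form. For the Duhamel term, Assumption \ref{assumption_1}(1) gives $|x|V\in L^{\infty}\cdot|x|^{-1}\subset L^{n,\infty}$, so by \eqref{Holder}
\begin{equation*}
\norm{V e^{-isH_V}P^\perp\psi}_{L^2_s L^{2_*,2}_x}\le C\norm{|x|V}_{L^{n,\infty}}\norm{|x|^{-1}e^{-isH_V}P^\perp\psi}_{L^2_{s,x}}\le C\norm{\psi}_{L^2},
\end{equation*}
which combined with the free inhomogeneous endpoint estimate closes \eqref{proposition_4_1}. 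The inhomogeneous estimate \eqref{proposition_4_2} would follow along the same lines, pairing the dual smoothing estimate with the homogeneous one and using the $TT^*$ representation of the Duhamel operator.

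The hardest part is the uniform weighted resolvent estimate itself: uniformity up to and across the threshold $z=0$ and in the high-energy regime is delicate, and is precisely the feature that separates the non-radial from the radial sector. I would attack it by a Mourre-type argument with the generator of dilations $A=-i(x\cdot\nabla+\nabla\cdot x)/2$, where Assumption \ref{assumption_1}(2) on $-(V+x\cdot\nabla V)$ supplies the positive commutator bound and the improved Hardy inequality excludes a threshold resonance on $\Ran(P^\perp)$, while the $L^\infty$ bound on $|x|^2V$ controls remainders. Radial functions, by contrast, sit at the Hardy borderline and admit no such resonance-free resolvent bound, which is exactly why the endpoint behavior splits into the two regimes of Theorem \ref{theorem_1}.
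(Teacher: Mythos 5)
Your strategy is the same as the paper's: reduce to the uniform weighted resolvent bound, use Kato smoothness to obtain the weighted $L^2$ estimates, and then transfer to Strichartz via Duhamel combined with Keel--Tao's Lorentz-space endpoint estimates for the free flow and H\"older with $|x|^2V\in L^\infty$. Your spherical-harmonic derivation of the improved Hardy constant $n^2/4$ is correct ($(n-2)^2/4 + (n-1) = n^2/4$) and substitutes for the paper's citation to Ekholm--Frank.

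However, the treatment of \eqref{proposition_4_2} is the one place where your sketch is not adequate. A $TT^*$ argument applied to the homogeneous estimate and its dual produces the \emph{untruncated} operator $\int_\R e^{-i(t-s)H_V}P^\perp F(s)\,ds$, not the retarded operator $\int_0^t$. Normally one passes from the former to the latter by the Christ--Kiselev lemma, but Christ--Kiselev requires $p > \tilde p'$ and therefore fails at the double endpoint $p=\tilde p'=2$, which is exactly the case at hand. The paper sidesteps this by iterating Duhamel's identity once more to obtain
\begin{equation*}
\Gamma F = \Gamma_0 F - i\,\Gamma_0 V\Gamma_0 F - \Gamma_0 V\,\Gamma\, V\Gamma_0 F,
\end{equation*}
where $\Gamma_0$ and $\Gamma$ are the free and perturbed retarded Duhamel operators. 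The first two terms are handled by the free inhomogeneous endpoint estimate and by the weighted free estimates $\norm{|V|^{1/2}\Gamma_0 F}_{L^2_tL^2_x}\lesssim\norm{F}_{L^2_tL^{2_*,2}_x}$, $\norm{\Gamma_0|V|^{1/2}G}_{L^2_tL^{2^*,2}_x}\lesssim\norm{G}_{L^2_tL^2_x}$, and the last term is exactly where the double-smoothing bound $\norm{|V|^{1/2}\Gamma|V|^{1/2}P^\perp}_{L^2_tL^2_x\to L^2_tL^2_x}<\infty$ from Kato theory is plugged in, flanked on both sides by $\Gamma_0$ and the weights. Your outline needs to be revised at this step: replace the $TT^*$ appeal with the iterated Duhamel expansion, since that is precisely the device that makes the inhomogeneous endpoint estimate accessible.
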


The key ingredient in the proof of this proposition is the following weighted $L^2$ estimate:

%proposition
\begin{proposition}
\label{proposition_5}
Let $V$ be as above. Then one has
\begin{align}
\label{proposition_5_00}
\norm{|V|^{1/2}e^{-itH_V}P^\perp \psi}_{L^2(\R^{1+n})}&\le C\norm{\psi}_{L^2(\R^n)},\\
\label{proposition_5_0}
\bignorm{|V|^{1/2}\int_0^t e^{-i(t-s)H_V}|V|^{1/2}P^\perp F(s)ds}_{L^2(\R^{1+n})}&\le C\norm{F}_{L^2(\R^{1+n})}. 
\end{align}
\end{proposition}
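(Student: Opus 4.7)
My plan is to deduce both estimates from the classical smooth perturbation theory of Kato. The relevant abstract principle, in the form used in \cite{RoSc,BPST2,BoMi}, is that for a self-adjoint operator $A$ and a closed densely defined operator $B$, the uniform resolvent bound
\begin{equation*}
M := \sup_{z \in \C\setminus\R} \|B(A-z)^{-1}B^*\|_{L^2\to L^2} < \infty
\end{equation*}
automatically yields the two space-time estimates
\begin{equation*}
\|Be^{-itA}\psi\|_{L^2_{t,x}} \le CM^{1/2}\|\psi\|_{L^2}, \qquad \bignorm{B\int_0^t e^{-i(t-s)A}B^*F(s)\,ds}_{L^2_{t,x}} \le CM\|F\|_{L^2_{t,x}}.
\end{equation*}
Taking $A = H_V$ and $B = |V|^{1/2}P^\perp$, which coincides with its adjoint since $V$ is real-valued and radial (so that $M_{|V|^{1/2}}$ commutes with the spherical averaging $P$ and hence with $P^\perp$), both \eqref{proposition_5_00} and \eqref{proposition_5_0} are reduced to the single uniform bound
\begin{equation*}
\sup_{z\in \C\setminus \R}\bigl\||V|^{1/2}P^\perp(H_V-z)^{-1}P^\perp|V|^{1/2}\bigr\|_{L^2\to L^2} < \infty. \tag{$\ast$}
\end{equation*}

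The next step is to replace the weight $|V|^{1/2}$ by the cleaner weight $|x|^{-1}$. Assumption \ref{assumption_1}(1) with $\ell = 0$ gives $|V(x)| \le C|x|^{-2}$, so the function $f(x) := |x|\,|V|^{1/2}(x)$ is a bounded radial function on $\R^n$. Radiality forces $M_f$ to commute with $P^\perp$, and one obtains
\begin{equation*}
|V|^{1/2}P^\perp(H_V-z)^{-1}P^\perp|V|^{1/2} = M_f\bigl(|x|^{-1}P^\perp(H_V-z)^{-1}P^\perp|x|^{-1}\bigr)M_f.
\end{equation*}
Consequently $(\ast)$ follows from the weighted uniform resolvent estimate
\begin{equation*}
\sup_{z\in \C\setminus \R}\bigl\||x|^{-1}P^\perp(H_V-z)^{-1}P^\perp|x|^{-1}\bigr\|_{L^2\to L^2} < \infty,
\end{equation*}
which is precisely the statement whose proof is deferred to Appendix \ref{appendix_A}.

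The genuine difficulty therefore lies in that weighted resolvent bound, and the reduction above is exactly what isolates it. I expect the proof in Appendix \ref{appendix_A} to rest on two ingredients. The first is the improved Hardy inequality $\frac{n^2}{4}\int |x|^{-2}|P^\perp u|^2\,dx \le \int|\nabla u|^2\,dx$ (available because $P^\perp u$ has vanishing spherical mean), which, together with the lower bound $|x|^2 V \ge -(n-1)^2/4 + \nu$ in Assumption \ref{assumption_1}(2), yields the equivalence $\|H_V^{1/2}P^\perp u\|_{L^2} \sim \|\nabla P^\perp u\|_{L^2}$ and hence places us in a setting close to the subcritical one. The second is a positive-commutator (Mourre-type) argument with the generator of dilations $\tfrac{1}{2}(x\cdot\nabla + \nabla\cdot x)$, for which the second half of Assumption \ref{assumption_1}(2) — $-|x|^2(V + x\cdot\nabla V) \ge -(n-1)^2/4 + \nu$ — supplies exactly the repulsivity needed for uniform-in-$z$ control. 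Modulo Appendix \ref{appendix_A}, the proof of Proposition \ref{proposition_5} is a direct application of the abstract smoothing principle recalled at the start.
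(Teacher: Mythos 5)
Your argument is correct and matches the paper's own proof: both reduce the two estimates to the uniform weighted resolvent bound via Kato's smooth perturbation theory, and both use the radiality of $V$ together with $|x|^2V\in L^\infty$ to pass between the weights $|V|^{1/2}$ and $|x|^{-1}$ (the paper swaps $|V|^{1/2}$ for $|x|^{-1}$ before applying the smoothing principle, you do it afterward — an immaterial reordering). Your speculation about Appendix~\ref{appendix_A} is also essentially on target: the appendix does rely on the improved Hardy inequality and a multiplier argument with $iA = x\cdot\nabla+\nabla\cdot x$ driven by Assumption~\ref{assumption_1}(2), though phrased as a Morrey--Campanato-type energy identity rather than a Mourre estimate.
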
 

%proof
\begin{proof}
Note that, since $V$ is radially symmetric, $P^\perp$ commutes with all of $|x|^{-1}$, $|V|^{1/2}$ and $e^{-it H_V}$. Furthermore, since $P^\perp=(P^\perp)^2$ and $|x|^2V\in L^\infty$, it suffices to show that
\begin{align}
\label{proposition_5_11}
\norm{|x|^{-1} e^{-itH_V} P^\perp \psi}_{L^2(\R^{1+n})}&\le C\norm{\psi}_{L^2(\R^n)},\\
\label{proposition_5_1}
\bignorm{|x|^{-1}P^\perp\int_0^t e^{-i(t-s)H_V}P^\perp |x|^{-1} F(s)ds}_{L^2(\R^{1+n})}&\le C\norm{F}_{L^2(\R^{1+n})}. 
\end{align}
%We can also assume that $F(t):\R\to \S(\R^n)$ is a simple  function by a density argument. 
Then the smooth perturbation method by Kato \cite{Kat} (see also \cite[Theorem 2.3]{Dan}) allows us to deduce both \eqref{proposition_5_11} and \eqref{proposition_5_1} from the uniform resolvent estimate:
\begin{align}
\label{proposition_5_2}
\sup_{z\in \C\setminus\R}\norm{|x|^{-1}P^\perp(H_V-z)^{-1}P^\perp|x|^{-1}}_{L^2(\R^n)\to L^2(\R^n)}<\infty
\end{align}
whose proof is rather technical and postponed to Appendix \ref{appendix_A}. 
\end{proof}

%proof
\begin{proof}[Proof of Proposition \ref{proposition_4}]
We follow closely the argument in \cite{BPST2} (see also \cite{BoMi}). Let us set $$
\Gamma_0 F(t)=\int_0^te^{i(t-s)\Delta}F(s)ds,\quad
\Gamma F(t)=\int_0^t e^{-i(t-s)H_V}F(s)ds.
$$ 
It was proved by \cite[Theorem 10.1]{KeTa} that $e^{it\Delta}$ and $\Gamma_0$ satisfy 
\begin{align}
\label{proof_1}
\norm{e^{it\Delta}\psi}_{L^2_tL^{2^* ,2}_x}\le C\norm{\psi}_{L^2_x},\quad
\norm{\Gamma_0F}_{L^2_tL^{2^* ,2}_x}\le C\norm{F}_{L^2_tL^{2_*,2}_x},
%\norm{\Gamma_0^*F}_{L^2_tL^{2^* ,2}_x}\le C\norm{F}_{L^2_tL^{2_*,2}_x}
\end{align}
%where $\Gamma^*_0$ is the dual of $\Gamma_0$ with respect to the duality pairing  $L^2_tL^2_x\times L^2_tL^2_x$ induced by $(F,G)\mapsto \int\<F(t),G(t)\>dt$. The first estimate in \eqref{proof_1} was proved by in an abstract setting and the second estimate follows from the first one and the duality. 
which, together with the hypothesis $|x|^2V\in L^\infty$ and H\"older's inequality, implies
\begin{equation}
\begin{aligned}
\label{proof_2}
\norm{|V|^{1/2}\Gamma_0F}_{L^2_tL^2_x}\le C\norm{F}_{L^2_tL^{2_*,2}_x},\quad
\norm{\Gamma_0|V|^{1/2} G}_{L^2_tL^{2^* ,2}_x}
%\le C\norm{|x|^{-1}}_{L^{n,\infty}}\norm{\Gamma_0F}_{L^2_tL^{2^* ,2}_x}
\le C\norm{G}_{L^2_tL^{2}_x}.
%&\norm{Y^*\Gamma_0Y^*Y}_{L^2_tL^{2^* ,2}_x}+\norm{\Gamma_0Z^*G}_{L^2_tL^{2^* ,2}_x}\le C\norm{|x|^{-1}G}_{L^2_tL^{2_*,2}_x}\le C\norm{G}_{L^2_tL^{2}_x}.
\end{aligned}
\end{equation}
Let $a_1,a_2,b_1,b_2,c_1$ and $c_2$ be (finite) positive numbers defined by
\begin{align*}
&a_1:=\norm{e^{it\Delta}}_{L^2_x\to L^2_tL^{2^* ,2}_x},\quad
a_2:=\norm{\Gamma_0}_{L^2_tL^{2_*,2}_x\to L^2_tL^{2^* ,2}_x},\\
&b_1:=\norm{\Gamma_0|V|^{1/2}}_{L^2_tL^{2}_x\to L^2_tL^{2^* ,2}_x},\quad
b_2:=\norm{|V|^{1/2}\Gamma_0}_{L^2_tL^{2_*,2}_x\to L^2_tL^{2}_x},\\
&c_1:=\norm{|V|^{1/2}e^{-itH_V}P^\perp }_{L^2_tL^2_x\to L^2_tL^2_x},\quad
c_2:=\norm{|V|^{1/2}\Gamma  |V|^{1/2}P^\perp}_{L^2_tL^2_x\to L^2_tL^2_x}. 
\end{align*}

Now we shall show the homogeneous estimate \eqref{proposition_4_1}. By virtue of Duhamel's identity
\begin{align}
\label{Duhamel_0}
e^{-itH_V}\psi=e^{it\Delta}\psi-i\int_0^{t}e^{i(t-r)\Delta} Ve^{-irH_V}\psi dr,
\end{align}
we learn by \eqref{proof_1}, \eqref{proof_2} and \eqref{proposition_5_00} that
$$
\norm{e^{-itH_V}P^\perp \psi}_{L^2_tL^{2^* ,2}_x}\le (a_1+b_1c_1)\norm{\psi}_{L^2_x}.
$$

In order to derive the inhomogeneous estimate \eqref{proposition_4_2}, we next observe that, for any simple function $F:\R\to C_0^\infty(\R^n)$, $\Gamma$ satisfies following iterated Duhamel's identity
\begin{align}
\label{Duhamel_1}
\Gamma F(t)=\Gamma_0 F(t)-i\Gamma_0V\Gamma_0 F(t)-\Gamma_0V\Gamma V\Gamma_0 F(t)
\end{align}
Indeed, integrating usual Duhamel's identity $$e^{-i(t-s)H_V}F(s)=e^{i(t-s)\Delta}F(s)-i\int_0^{t-s}e^{i(t-s-r)\Delta} Ve^{-irH_V}F(s)dr$$ over $s\in[0,t]$ (or $s\in[t,0]$) and applying Fubini's theorem to the second term, we obtain
$$
\Gamma F(t)=\Gamma_0F(t)-i\Gamma_0V\Gamma F(t).
$$
By exchanging the roles of $-\Delta$ and $H_V$, we also have
$$
\Gamma_0 F(t)=\Gamma F(t)-i\Gamma (-V)\Gamma_0 F(t)=\Gamma F(t)+i\Gamma V\Gamma_0 F(t).
$$
These two identities imply \eqref{Duhamel_1}. Taking into account the fact that $P^\perp$ commutes with all of $V,\Gamma_0$ and $\Gamma$, we use \eqref{Duhamel_1}, \eqref{proof_1}, \eqref{proof_2} and \eqref{proposition_5_0} to obtain 
\begin{align*}
\norm{\Gamma P^\perp F}_{L^2_tL^{2^* ,2}_x}\le (a_2+b_1b_2+b_1b_2c_2)\norm{F}_{L^2_tL^{2_*,2}_x},
\end{align*}
which completes the proof since simple functions with values in $C_0^\infty(\R^n)$ are dense in $L^2_tL^{2_*,2}_x$. 
\end{proof}

%proof\begin{proof}[Proof of Theorem \ref{theorem_1}]The result follows from Proposition \ref{proposition_3} and \eqref{proposition_4_1} since $L^{2^* ,2}\subset L^{2^* ,\infty}$.\end{proof}

Next we consider non-endpoint estimates and weighted $L^2$-estimates:

%proof
\begin{proof}[Proof of Corollary \ref{corollary_2}]
We first recall the following properties of real interpolation spaces:
\begin{itemize}
\item \cite[Theorem (I.I) in Chapter VII]{LiPe}. For $1\le p_j\le \infty$, $1\le q_j<\infty$, $1\le s_j\le \infty$, $0<\theta<1$, $1/p=(1-\theta)/p_0+\theta/p_1$ and $1/q=(1-\theta)/q_0+\theta/q_1$, one has 
$$\big(L^{p_0}_tL^{q_0,s_0}_x,L^{p_1}_tL^{q_1,s_1}_x\big)_{\theta,p}=L^p_tL^{q,p}_x.$$ 
%Note that if $(p_0,q_0)$ and $(p_1,q_1)$ satisfy \eqref{admissible} then so does $(p,q)$. 
\item \cite[Theorem 3.1.2]{BeLo}. Let $\A$ be a Banach space and $(\B_0,\B_1)$ a Banach couple and $\A$. If $T\in \mathbb B(\A,\B_0)\cap \mathbb B(\A,\B_1)$ with $\norm{T}_{\A\to \B_j}\le M_j$ then, for any $\theta\in(0,1)$ and $\sigma\in[1,\infty]$, $T$ is also bounded from $\A$ to $(\B_0,\B_1)_{\theta,\sigma}$ with $\norm{T}_{\A\to (\B_0,\B_1)_{\theta,\sigma}}\le M_0^{1-\theta}M_1^\theta$. 
%\begin{align*}\norm{Tf}_{\B_j}&\le M_j\norm{f}_{\A},\quad f\in \A,\ j=0,1,\end{align*}then $T$ is bounded from $\A$ to $(\B_0,\B_1)_{\theta,p}$ for all $0<\theta<1$ and $1\le p\le \infty$ and satisfies$$\norm{Tf}_{(\B_0,\B_1)_{\theta,p}}\le M_0^{1-\theta}M_1^\theta\norm{f}_{\A},\quad f\in \A.$$In particular, $\norm{T}_{\mathbb B(\A,(\B_0,\B_1)_{\theta,p})}$ is uniformly bounded in $p$. 
\end{itemize}
Then we use \eqref{theorem_1_1} and these properties with $\A=L^2_x$, $\B_0=L^2_tL^{2^* ,\infty}_x$ and $\B_1=L^\infty_tL^2_x$ to obtain 
$$
\norm{e^{-itH}\psi}_{L^p_tL^{q,p}_x}\le C\norm{\psi}_{L^2_x}.
$$
for all non-endpoint admissible pair. By virtue of the continuous embedding $L^{q,p}\subset L^{q}$ for  $q\ge p$, this estimate implies \eqref{corollary_2_1} for $2(n+2)/n\le q<2n/(n-2)$. For $2<q<2(n+2)/n$, \eqref{corollary_2_1} follows from complex interpolation between two cases $(p,q)=(\infty,2)$ and $(p,q)=(2(n+2)/n,2(n+2)/n)$. This completes the proof of \eqref{corollary_2_1}. Inhomogeneous estimates \eqref{corollary_2_2} and \eqref{corollary_2_3} follow from a standard argument (see \cite[Section 3]{Tao1}) by using \eqref{theorem_1_1},   \eqref{corollary_2_1}, dual estimates of \eqref{corollary_2_1} and Christ-Kiselev's lemma \cite{ChKi}. 
\end{proof}

%proof
\begin{proof}[Proof of Corollary \ref{corollary_3}]
The assertion follows from \eqref{theorem_1_1} and H\"older's inequality \eqref{Holder}. 
\end{proof}

%section
\section{Proof of Theorem \ref{theorem_4}}
\label{section_counterexample}
%This section is devoted to the proof of Theorem \ref{theorem_4}. 

\subsection{The homogeneous case}
We begin with an elementary fact on Lorentz spaces:

%{lemma}
\begin{lemma}
\label{lemma_2}
Let $n\ge2$, $1<\sigma<\infty$, $\alpha>0$  and set $w_0(r):=r^{-1}|\log r|^{-\alpha}\mathds 1_{\{r<1\}}(r)$. Then $w_0(|x|)$ belongs to $L^{n,\sigma}(\R^n)$ if and only if $\alpha>1/\sigma$. 
\end{lemma}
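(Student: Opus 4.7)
The plan is to evaluate the Lorentz quasinorm directly via the distribution function $d_f(t):=|\{x\in\R^n:|f(x)|>t\}|$, using the characterisation
\[
\|f\|_{L^{n,\sigma}}^\sigma\sim\int_0^\infty\bigl(t\,d_f(t)^{1/n}\bigr)^\sigma\,\frac{dt}{t},
\]
so that the question of whether $w_0(|\cdot|)\in L^{n,\sigma}(\R^n)$ reduces to an elementary integrability question involving only a logarithmic factor.

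First I would analyse the shape of $w_0(r)=r^{-1}(\log(1/r))^{-\alpha}$ on $(0,1)$. A direct differentiation shows that $w_0$ has a unique critical point at $r=e^{-\alpha}$ and is strictly decreasing on $(0,e^{-\alpha})$. For every $t$ larger than some fixed threshold, the super-level set $\{x:w_0(|x|)>t\}$ therefore lies inside a ball $\{|x|<r_0(t)\}$, where $r_0(t)$ is determined implicitly by $r_0\bigl(\log(1/r_0)\bigr)^\alpha=1/t$. Taking logarithms gives $\log(1/r_0)=\log t+\alpha\log\log(1/r_0)$, and a short bootstrap yields $\log(1/r_0)\sim\log t$, hence
\[
r_0(t)\sim t^{-1}(\log t)^{-\alpha},\qquad d_{w_0}(t)\sim c_n\,r_0(t)^n\sim t^{-n}(\log t)^{-n\alpha}\quad(t\to\infty).
\]

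Substituting this asymptotic into the Lorentz-norm formula, the integrand near $t=\infty$ is comparable to $(\log t)^{-\alpha\sigma}\,dt/t$; the change of variables $s=\log t$ converts the convergence question to that of $\int^{\infty}s^{-\alpha\sigma}\,ds$, which converges if and only if $\alpha\sigma>1$, i.e.\ $\alpha>1/\sigma$. Matching two-sided bounds on $d_{w_0}(t)$ then yield both implications simultaneously. The small-$t$ part of the integral is harmless: there $d_{w_0}(t)$ is bounded by the measure of the unit ball, so the contribution is controlled by $\int_0^{t_0}t^{\sigma-1}\,dt<\infty$ for any $\sigma>1$.

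The main technical point is the asymptotic inversion giving $r_0(t)\sim t^{-1}(\log t)^{-\alpha}$, which requires the bootstrap to be carried out with sufficient precision to furnish both an upper and a lower bound on $d_{w_0}(t)$ (the former needed for sufficiency, the latter for necessity). Beyond that, everything is a routine change-of-variables calculation and the role of $n$ enters only through the factor $r_0(t)^n$, whose logarithmic subfactor $(\log t)^{-n\alpha}$ is exactly cancelled after raising $t\,d_{w_0}(t)^{1/n}$ to the power $\sigma$ and integrating, leaving the clean logarithmic integral that dictates the threshold $\alpha>1/\sigma$.
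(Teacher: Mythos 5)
Your route is genuinely different from the paper's. The paper performs the substitution $t=r^n$ inside the distribution function, reducing the question to whether the one-dimensional function $\widetilde w_0(t)=t^{-1/n}|\log t|^{-\alpha}\mathds 1_{(0,1)}(t)$ lies in $L^{n,\sigma}(\R)$, and then invokes a worked exercise in Grafakos for this standard log-refined power. You instead extract the large-$t$ asymptotics of $d_{w_0}(t)$ directly by inverting $w_0(r_0)=t$ near $r=0$, obtaining $r_0(t)\sim t^{-1}(\log t)^{-\alpha}$ and hence $d_{w_0}(t)\sim t^{-n}(\log t)^{-n\alpha}$, and you then evaluate the Lorentz quasinorm integral yourself. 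Your argument is more self-contained (no external citation) and cleanly displays why the dimension $n$ drops out, while the paper's reduction is shorter at the cost of outsourcing the one-dimensional computation. Both need two-sided bounds on $d_{w_0}$, as you correctly flag.

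There is, however, one step that is false as written: on $(0,1)$ the function $w_0(r)=r^{-1}|\log r|^{-\alpha}$ blows up not only as $r\to 0$ but also as $r\to 1^-$ (where $|\log r|\to 0$), so for large $t$ the super-level set $\{w_0(|x|)>t\}$ is \emph{not} contained in a ball $\{|x|<r_0(t)\}$; it also has a thin annular component near $|x|=1$ of thickness $\sim t^{-1/\alpha}$. That piece contributes $\int^\infty t^{\sigma(1-1/(n\alpha))}\,dt/t$ to the quasinorm, which diverges whenever $\alpha\ge 1/n$, so with the cutoff $\mathds 1_{\{r<1\}}$ taken literally the ``if'' direction of the lemma would need the extra restriction $\alpha<1/n$. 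The intended reading, matching the Grafakos exercise the paper cites, truncates at some $r<c<1$ (e.g.\ $r<1/e$, so that $|\log r|\ge 1$ on the support) and thereby removes the $r\to 1$ singularity; the paper's own proof implicitly requires the same normalization. Once you adopt that cutoff, your treatment of the $r\to 0$ regime — the bootstrap for $r_0(t)$, the two-sided bound on $d_{w_0}$, the change of variables $s=\log t$, and the resulting threshold $\alpha\sigma>1$ — is correct and complete.
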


%proof
\begin{proof}
%Let $d_w(s)=\mu(\{x\in\R^n\ |\ |w(x)|>s\})$ be the distribution function of $w$. Then$$\norm{w}_{L^{p,\sigma}(\R^n)}\sim \norm{sd_w(s)^{1/p}}_{L^{\sigma}(\R_+,s^{-1}ds)}.$$for any $p>1$. 
The distribution function $d_w(s)$ of $w$ satisfies
\begin{align*}
d_{w_0}(s)
&\sim \int_0^\infty \mathds1_{\{r^{-1}|\log r|^{-\alpha}\mathds1_{(0,1)}(r)>s\}}r^{n-1}dr\\
&=n\int_0^\infty\mathds1_{\{t^{-1/n}|\log t|^{-\alpha}n^{-\alpha}\mathds1_{(0,1)}(t)>s\}}dt.
\end{align*}
Thus $w_0(|x|)\in L^{n,\sigma}(\R^n)$ if and only if $\wtilde w_0:=r^{-1/n}|\log r|^{-\alpha}\mathds1_{(0,1)}(r)$ belongs to $L^{n,\sigma}(\R)$. Then it is known (see \cite[1.4.8 in pages 67]{Gra}) that $\wtilde w_0\in  L^{n,\sigma}(\R)$ if and only if $\sigma>1/\alpha$, {\it i.e.}, $\alpha>1/\sigma$.
\end{proof}

The following lemma provides a counterexample of space-time weighted $L^2$-estimates for $e^{it\Delta}$ in two dimensions. %, which plays an essential role in the proof of the first part of Theorem \ref{theorem_4}. 
%{lemma}
\begin{lemma}
\label{lemma_3_0}
Let $\alpha\le1/2$ and $\psi\in L^2_{\mathrm{rad}}(\R^2)$ satisfy $\norm{\widehat \psi}_{L^2(\{|x|<1\})}>0$.
Then $w_0(|x|) e^{it\Delta_{\R^2}}\psi\notin L^2(\R^{1+2})$. %$|x|^{-1}|\log |x||^{-\alpha}\chi_{\{|x|<1\}}(x)e^{it\Delta}\psi\notin L^2(\R^{1+2})$, 
In particular, the estimate
\begin{align*}
%\label{lemma_3_1}
\norm{w_0(|x|)e^{it\Delta_{\R^2}}\psi}_{L^2(\R^{1+2})}\le C\norm{\psi}_{L^2(\R^2)}
\end{align*}
does not hold true. Here $\widehat\psi$ denotes the Fourier transform of $\psi$. 
\end{lemma}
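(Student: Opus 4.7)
The plan is to compute $\int_{\R}|e^{it\Delta_{\R^2}}\psi(x)|^2\,dt$ pointwise in $x$ via Plancherel in the time variable, and then to observe that this quantity is bounded below by a positive constant on a neighborhood of the origin; integrating against $w_0(|x|)^2$ in the spatial variable then produces a divergent integral exactly when $\alpha\le 1/2$. More precisely, I would first use radial symmetry to write $\widehat\psi(\xi)=g(|\xi|)$ and express the free Schr\"odinger propagator via the order-$0$ Hankel transform,
$$
e^{it\Delta_{\R^2}}\psi(x)=\frac{1}{2\pi}\int_0^\infty g(\rho)J_0(|x|\rho)\,e^{-it\rho^2}\,\rho\,d\rho,
$$
where $J_0$ is the Bessel function of the first kind of order $0$. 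After the change of variable $\tau=\rho^2$ this is an inverse Fourier transform in $t$, so Plancherel's theorem yields, for each fixed $x$,
$$
\int_\R|e^{it\Delta_{\R^2}}\psi(x)|^2\,dt=c\int_0^\infty |g(\rho)|^2 J_0(|x|\rho)^2\,\rho\,d\rho
$$
for some positive absolute constant $c$.

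Continuity of $J_0$ at $0$ with $J_0(0)=1$ furnishes some $s_0\in(0,1)$ for which $J_0(s)^2\ge 1/4$ on $[0,s_0]$, and the hypothesis $\norm{\widehat\psi}_{L^2(\{|\xi|<1\})}>0$ forces $c_0:=\int_0^1 |g(\rho)|^2\rho\,d\rho>0$ by radial symmetry. For $|x|\le s_0$ one has $|x|\rho\le s_0$ on $[0,1]$, and restricting the $\rho$-integration to this interval yields the pointwise lower bound
$$
\int_\R|e^{it\Delta_{\R^2}}\psi(x)|^2\,dt\ge \frac{c\,c_0}{4},\qquad |x|\le s_0.
$$
Switching to polar coordinates in $x$, I would then conclude
$$
\norm{w_0(|x|)e^{it\Delta_{\R^2}}\psi}_{L^2(\R^{1+2})}^2\gtrsim \int_0^{s_0}r^{-1}|\log r|^{-2\alpha}\,dr,
$$
and the last integral diverges if and only if $2\alpha\le 1$. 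Hence $w_0(|x|)e^{it\Delta_{\R^2}}\psi\notin L^2(\R^{1+2})$ for the given $\psi$, and \emph{a fortiori} no estimate of the stated form can hold.

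No serious obstacle is expected: the Hankel-transform representation of the radial free propagator and its time-frequency Plancherel identity are both standard. The one delicate point is to work simultaneously in the low-frequency window $\rho\in[0,1]$ (where the hypothesis on $\widehat\psi$ is usable) and in the small-space window $|x|\le s_0$ (where $J_0(|x|\rho)$ stays uniformly bounded below), so that the borderline divergence of $\int_0 r^{-1}|\log r|^{-2\alpha}dr$ at $\alpha=1/2$ is captured sharply.
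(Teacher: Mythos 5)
Your argument is correct and follows essentially the same route as the paper: write the radial free propagator as a Hankel ($J_0$) transform, apply Plancherel in time, bound $J_0^2$ from below on a small interval, and extract the divergent integral $\int_0 r^{-1}|\log r|^{-2\alpha}\,dr$. The only cosmetic difference is that the paper truncates the weight to $w_\delta$ and lets $\delta\to0$, while you integrate directly and observe divergence; these are the same computation organized slightly differently.
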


%proof
\begin{proof}
We may assume $\alpha=1/2$ without loss of generality. Set $w_\delta(r)=r^{-1}|\log r|^{-1/2}\mathds1_{[\delta,1-\delta)}(r)$ for $\delta\ge0$. In order to derive the assertion, it suffices to show that $w_\delta(|x|) e^{it\Delta_{\R^2}}\psi\in L^2(\R^{1+2})$ for each $\delta>0$ and $\norm{w_\delta(|x|) e^{it\Delta_{\R^2}}\psi}_{L^2(\R^{1+2})}\to \infty$ as $\delta\to0$. 

Taking into account the fact that, for any radial $\varphi\in L^2(\R^2)$, both $\widehat\varphi$ and $e^{it\Delta_{\R^2}}\varphi$ are also radial, %Pick a radial function $\psi\in \S(\R^2)$ and . 
one can write $f_\psi(|\xi|)=\widehat\psi(\xi)$ for some $f_\psi\in L^2(\R_+,rdr)$. Since $\norm{f_\psi}_{L^2([0,1],sds)}>0$ by assumption, we may assume $\norm{f_\psi}_{L^2([0,1],sds)}=1$. % such that $\norm{f_\psi}_{L^2((0,1),rdr)}>0$. %We also set$w_\delta(r)=r^{-1}|\log r|^{-1/2}\chi_{(\delta,1-\delta)}(r)$ for $\delta>0$. %To derive the assertion, it suffices to show that $w_\delta(|x|) e^{it\Delta}\psi\in L^2$ for each $\delta>0$, but $\norm{w_\delta(|x|) e^{it\Delta}\psi}_{L^2}\to +\infty$ as $\delta\to0$. 
Then a direct computation yields
\begin{align*}
e^{it\Delta_{\R^2}}\psi(x)
&=\frac{1}{2\pi}\int e^{ix\cdot\xi}e^{-it|\xi|^2}f_\psi(|\xi|)d\xi\\
&=\frac12\int_0^\infty \Big(\frac{1}{2\pi}\int_0^{2\pi}e^{i\rho^{1/2}|x|\cos\theta}d\theta\Big) f_\psi(\rho^{1/2})e^{-it\rho}d\rho,
\end{align*}
where we have made the change of variables $\rho=|\xi|^2$ in the second line. We set
$$
J_0(z):=\frac{1}{2\pi}\int_0^{2\pi}e^{iz\cos\theta}d\theta,\quad
h(\rho,x):=J_0(\rho^{1/2}|x|)w_\delta(|x|)f_\psi(\rho^{1/2})
$$
for $\rho\ge0$ and $h(\rho,x)=0$ for $\rho<0$. Then Plancherel's theorem shows 
\begin{align}
\nonumber
\norm{w_\delta e^{it\Delta_{\R^2}}\psi}_{L^2_tL^2_x}^2
&=\pi\norm{h(\cdot,x)}_{L^2(\R_\rho;L^2(\R_+,rdr))}^2\\
\nonumber
&=\pi\int_0^\infty\int_0^\infty J_0(\rho^{1/2}r)^2w_\delta (r)^2|f_\psi(\rho^{1/2})|^2rdr d\rho\\
%\nonumber&=2\pi\int_0^\infty\Big(\int_0^\infty J_0(sr)^2w_\delta(r)^2rdr\Big) |f_\psi(s)|^2s ds\\
\label{proof_lemma_3_1}&\ge
2\pi\inf_{s\in[0,1]}\int_0^\infty J_0(sr)^2w_\delta(r)^2rdr.
\end{align}
Consider the integral 
\begin{align*}
K(s):=\int_0^\infty J_0(sr)^2w_\delta (r)^2rdr=\int_\delta^{1-\delta} J_0(sr)^2r^{-1}|\log r|^{-1}dr. 
\end{align*}
%It suffices to show that $\inf\limits_{s\in(0,1]}K(s)=\infty$ as long as $\alpha<1/2$ since the above computation yields$$\norm{we^{it\Delta}\psi}_{L^2_tL^2_x}^2\ge 2\pi\inf_{s\in(0,1]}K(s)\norm{f}_{L^2(\R_+,rdr)}^2=\inf_{s\in(0,1]}K(s)\norm{\psi}_{L^2(\R^2)}^2$$
%At first $K$ can be estimated from below as\begin{align}\label{proof_lemma_3_1_1}K(\rho)\ge C\int_1^\infty J_0(s r)^2r^{-1}\<\log r\>^{-2\alpha}dr=\int_s^\infty J_0(r)^2r^{-1}\<\log(r/s)\>^{-2\alpha}dr. \end{align}
It is easy to see that the Bessel function  $J_0(z)$ is bounded on $[0,\infty)$. % and satisfies\begin{align}\label{proof_lemma_3_2}C_1\le J_0(z)^2\le C_2\end{align}for $0\le z\le 1$ with some $C_2>C_1>0$. 
In particular, we have 
$$
K(s)\le C\int_\delta^{1-\delta}r^{-1}|\log r|^{-1}dr\le C(|\log(\log\delta)|+|\log(\log(1-\delta))|),\quad s\ge0,
$$
which, together with the condition $f_\psi\in L^2(\R_+,rdr)$, implies $w_\delta e^{it\Delta_{\R^2}}\psi\in L^2(\R^{1+2})$. On the other hand, if $0\le z\le 1$, then $J_0(z)^2\ge c_0$ with some $c_0>0$ and hence $K(s)$ satisfies
$$
K(s)\ge c_0\int_\delta^{1-\delta}r^{-1}|\log r|^{-1}dr=c_0\int_{\log \delta}^{\log(1-\delta)}\frac{dt}{|t|}
%\ge C_1\int_{\log\delta}^{-1/2}\frac{dt}{|t|}
\ge C|\log(\log\delta)|,\quad 0\le s\le1,
$$
%. If we decompose the right hand side of \eqref{proof_lemma_3_1_1} as$$K_1(s):=\int_1^\infty J_0(r)^2r^{-1}\<\log(r/s)\>^{-2\alpha}dr,\quadK_2(s):=\int_r^1 J_0(r)^2r^{-1}\<\log(r/s)\>^{-2\alpha}dr.$$then $\sup\limits_{s>0}K_1(s)<\infty$ by \eqref{proof_lemma_3_1}. On the other hand, \eqref{proof_lemma_3_2} implies
which, together with \eqref{proof_lemma_3_1}, shows $\norm{w_\delta e^{it\Delta_{\R^2}}\psi}_{L^2(\R^{1+2})}\to +\infty$ as $\delta\to0$.
\end{proof}

%remark\begin{remark}By the above argument, ons has the formula$$\norm{w e^{it\Delta}\psi}_{L^2_tL^2_x}^2=2\pi\int_0^\infty\Big(\int_0^\infty J_0(sr)^2w(r)^2rdr\Big) |f_\psi(s)|^2s ds$$for any radial functions $w\in L^2_{\mathrm{loc}}(\R^2)$ and $\psi\in L^2(\R^2)$. Using this identity and the asymptotics of $J_0(z)$, namely $|J_0(z)|\le C|z|^{-1/2}$ for $z\ge1$ as well as \eqref{proof_lemma_3_2}, one can prove that if $\alpha>1/2$ then \begin{align}\label{lemma_3_1}\norm{|x|^{-1}\<\log |x|\>^{-\alpha}e^{it\Delta}\psi}_{L^2(\R^{1+2})}\le C\norm{\psi}_{L^2(\R^2)}\end{align}for any radial $\psi\in L^2(\R^2)$. Furthermore, combining with Proposition \ref{proposition_4} (see below), we see that \eqref{lemma_3_1} holds for all $\psi\in L^2(\R^2)$. This slightly extends a previous result by \cite{Wal} in which \eqref{lemma_3_1} has been shown with $|x|^{-1}\<\log |x|\>^{-\alpha}$ replaced by $\<x\>^{-1-\ep}$, $\ep>0$. \end{remark}

%proof
\begin{proof}[Proof of Theorem \ref{theorem_4}: The homogeneous case]
Since $L^{p,q_1}\subset L^{p,q_2}$ if $q_1\le q_2$, we may assume $2<q<\infty$. It is convenient to write $q={2\sigma}/{(\sigma-2)}$ with $2<\sigma<\infty$ so that $1/2=1/q+1/\sigma$. 
Assume for contradiction that $e^{-itH}\psi\in L^2(\R;L^{2^* ,q}(\R^n))$ for any $\psi\in L^2_{\mathrm{rad}}(\R^n)$. 

Fix $\psi_2\in L^2_{\mathrm{rad}}(\R^2)$ arbitrarily. Since $\psi := U^*\psi_2\in L^2_{\mathrm{rad}}(\R^n)$, H\"older's inequality imply
\begin{align}
\label{proof_theorem_4_0_1}
\norm{we^{-itH}\psi}_{L^2(\R^{1+n})}
\le C\norm{w}_{L^{n,\sigma}(\R^n)}\norm{e^{-itH}\psi}_{L^2(\R;L^{2^* ,q}(\R^n)}<\infty
%\le C\norm{w}_{L^{n,\sigma}(\R^n)}\norm{u}_{L^2(\R;L^{2^* ,q}(\R^n)}
\end{align}
for any $w\in L^{n,\sigma}(\R^n)$. On the other hand, %as in the previous section, we have $$e^{-itH}|x|^{-(n-2)/2}\psi_2=|x|^{-(n-2)/2}e^{it\Delta_{\R^2}}\psi_2.$$ Furthermore, 
assuming $w$ is also radially symmetric and using the formula \eqref{proof_proposition_3_1}, $we^{-itH}\psi$ satisfies
\begin{equation}
\begin{aligned}
\label{proof_theorem_4_0_2}
\norm{we^{-itH}\psi}_{L^2(\R^{1+n})}
=\big|\big|{U^*we^{it\Delta_{\R^2}}\psi_2}\big|\big|_{L^2(\R^{1+n})}
=\norm{we^{it\Delta_{\R^2}}\psi_2}_{L^2(\R^{1+2})}.
\end{aligned}
\end{equation}
%where $\omega_n$ is the volume of the unit $n$-dimensional ball. 
Hence \eqref{proof_theorem_4_0_1} and \eqref{proof_theorem_4_0_2} imply $we^{it\Delta_{\R^2}}\psi_2\in L^2(\R^{1+2})$ 
%\begin{align}\label{proof_theorem_4_3}\norm{we^{it\Delta_{\R^2}}\psi_2}_{L^2(\R^{1+2})}\le C\norm{w}_{L^{n,\sigma}(\R^n)}\norm{u}_{L^2(\R;L^{2^* ,q}(\R^n)}<\infty.\end{align}
for any $w\in L^{n,\sigma}_{\mathrm{rad}}(\R^n)$. 

Now if we take $1/\sigma<\alpha\le 1/2$ then $w_0$ defined in Lemma \ref{lemma_2} belongs to $L^{n,\sigma}_{\mathrm{rad}}(\R^n)$ and hence $w_0e^{it\Delta_{\R^2}}\psi_2\in L^2(\R^{1+2})$ by the above argument,  which contradicts with Lemma \ref{lemma_3_0}. 
\end{proof}

 %remark\begin{remark}In fact, we have shown that, for any $2<\sigma<\infty$, there exists $w\in L^{n,\sigma}(\R^n)$ such that the estimate \eqref{proof_theorem_4_1} does not hold in general. In contrast to this negative result, it is known for the case $n=2$ that there exists $C>0$ such that$$\norm{we^{it\Delta}\psi}_{L^2(\R^{1+2})}\le C\norm{w}_{L^2(\R^2)}\norm{\psi}_{L^2(\R^2)}$$holds for any $w\in L^2(\R^2)$ and $\psi\in L^2(\R^2)$. Indeed, this is an immediate consequence of H\"older's inequality and the following time-space estimate by \cite{Pla} (see also \cite{Rog}): $$\norm{e^{it\Delta}\psi}_{L^\infty(\R^2_x;L^2(\R_t))}\le C\norm{\psi}_{L^2(\R^2)}. $$\end{remark}

%subsection
\subsection{The inhomogeneous case}
%It is possible to construct a counterexample of  \eqref{theorem_4_2} by a similar argument as above. We however gives a different proof which implies a slightly more general result.  

We begin by recalling two known results on the two-dimensional free Schr\"odinger equation. 

%{lemma}
\begin{lemma}%[Two-dimensional weighted $L^2$-estimates]
\label{lemma_3}
There exists $C>0$ such that, for any $\psi,w\in L^2(\R^2)$, 
$$
\norm{we^{it\Delta_{\R^2}}\psi}_{L^2(\R^{1+2})}\le C\norm{w}_{L^2(\R^2)}\norm{\psi}_{L^2(\R^2)}
$$
\end{lemma}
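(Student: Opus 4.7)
The plan is a direct Fourier argument in the time variable, exploiting the elementary bound $\norm{\widehat{|w|^2}}_{L^\infty}\le \norm{|w|^2}_{L^1}=\norm{w}_{L^2}^2$. Starting from the spectral representation $e^{it\Delta_{\R^2}}\psi(x)=(2\pi)^{-2}\int e^{ix\cdot\xi-it|\xi|^2}\widehat\psi(\xi)\,d\xi$, switching to polar coordinates $\xi=\rho\omega$ and setting $\lambda=\rho^2$, I would rewrite $u(t,x):=e^{it\Delta_{\R^2}}\psi(x)$ as the (inverse) Fourier transform in $t$ of
$$G(\lambda,x):=\int_{\Sphere^1}e^{i\sqrt\lambda\,x\cdot\omega}\widehat\psi(\sqrt\lambda\,\omega)\,d\omega,\quad\lambda>0,$$
extended by zero for $\lambda\le 0$. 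Plancherel in $t$ then gives $\int_{\R}|u(t,x)|^2\,dt=c\int_0^\infty|G(\lambda,x)|^2\,d\lambda$, so the lemma reduces to bounding the integral $\iint|w(x)|^2|G(\lambda,x)|^2\,d\lambda\,dx$.

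The main step is a pointwise-in-$\lambda$ estimate for $\int|w(x)|^2|G(\lambda,x)|^2\,dx$. Rewriting $G(\lambda,x)=\lambda^{-1/2}\int_{|\xi|=\sqrt\lambda}e^{ix\cdot\xi}\widehat\psi(\xi)\,d\sigma(\xi)$, expanding the modulus squared and integrating in $x$ produces a double integral on the circle $|\xi|=\sqrt\lambda$ with kernel $\widehat{|w|^2}(\eta-\xi)$. Applying $\norm{\widehat{|w|^2}}_{L^\infty}\le\norm{w}_{L^2}^2$ together with the Cauchy--Schwarz inequality on the circle of length $2\pi\sqrt\lambda$ yields
$$\int|w(x)|^2|G(\lambda,x)|^2\,dx\le C\norm{w}_{L^2}^2\,\lambda^{-1/2}\int_{|\xi|=\sqrt\lambda}|\widehat\psi(\xi)|^2\,d\sigma(\xi).$$
Integrating in $\lambda$ via the substitution $\rho=\sqrt\lambda$ (so $\lambda^{-1/2}\,d\lambda=2\,d\rho$) followed by polar coordinates on the frequency side gives $\int_0^\infty\lambda^{-1/2}\int_{|\xi|=\sqrt\lambda}|\widehat\psi|^2\,d\sigma\,d\lambda=2\int_{\R^2}|\widehat\psi(\xi)|^2\,d\xi=C\norm{\psi}_{L^2}^2$, which closes the estimate.

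The main technical obstacle is rigorous justification of the time-Plancherel step: for individual $x$, the function $t\mapsto e^{it\Delta_{\R^2}}\psi(x)$ need not belong to $L^2(\R_t)$ (conservation of mass forces $\norm{u(t,\cdot)}_{L^2_x}=\norm{\psi}_{L^2}$ at every $t$, so $u$ is not in $L^2_{t,x}$ without a weight). I would handle this by first assuming $\psi\in\mathcal S(\R^2)$ and $w\in L^2\cap L^\infty$, for which all the integrals converge absolutely and Fubini applies; the weighted combination $|w|^2|u|^2$ is then integrable in $(t,x)$, so Plancherel in $t$ can be applied inside the $dx$-integral after commuting with spatial integration. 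The general case $\psi,w\in L^2(\R^2)$ then follows by density.
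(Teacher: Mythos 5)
Your argument is correct and complete, but it takes a genuinely different route from the paper. The paper disposes of Lemma~\ref{lemma_3} in one line by citing Planchon's local-in-space, global-in-time estimate
$\norm{e^{it\Delta_{\R^2}}\psi}_{L^\infty_x L^2_t}\le C\norm{\psi}_{L^2}$
from \cite{Pla} and then applying H\"older's inequality in $x$. You instead give a self-contained proof: Plancherel in time turns the left-hand side into an integral over the Fourier dual variable $\lambda=|\xi|^2$, and the restriction to the circle $|\xi|=\sqrt\lambda$ is handled by the trivial bound $\norm{\widehat{|w|^2}}_{L^\infty}\le\norm{w}_{L^2}^2$ together with Cauchy--Schwarz on the circle; the factors of $\sqrt\lambda$ cancel exactly when you change variables back to $\rho=|\xi|$, recovering $\norm{\widehat\psi}_{L^2}^2$. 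This is a correct and rather clean computation. It is worth observing that your estimate
$$\sup_x|G(\lambda,x)|^2\le 2\pi\,\lambda^{-1/2}\int_{|\xi|=\sqrt\lambda}|\widehat\psi(\xi)|^2\,d\sigma(\xi)$$
is already uniform in $x$ (you never used the $x$-integration except to pick up $\norm{w}_{L^2}^2$), so with essentially no extra work your method also proves the stronger $L^\infty_xL^2_t$ estimate of \cite{Pla} that the paper quotes. The trade-off is clear: the paper's argument is shorter because it outsources the analysis, while yours is longer but makes the lemma independent of the external reference and exposes where the two-dimensional scaling is used. Your justification of the Plancherel-in-$t$ step by first working with $\psi\in\mathcal S$ and $w\in L^2\cap L^\infty$ and then passing to the limit is appropriate; for $\psi\in\mathcal S$ the Schwartz decay of $\widehat\psi$ gives $G(\cdot,x)\in L^1\cap L^2(\R_+)$ for every $x$, so Plancherel applies pointwise in $x$ and Fubini is unproblematic.
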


%proof
\begin{proof}
It was shown by \cite{Pla} that 
$$
\norm{e^{it\Delta_{\R^2}}\psi}_{L^\infty(\R^2_x;L^2(\R_t))}\le C\norm{\psi}_{L^2(\R^2)}
$$
which, together with H\"older's inequality, clearly implies the desired bound. 
\end{proof}

%{lemma}
\begin{lemma}%[Low energy asymptotics of the two-dimensional resolvent]
\label{lemma_4}
Let $\kappa=-iz^{1/2}$ with $z=-\kappa^2$ for $\Re z<0$. Then
\begin{align}
\label{log}
(-\Delta_{\R^2}+\kappa^2)^{-1}=-\frac{\log \kappa}{2\pi} P_0+O(1),\quad |\kappa|\to0
\end{align}
 in $\mathbb B(L^2(\R^2,\<x\>^{s}dx),L^2(\R^2,\<x\>^{-s}dx))$ for $s>3/2$, where $P_0f:=\int_{\R^2} f(x)dx$. 
\end{lemma}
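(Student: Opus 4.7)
\textbf{Plan for Lemma \ref{lemma_4}.} The approach is to work with the explicit integral kernel of $(-\Delta_{\R^2}+\kappa^2)^{-1}$ in terms of the modified Bessel function $K_0$, peel off the rank-one singular part proportional to $\log\kappa$, and show the remainder is bounded from $L^2(\R^2,\<x\>^s dx)$ into $L^2(\R^2,\<x\>^{-s}dx)$ uniformly for $|\kappa|$ small. For $\Re\kappa>0$ (which corresponds to $\Re z<0$) the resolvent has integral kernel $G_\kappa(x,y)=\frac{1}{2\pi}K_0(\kappa|x-y|)$, and analytic continuation handles the remaining values of $\kappa$ in the sector of interest.

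I would then invoke the classical low-argument expansion $K_0(w)=-\log(w/2)-\gamma+\wtilde K(w)$ with $\wtilde K(w)=O(w^2|\log w|)$ as $w\to 0$, together with the exponential decay of $K_0$ at infinity, to decompose
$$G_\kappa(x,y)=-\frac{\log\kappa}{2\pi}-\frac{1}{2\pi}\bigl(\log(|x-y|/2)+\gamma\bigr)+\frac{1}{2\pi}\wtilde K(\kappa|x-y|).$$
The first, constant-in-$(x,y)$ term is precisely the kernel of the rank-one operator $-\frac{\log\kappa}{2\pi}P_0$, where $P_0$ is interpreted as sending $f$ to the constant function $\int f(y)\,dy$; this is bounded $L^2(\R^2,\<x\>^s dx)\to L^2(\R^2,\<x\>^{-s}dx)$ as soon as $\<x\>^{-s}\in L^2(\R^2)$, which holds for any $s>1$. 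The second is a $\kappa$-independent logarithmic kernel; a Schur/Hilbert--Schmidt estimate against the weights $\<x\>^{-s}\<y\>^{-s}$ shows it defines a bounded operator once $s$ is taken large enough, since $\log^2|x-y|$ is locally integrable on $\R^2$ and grows only logarithmically at infinity, where the weights absorb it.

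The main obstacle is the $\kappa$-dependent error operator $E_\kappa$ with kernel $\frac{1}{2\pi}\wtilde K(\kappa|x-y|)$, which must be shown to be $O(1)$ in the weighted operator norm. I would split according to $\kappa|x-y|\le 1$ versus $\kappa|x-y|\ge 1$. On the near part, the pointwise bound $|\wtilde K(\kappa|x-y|)|\le C(\kappa|x-y|)^2|\log(\kappa|x-y|)|$ gives a contribution that is actually small in $\kappa$ once one inserts the weights and integrates in Hilbert--Schmidt fashion. On the far part, the exponential decay of $K_0(\kappa|x-y|)$ leaves only the compensating pieces $-\log(\kappa|x-y|/2)-\gamma$; but $|x-y|\ge|\kappa|^{-1}$ forces at least one of $\<x\>,\<y\>$ to be $\gtrsim|\kappa|^{-1}$, so the weights convert the $|\log\kappa|$ growth into a bound of order $|\log\kappa|^2|\kappa|^{2s-2}$, which tends to $0$ for $s>1$. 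Tracking the various weighted integrals of $|x-y|^\alpha|\log|x-y||^\beta$ produced along the way is where the sharp threshold $s>3/2$ enters, and ensuring that every estimate is uniform in complex $\kappa$ (rather than only real positive $\kappa$) is the most delicate point; once these three ingredients are assembled, the expansion \eqref{log} follows.
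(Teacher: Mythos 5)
The paper does not actually prove Lemma \ref{lemma_4}: it simply cites \cite[Lemma 5]{Sch}. So your task was to reconstruct an external result, and your outline is in fact the standard argument that underlies Schlag's lemma (and the related two-dimensional low-energy resolvent expansions of Bollé--Gesztesy--Danneels and Jensen--Nenciu): write the resolvent kernel as $\frac{1}{2\pi}K_0(\kappa|x-y|)$ for $\Re\kappa>0$, insert the small-argument expansion of $K_0$, identify the constant $-\frac{\log\kappa}{2\pi}$ piece with the rank-one operator $-\frac{\log\kappa}{2\pi}P_0$, and estimate the remaining kernels in a weighted Hilbert--Schmidt/Schur sense after splitting at $|\kappa||x-y|\sim1$. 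Your treatment of the far region (using the exponential decay of $K_0$ together with the observation that $|x-y|\ge|\kappa|^{-1}$ forces one of the weights to be $\lesssim|\kappa|^{s}$) and of the near region (using $\wtilde K(w)=O(w^2|\log w|)$ and borrowing part of the $|x-y|$ power to pay for the weights) is exactly the right mechanism, and it does close.

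One point worth flagging: your claimed threshold for the boundedness of $P_0$ uses the convention $\|f\|_{L^2(\<x\>^{s}dx)}=\|\<x\>^{s}f\|_{L^2}$ (you require $\<x\>^{-s}\in L^2$, i.e.\ $s>1$). But the way the paper invokes Lemma \ref{lemma_4} in the proof of Theorem \ref{theorem_4}, where $\|\<x\>^{-2}\cdot\|_{L^2}\le C\|\<x\>^{2}\cdot\|_{L^2}$ is said to be an instance of \eqref{log} ``on $L^2(\R^2,\<x\>^4dx)$'', shows the intended convention is the measure-theoretic one, $\|f\|_{L^2(\<x\>^{s}dx)}^2=\int|f|^2\<x\>^{s}dx$, under which the $P_0$ term alone already requires $s>2$. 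So the arithmetic behind ``the sharp threshold $s>3/2$'' needs to be redone in the paper's convention; you gesture at it rather than derive it. This does not affect the application in the paper (which takes $s=4$), but it is a genuine loose end in your write-up, and it is the same place where one must also be careful about the branch of $\kappa$ and the sector on which $K_0(\kappa\,\cdot)$ retains exponential decay when $\kappa$ is complex.
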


%proof
\begin{proof}
See, {\it e.g.}, \cite[Lemma 5]{Sch}. 
\end{proof}

%proof
\begin{proof}[Proof of Theorem \ref{theorem_4}: The inhomogeneous case]
Assume for contradiction that
$$
\bignorm{\int_0^t e^{-i(t-s)H}F(s)ds}_{L^2(\R;L^{2^*,\infty}(\R^n))}\le C\norm{F}_{L^2(\R;L^{2_*,1}(\R^n))}
$$
with some $C>0$ independent of $F$. 
Taking  into account the fact that $s\in [-T,T]$ if $t\in[-T,T]$ and $s\in [0,t]$ (or $s\in [t,0]$), one may replace the time interval $\R$ in the both sides by $[-T,T]$ for any $T>0$. Moreover, using H\"older's inequality, we obtain
\begin{align}
\label{proof_theorem_4_1}
\bignorm{w\int_0^te^{-i(t-s)H}F(s)ds}_{L^2([-T,T];L^{2}(\R^n))}&\le C\norm{w^{-1}F}_{L^{2}([-T,T];L^2(\R^n))},
\end{align}
for any $w\in L^{n,2}(\R^n)$ with $w^{-1}\in L^2_{\loc}(\R^n)$ and $F\in L^2([-T,T];L^2_{\mathrm{rad}}(\R^n,w^{-2}dx))$.  
For simplicity, we take $w(x)=\<x\>^{-2}$. Plugging the formula \eqref{proof_proposition_3_1} into \eqref{proof_theorem_4_1} yields
\begin{align}
\label{proof_theorem_4_2}
\bignorm{\<x\>^{-2}\int_0^te^{i(t-s)\Delta_{\R^2} }F_2(s)ds}_{L^2([-T,T];L^{2}(\R^2))}&\le C\norm{\<x\>^2F_2}_{L^{2}([-T,T];L^2(\R^2))}
\end{align}
for all $F_2\in L^2([-T,T];L^2_{\mathrm{rad}}(\R^2,\<x\>^4dx))$. Now we claim that \eqref{proof_theorem_4_2} implies
\begin{align}
\label{proof_theorem_4_3}
\norm{\<x\>^{-2}(-\Delta_{\R^2}+\kappa^2)^{-1}g}_{L^2(\R^2)}\le C\norm{\<x\>^{2}g}_{L^2(\R^2)}
\end{align}
on $L^2_{\mathrm{rad}}(\R^2,\<x\>^4dx)$ with some $C>0$ being independent of $\kappa$. Then \eqref{proof_theorem_4_3} clearly contradicts with the logarithmic singularity as $|\kappa|\to0$ in  \eqref{log}. 

It remains to show that \eqref{proof_theorem_4_2} implies \eqref{proof_theorem_4_3}.   
Let $f\in \S(\R^2)$ be radially symmetric, $z\in \C\setminus[0,\infty)$ and set $u(t)=e^{-izt}f$. Since $u(t)$ solves
$$
i\partial_tu=-\Delta_{\R^2}u+F_2,\quad u|_{t=0}=f,
$$
where $F_2=-e^{-izt}(-\Delta_{\R^2}-z)f\in L^{2}([-T,T];L^2_{\mathrm{rad}}(\R^2,\<x\>^4dx))$, 
Lemma \ref{lemma_3} and \eqref{proof_theorem_4_2} imply
\begin{align}
\label{proof_theorem_4_4}
||\<x\>^{-2}u||_{L^2([-T,T];L^2(\R^2))}\le C ||f||_{L^2(\R^2)}+C ||\<x\>^2F||_{L^{2}([-T,T];L^2(\R^2))}, 
\end{align}
where $C$ is independent of $T$. By virtue of the specific formula of $u$ and $F$, one can compute
\begin{align*}
||\<x\>^{-2}u||_{L^2([-T,T];L^2(\R^2))}
&=\gamma(z,T)||\<x\>^{-2}f||_{L^2(\R^2)},\\
|| \<x\>^2 F ||_{L^2([-T,T];L^2(\R^2))}
&= \gamma(z,T) ||\<x\>^2(-\Delta_{\R^2}-z)f||_{L^2(\R^n)},
\end{align*}
where $\gamma(z,T):= || e^{izt} ||_{L^2([-T,T])} \geq \sqrt{T} , $
% if $ \Im z=0$ and 
%$$
%\gamma(z,T)=\left(\int_{-T}^T e^{-2\Im zt}dt\right)^\frac12=
%\left|\frac{e^{-2\Im zT}-e^{2\Im z T}}{2\Im z}\right|^\frac12\quad\text{if\quad $\Im z\neq0$}.
%$$
%Since
since $ |e^{izt}| \geq 1 $ either on $ [0,T] $ or $ [-T,0] $. In particular, $\gamma(z,T)\to \infty$ as $T\to  \infty$ for each $z$, so dividing by $\gamma(z,T)$ and letting $T\to  \infty$ in \eqref{proof_theorem_4_4}, we obtain
$$
\norm{\<x\>^{-2}f}_{L^2(\R^n)}\le C\norm{\<x\>^2(-\Delta_{\R^2}-z)f}_{L^2(\R^2)},\quad z\in \C\setminus[0,\infty),
$$
for all $f\in \S(\R^2)$ such that $f$ is radial. Now we plug $f=(-\Delta_{\R^2}-z)^{-1}g$ to obtain \eqref{proof_theorem_4_3} for all radial $g\in \S(\R^2)$. Finally, a density argument yields \eqref{proof_theorem_4_3} for all $g\in L^2_{\mathrm{rad}}(\R^2,\<x\>^4dx)$. 
 \end{proof}

%section
\section{A generalization of Corollary \ref{corollary_2}}
\label{section_generalization}
Here we extend the non-endpoint estimates \eqref{corollary_2_1} and \eqref{corollary_2_2} to more general operators of the form 
$$
H_V=-\Delta+V,\quad V(x)=-C_{\Hardy}|x|^{-2}+\wtilde V(|x|),
$$
where $\wtilde V(r)$ is a real-valued $C^1$ function on $\R_+$ satisfying $|\wtilde V(r)|\le C\<r\>^{-\mu}$ for some $\mu>3$ at least. Let $H_V$ is defined as a unique self-adjoint operator associated to a lower semi-bounded closable form $\<(-\Delta+V)u,u\>$ on $C_0^\infty(\R^n)$. 

In order to state the result, we introduce several notation. Similarly to the proof of Theorem \ref{theorem_1}, the following two-dimensional Schr\"odinger operator plays a key role: 
$$
\wtilde H=-\Delta+\wtilde V(|x|),\quad D(\wtilde H)=\H^1(\R^2).
$$ 
Since $\wtilde V$ is radially symmetric, we see that $P_n$ (resp. $P_2$) commutes with $H_V$ (resp. $\wtilde H$) and that $H_V$ is unitarily equivalent to $\wtilde H$ on the space of radial functions in the sense that 
\begin{align}
\label{lemma_generalization_1_0}
H_Vu=U^*\wtilde HUu,\quad u\in P_nD(H_V),
\end{align}
where $P_n:L^2(\R^n)\to L^2_{\mathrm{rad}}(\R^n)$ is an orthogonal projection and $U:L^2_{\mathrm{rad}}(\R^n)\to L^2_{\mathrm{rad}}(\R^2)$ is a unitary map defined by \eqref{projection_1} and \eqref{unitary_1}, respectively. 

Under the above condition on $\wtilde V$, it is known that $\wtilde H$ is self-adjoint on $L^2(\R^2)$. Suppose in addition that zero energy is not an eigenvalue of $\wtilde H$. Then the spectrum of $H$ is purely absolutely continuous on $[0,\infty)$ and is pure point on $(-\infty,0)$ with finitely many eigenvalues of finite multiplicities (see \cite[Chapter XIII]{ReSi} and \cite{Sto}). Let $P_{\mathrm{pp}}(\wtilde H)$ (resp. $P_{\mathrm{ac}}(\wtilde H)$) be the projection onto the pure point (resp. absolutely continuous) subspace of $\wtilde H$. Since $P_2$ commutes with $\wtilde H$, we see that $P_2$, $P_{\mathrm{pp}}(\wtilde H)$ and $P_{\mathrm{ac}}(\wtilde H)$ mutually commute by the spectral theorem. 
% and $P_{\mathrm{pp}}(\wtilde H)=\mathds 1_{(-\infty,0)}(\wtilde H)$ if $0\notin \sigma_{\mathrm{pp}}(\wtilde H)$ or $P_{\mathrm{pp}}(\wtilde H)=\mathds 1_{(-\infty,0]}(\wtilde H)$ if $0\in \sigma_{\mathrm{pp}}(\wtilde H)$,  . 
Define two bounded operators $\mathbb P_{\mathrm{pp}},\mathbb P_{\mathrm{ac}}:L^2(\R^n) \to L^2_{\mathrm{rad}}(\R^n)$ by
\begin{align*}
\mathbb P_{\mathrm{pp}}:=P_nU^*P_2P_{\mathrm{pp}}(\wtilde H)P_2U P_n,\quad
\mathbb P_{\mathrm{ac}}:=P_nU^*P_2P_{\mathrm{ac}}(\wtilde H)P_2U P_n. 
\end{align*}

%{lemma}
\begin{lemma}
\label{lemma_generalization_1}
$\mathbb P_{\mathrm{pp}}$ and $\mathbb P_{\mathrm{ac}}$ are orthogonal projections such that they commute with $H_V$ and $\Id=\mathbb P_{\mathrm{pp}}+\mathbb P_{\mathrm{ac}}+P^\perp_n$. 
Moreover, $\mathbb P_{\mathrm{pp}}$, $\mathbb P_{\mathrm{ac}}$ and $P_n^\perp$ mutually commute and $$\mathbb P_{\mathrm{pp}}\mathbb P_{\mathrm{ac}}=\mathbb P_{\mathrm{pp}}P_n^\perp=\mathbb P_{\mathrm{ac}}P_n^\perp=0.$$ 
\end{lemma}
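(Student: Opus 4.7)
The plan is to rewrite the definitions using a single partial isometry $\widetilde U := UP_n$ from $L^2(\R^n)$ to $L^2_{\mathrm{rad}}(\R^2)\subset L^2(\R^2)$ and derive every claim from clean operator identities. First I would record the basic adjoint relation $\widetilde U^* = P_n U^* P_2$, and then verify the two partial-isometry identities
\begin{align*}
\widetilde U^*\widetilde U &= P_n U^* P_2 U P_n = P_n U^*U P_n = P_n,\\
\widetilde U\widetilde U^* &= U P_n P_n U^* P_2 = UU^* P_2 = P_2,
\end{align*}
using that $UP_n$ already lands in $L^2_{\mathrm{rad}}(\R^2)$ (so $P_2 UP_n = UP_n$), that $U^*P_2$ lands in $L^2_{\mathrm{rad}}(\R^n)$ (so $P_n U^*P_2 = U^*P_2$), and unitarity of $U$ between the two radial subspaces.

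Next I would set $\widetilde P_{\mathrm{pp}} := P_2 P_{\mathrm{pp}}(\wtilde H) = P_{\mathrm{pp}}(\wtilde H)P_2$ and $\widetilde P_{\mathrm{ac}} := P_2 P_{\mathrm{ac}}(\wtilde H) = P_{\mathrm{ac}}(\wtilde H)P_2$ (which coincide with $P_2 P_{\mathrm{pp}}(\wtilde H)P_2$ and $P_2 P_{\mathrm{ac}}(\wtilde H)P_2$ since $P_2$ commutes with $\wtilde H$), noting that both are orthogonal projections, are mutually orthogonal, and satisfy $\widetilde P_{\mathrm{pp}}+\widetilde P_{\mathrm{ac}} = P_2$ because $\wtilde H$ has no singular continuous spectrum under our hypothesis that $0$ is not an eigenvalue of $\wtilde H$. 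With this notation,
\[
\mathbb P_{\mathrm{pp}} = \widetilde U^*\widetilde P_{\mathrm{pp}}\widetilde U,\qquad \mathbb P_{\mathrm{ac}} = \widetilde U^*\widetilde P_{\mathrm{ac}}\widetilde U.
\]
Self-adjointness is immediate, and idempotency follows from $\widetilde U\widetilde U^* = P_2$ combined with $\widetilde P_{\mathrm{pp}} P_2\widetilde P_{\mathrm{pp}}=\widetilde P_{\mathrm{pp}}$. The decomposition of the identity follows at once from
\[
\mathbb P_{\mathrm{pp}}+\mathbb P_{\mathrm{ac}} = \widetilde U^*(\widetilde P_{\mathrm{pp}}+\widetilde P_{\mathrm{ac}})\widetilde U = \widetilde U^* P_2\widetilde U = \widetilde U^*\widetilde U = P_n,
\]
so adding $P_n^\perp$ gives $\Id$. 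Mutual orthogonality with $P_n^\perp$ is trivial because $\mathbb P_{\mathrm{pp}} = P_n\mathbb P_{\mathrm{pp}} = \mathbb P_{\mathrm{pp}}P_n$ (as $\widetilde U = UP_n$ carries $P_n$ on the right and its range lies in $L^2_{\mathrm{rad}}(\R^n)$), and similarly for $\mathbb P_{\mathrm{ac}}$; hence $\mathbb P_{\mathrm{pp}}P_n^\perp = \mathbb P_{\mathrm{ac}}P_n^\perp = 0$. Orthogonality between $\mathbb P_{\mathrm{pp}}$ and $\mathbb P_{\mathrm{ac}}$ comes from
\[
\mathbb P_{\mathrm{pp}}\mathbb P_{\mathrm{ac}} = \widetilde U^*\widetilde P_{\mathrm{pp}}(\widetilde U\widetilde U^*)\widetilde P_{\mathrm{ac}}\widetilde U = \widetilde U^*\widetilde P_{\mathrm{pp}}P_2\widetilde P_{\mathrm{ac}}\widetilde U = \widetilde U^*\widetilde P_{\mathrm{pp}}\widetilde P_{\mathrm{ac}}\widetilde U = 0.
\]

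Finally, for the commutation with $H_V$ I would first promote the pointwise identity \eqref{lemma_generalization_1_0} together with $[P_n,H_V]=0$ to the intertwining $\widetilde U e^{-itH_V} = e^{-it\wtilde H}\widetilde U$ on all of $L^2(\R^n)$ (and analogously for resolvents, which is the cleanest way to bypass domain questions): applying $U$ to $P_n e^{-itH_V} = e^{-itH_V}P_n$ and using the unitary equivalence on radial functions yields $UP_n e^{-itH_V} = e^{-it\wtilde H}UP_n$. Because $\widetilde P_{\mathrm{pp}}$ commutes with $e^{-it\wtilde H}$, we obtain
\[
\mathbb P_{\mathrm{pp}}e^{-itH_V} = \widetilde U^*\widetilde P_{\mathrm{pp}}e^{-it\wtilde H}\widetilde U = \widetilde U^*e^{-it\wtilde H}\widetilde P_{\mathrm{pp}}\widetilde U = e^{-itH_V}\mathbb P_{\mathrm{pp}},
\]
and by the spectral theorem this is equivalent to commutation with $H_V$; the same argument applies to $\mathbb P_{\mathrm{ac}}$. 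The only slightly delicate step is the promotion of \eqref{lemma_generalization_1_0} to a full operator intertwining, but since both $P_n$ and $P_2$ commute with the corresponding self-adjoint operator and $U$ is unitary between the two radial subspaces, this is a direct functional-calculus argument and presents no real obstacle.
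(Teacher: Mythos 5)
Your proof is correct and takes essentially the same approach as the paper: you both rely on the identity $UP_nU^*P_2 = P_2$ (your $\widetilde U\widetilde U^*=P_2$), the decomposition $P_{\mathrm{pp}}(\wtilde H)+P_{\mathrm{ac}}(\wtilde H)=\Id$ to get $\mathbb P_{\mathrm{pp}}+\mathbb P_{\mathrm{ac}}=P_n$, and the intertwining coming from \eqref{lemma_generalization_1_0} for commutation with $H_V$. Packaging $UP_n$ as a partial isometry $\widetilde U$ is a tidy notational choice that streamlines the algebra but does not change the argument.
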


%proof
\begin{proof}
Recall that $P_2$, $P_{\mathrm{pp}}(\wtilde H)$ and $P_{\mathrm{ac}}(\wtilde H)$ are orthogonal projections. Then it is not hard to see that $\mathbb P_{\mathrm{pp}}$ and $\mathbb P_{\mathrm{ac}}$ are bounded, symmetric and thus self-adjoint. Since $\mathrm{Ran}\,U^*P_2\subset L^2_{\mathrm{rad}}(\R^n)$, one has $UP_nU^*P_2=P_2$ which implies
\begin{align*}
\mathbb P^2_{\mathrm{pp}}
=P_nU^*P_2P_{\mathrm{pp}}(\wtilde H)P_2P_{\mathrm{pp}}(\wtilde H)P_2U P_n
=\mathbb P_{\mathrm{pp}}. 
\end{align*}
A similar argument implies  $\mathbb P_{\mathrm{ac}}^2=\mathbb P_{\mathrm{ac}}$. Thus $\mathbb P_{\mathrm{pp}}$ and  $\mathbb P_{\mathrm{ac}}$ are orthogonal projections. 

Next, it is seen from \eqref{lemma_generalization_1_0} that
\begin{equation}
\begin{aligned}
\label{lemma_generalization_1_1}
P_2UP_nH_Vu&=\wtilde HP_2UP_nu,\quad
u\in D(H_V),\\
P_nU^*P_2\wtilde H v&=H_VP_nU^*P_2v,\quad v\in D(\wtilde H). 
\end{aligned}
\end{equation}
which imply that both $\mathbb P_{\mathrm{pp}}$ and $\mathbb P_{\mathrm{ac}}$ commute with $H_V$. Moreover, since 
$$
P_n=P_n^2=P_nU^*UP_n=P_nU^*P_2^2UP_n=P_nU^*P_2\Big(P_{\mathrm{pp}}(\wtilde H)+P_{\mathrm{ac}}(\wtilde H)\Big)P_2UP_n=\mathbb P_{\mathrm{pp}}+\mathbb P_{\mathrm{ac}},
$$
we have $\Id=\mathbb P_{\mathrm{pp}}+\mathbb P_{\mathrm{ac}}+P^\perp_n$. Finally, since $$P_nP_n^\perp=P_n^\perp P_n=0,\quad P_{\mathrm{pp}}(\wtilde H)P_{\mathrm{ac}}(\wtilde H)=P_{\mathrm{ac}}(\wtilde H)P_{\mathrm{pp}}(\wtilde H)=0,$$ $\mathbb P_{\mathrm{pp}}$, $\mathbb P_{\mathrm{ac}}$ and $P_n^\perp$ mutually commute and satisfy the last identity in the statement. 
\end{proof}

Next we recall the notion of zero resonance of $\wtilde H$ (see \cite{ErGr}). 

%{definition}
\begin{definition}						
\label{resonance}
Zero energy is said to be a resonance of $\wtilde H$ if there is a distributional solution $f\in L^p(\R^2)\setminus L^2(\R^2)$ with some $2< p\le \infty$ to the equation $\wtilde Hf=0$. In the case of $p=\infty$ the resonance is called an $\mathrm{s}$-wave resonance, while it is called a ${\mathrm{p}}$-wave resonance if $2<p<\infty$. 
\end{definition}

Let us now state the main result in this section. 

%theorem
\begin{theorem}	
\label{theorem_generalization_1}
Let $n\ge3$ and $\wtilde V\in C^1(\R_+)$ be a real-valued function such that  
$
|\wtilde V(r)|\le C\<r\>^{-\mu}
$
with some $\mu>3$. Suppose that $V(x):=-C_{\Hardy}|x|^{-2}+\wtilde V(|x|):\R^n\to \R$ satisfies Assumption \ref{assumption_1} and that one of the following conditions holds:
\begin{itemize}
\item[{\rm (H1)}] Zero energy is neither an eigenvalue nor a resonance of $\wtilde H$. 
\item[{\rm (H2)}] $\mu>4$ and $\wtilde H$ has only an s-wave resonance at zero energy. 
\end{itemize}
Then, for any ($n$-dimensional) admissible pairs $(p,q)$ and $(\tilde p,\tilde q)$ with $p,\tilde p>2$, one has
\begin{equation}
\begin{aligned}
\label{theorem_generalization_1_1}
\norm{e^{-itH_V}(\Id-\mathbb P_{\mathrm{pp}})\psi}_{L^p(\R;L^q(\R^n))}&\le C\norm{\psi}_{L^2(\R^n)},\\
\bignorm{\int_0^t e^{-i(t-s)H_V}(\Id-\mathbb P_{\mathrm{pp}}) F(s)ds}_{L^p(\R;L^q(\R^n))}&\le C\norm{F}_{L^{\tilde p'}(\R;L^{\tilde q'}(\R^n))}
\end{aligned}
\end{equation}
for all $\psi\in L^2(\R^n)$ and $F\in {L^{\tilde p'}(\R;L^{\tilde q'}(\R^n))}$. 
%\item[{\rm (2)}] Assume either that $\wtilde V\ge0$ on $\R_+$ and $\mu>4$, or that $\wtilde V>0$ on $\R_+$ and $\mu>3$. Then\begin{align*}\norm{e^{-itH_V}\psi}_{L^p(\R;L^q(\R^n))}&\le C\norm{\psi}_{L^2(\R^n)},\quad \psi\in L^2(\R^n),\\ \bignorm{\int_0^t e^{-i(t-s)H_V}F(s)ds}_{L^p(\R;L^q(\R^n))}&\le C\norm{F}_{L^{\tilde p'}(\R;L^{\tilde q'}(\R^n))},\quad F\in {L^{\tilde p'}(\R;L^{\tilde q'}(\R^n))}\end{align*}for any ($n$-dimensional) admissible pairs $(p,q)$ and $(\tilde p,\tilde q)$ with $p,\tilde p>2$. \end{itemize}
\end{theorem}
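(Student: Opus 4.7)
The plan is to decompose $\Id - \mathbb{P}_{\mathrm{pp}} = \mathbb{P}_{\mathrm{ac}} + P_n^\perp$ from Lemma \ref{lemma_generalization_1} and establish non-endpoint Strichartz estimates on each piece separately.

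For the non-radial part $e^{-itH_V} P_n^\perp$, the hypothesis that $V = -C_{\Hardy}|x|^{-2} + \wtilde V(|x|)$ satisfies Assumption \ref{assumption_1} puts us directly in the setting of Proposition \ref{proposition_4}, which yields the endpoint homogeneous and inhomogeneous estimates in $L^2_t L^{2^*,2}_x$. Repeating verbatim the interpolation argument from the proof of Corollary \ref{corollary_2} (real interpolation with the trivial $L^\infty_t L^2_x$ bound, complex interpolation to reach the lower-$q$ range, and Christ--Kiselev for the inhomogeneous estimate) produces \eqref{theorem_generalization_1_1} with $P_n^\perp$ in place of $\Id-\mathbb{P}_{\mathrm{pp}}$.

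For the radial absolutely continuous part, Lemma \ref{lemma_generalization_1} together with \eqref{lemma_generalization_1_0} and the fact that $UP_n\psi \in L^2_{\mathrm{rad}}(\R^2)$ (so that $P_2 UP_n\psi = UP_n\psi$) yields the intertwining
$$
e^{-itH_V} \mathbb{P}_{\mathrm{ac}} \psi = U^*\, e^{-it\wtilde H}\, P_{\mathrm{ac}}(\wtilde H)\, U P_n \psi,
$$
reducing the problem to Strichartz estimates for $\wtilde H$ on two-dimensional radial data. Under either (H1) or (H2), existing 2D dispersive theory provides the bound $\|e^{-it\wtilde H} P_{\mathrm{ac}}(\wtilde H)\|_{L^1(\R^2)\to L^\infty(\R^2)} \le C|t|^{-1}$. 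I would combine this with Tao's free radial endpoint \eqref{Tao} to establish a perturbed 2D radial endpoint
$$
\|e^{-it\wtilde H} P_{\mathrm{ac}}(\wtilde H) P_2 \phi\|_{L^2_t L^\infty_x(\R^2)} \le C \|\phi\|_{L^2(\R^2)}.
$$
Once this is available, H\"older's inequality \eqref{Holder} with $|x|^{-(n-2)/2} \in L^{2^*,\infty}(\R^n)$, together with the identity $\|\Phi\|_{L^\infty(\R^n)} = \|\Phi\|_{L^\infty(\R^2)}$ valid for any radial $\Phi$, lifts it to the $n$-dimensional weak-type endpoint
$$
\|e^{-itH_V}\mathbb{P}_{\mathrm{ac}}\psi\|_{L^2_t L^{2^*,\infty}_x(\R^n)} \le C\|\psi\|_{L^2(\R^n)},
$$
precisely in the spirit of Proposition \ref{proposition_3}. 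The same interpolation and Christ--Kiselev machinery used in the first step then yields the non-endpoint homogeneous and inhomogeneous estimates for $\mathbb{P}_{\mathrm{ac}}$, and summing the two pieces gives \eqref{theorem_generalization_1_1}.

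The main obstacle is the perturbed 2D radial endpoint for $\wtilde H$. Tao's proof of \eqref{Tao} exploits the explicit Bessel-function structure of $e^{it\Delta_{\R^2}}$ restricted to the angular mode $\ell=0$, which is no longer available after the addition of $\wtilde V$. A reasonable perturbative route would be to combine Tao's free estimate with the $|t|^{-1}$ dispersive bound for $\wtilde H$ via a Duhamel iteration, or alternatively via a Kato smooth-perturbation argument analogous to that of Proposition \ref{proposition_4} (reduced to the one-dimensional radial channel), exploiting the decay $|\wtilde V(r)| \le C\<r\>^{-\mu}$. The stronger decay $\mu>4$ in (H2), together with the restriction that the zero-energy anomaly be an s-wave (hence radial) resonance, is exactly what is needed for this step: any pure-point or threshold contribution is either removed by $P_{\mathrm{ac}}(\wtilde H)$ or controlled by the s-wave resonance correction appearing in the 2D low-frequency resolvent expansion.
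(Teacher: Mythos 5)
Your decomposition $\Id - \mathbb{P}_{\mathrm{pp}} = \mathbb{P}_{\mathrm{ac}} + P_n^\perp$, your treatment of the non-radial piece $P_n^\perp$ via Proposition \ref{proposition_4} plus interpolation, and your intertwining formula $e^{-itH_V}\mathbb{P}_{\mathrm{ac}}\psi = U^*P_2 e^{-it\wtilde H}P_{\mathrm{ac}}(\wtilde H)P_2 U P_n\psi$ all agree with the paper. The gap is in the radial absolutely continuous piece: you route the argument through a \emph{two-dimensional radial weak-type endpoint} $\norm{e^{-it\wtilde H}P_{\mathrm{ac}}(\wtilde H)P_2\phi}_{L^2_tL^\infty_x(\R^2)}\le C\norm{\phi}_{L^2(\R^2)}$, which you acknowledge is the main obstacle and do not prove. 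The two routes you sketch do not close this gap. The Keel--Tao machinery applied to the dispersive bound $\norm{e^{-it\wtilde H}P_{\mathrm{ac}}(\wtilde H)}_{L^1\to L^\infty}\le C|t|^{-1}$ of Proposition \ref{proposition_generalization_1} gives precisely the \emph{non-endpoint} admissible pairs in two dimensions and excludes $(2,\infty)$ by design, so a Duhamel iteration built only on these ingredients cannot reach $L^2_tL^\infty_x$; and the double-endpoint inhomogeneous estimate needed to close such an iteration is exactly what is unavailable (Christ--Kiselev fails at $p=\tilde p=2$). A smoothing/resolvent argument along the lines of Appendix \ref{appendix_A} would instead have to confront the 2D logarithmic low-energy singularity \eqref{log}, which is the very phenomenon used in Section \ref{section_counterexample} to produce counterexamples; whether the zero-resonance assumptions (H1) or (H2) suffice to kill it at the endpoint is far from clear and is never checked.

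The crucial observation you are missing is that the theorem only asks for $p>2$, so the endpoint is not needed in two dimensions either, and one can \emph{match the time exponent}. Given an $n$-dimensional non-endpoint admissible pair $(p,q)$ with $p>2$, the paper takes the two-dimensional pair $(p,q_2)$ with $2/p+2/q_2=1$, which is automatically non-endpoint since $q_2<\infty$. Proposition \ref{proposition_generalization_1} then gives $\norm{v}_{L^p_tL^{q_2,2}_x(\R^2)}\le C\norm{\psi}_{L^2(\R^n)}$ directly for $v(t)=e^{-it\wtilde H}P_{\mathrm{ac}}(\wtilde H)P_2UP_n\psi$, and the transfer to $\R^n$ is done not with the fixed weight $|x|^{-(n-2)/2}$ but with the $q$-dependent weight $|x|^{-(n-2)(1/2-1/q)}\in L^{s,\infty}(\R^2)$, where $1/q=1/s+1/q_2$; one checks $2/s=(n-2)(1/2-1/p)$, so the weight is indeed in $L^{s,\infty}(\R^2)$, and the embedding $L^{q_2,2}\subset L^{q_2,q}$ finishes the H\"older step. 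This lands directly in $L^p_tL^q_x(\R^n)$ without ever invoking a weak-type endpoint for $e^{-itH_V}\mathbb{P}_{\mathrm{ac}}$, and the inhomogeneous estimates then follow as you say from Christ--Kiselev. Replacing your endpoint step by this exponent-matching argument both fills the gap and shortens the proof.
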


%remark
\begin{remark}
If in addition $\wtilde V$ is non-negative, then it is easy to see that $\wtilde H$ has no eigenvalue and thus \eqref{theorem_generalization_1_1} hold without the spectral projection $\Id-\mathbb P_{\mathrm{pp}}$. 
\end{remark}

A key ingredient in the proof of this theorem is the following dispersive estimate:

%proposition
\begin{proposition}
\label{proposition_generalization_1}
Let $\wtilde V$ be a real-valued function on $\R^2$ such that $|\wtilde V(x)|\le C\<x\>^{-\mu}$ on $\R^2$ with $\mu>3$ and $\wtilde V$ satisfies either {\rm (H1)} or {\rm (H2)}. Then
\begin{align}
\label{proposition_generalization_1_1}
\norm{e^{-it\wtilde H}P_{\mathrm{ac}}(\wtilde H)}_{L^1(\R^2)\to L^\infty(\R^2)}\le C|t|^{-1},\quad |t|\neq0.
\end{align}
In particular, for all $(p,q)\in[2,\infty]^2$ satisfying $2/p+2/q=1$ and $p>2$, one has
\begin{align}
\label{proposition_generalization_1_2}
\norm{e^{-it\wtilde H}P_{\mathrm{ac}}(\wtilde H)\psi}_{L^p(\R;L^{q,2}(\R^2))}&\le C\norm{\psi}_{L^2(\R^2)},\quad \psi\in L^2(\R^2). 
\end{align}
\end{proposition}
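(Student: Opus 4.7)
The plan is to first establish the dispersive estimate \eqref{proposition_generalization_1_1} and then derive the Strichartz-type bound \eqref{proposition_generalization_1_2} from it by the abstract Keel--Tao machinery combined with real interpolation. For the dispersive estimate I would proceed through Stone's formula and a low/high frequency decomposition. By the spectral theorem,
$$
e^{-it\wtilde H}P_{\mathrm{ac}}(\wtilde H)=\frac{1}{2\pi i}\int_0^\infty e^{-it\lambda}\big[R_{\wtilde V}(\lambda+i0)-R_{\wtilde V}(\lambda-i0)\big]d\lambda,
$$
where $R_{\wtilde V}(z)=(\wtilde H-z)^{-1}$. After inserting a cutoff $\chi\in C_0^\infty(\R)$ with $\chi=1$ near $0$, the analysis splits into a high-frequency piece (with $1-\chi$) and a low-frequency piece (with $\chi$), and in each one it suffices to produce a $|t|^{-1}$ bound for the corresponding oscillatory integral as an operator from $L^1(\R^2)$ to $L^\infty(\R^2)$.

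For the high-frequency contribution I would iterate the resolvent identity $R_{\wtilde V}=R_0-R_0\wtilde V R_{\wtilde V}$ a finite number of times, writing the spectral density as a sum of Born terms built from the free two-dimensional resolvent plus a tail in which the weight $\<x\>^{-s}\wtilde V\<x\>^s$ supplies enough spatial decay (since $\mu>3$) to apply a limiting absorption principle and repeated integrations by parts in $\lambda$. The purely free Born terms are handled by the classical dispersive estimate $\norm{e^{it\Delta_{\R^2}}}_{L^1\to L^\infty}\le C|t|^{-1}$; for the tail, the weighted resolvent is smooth in $\lambda$ on $\supp(1-\chi)$ with symbol-class behaviour, so that integrating by parts in $\lambda$ produces the decay rate $|t|^{-1}$.

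The main obstacle is the low-frequency contribution, because the free two-dimensional resolvent carries the logarithmic singularity \eqref{log}. Here I would import the detailed asymptotic expansion of $R_{\wtilde V}(\lambda\pm i0)$ at zero energy developed by Schlag \cite{Sch} and Erdogan--Green \cite{ErGr}. Under hypothesis {\rm (H1)} a Grushin/Feshbach reduction shows that the free singularity is absorbed into an invertible operator and one can write
$$
R_{\wtilde V}(\lambda\pm i0)=\frac{A_\pm(\lambda)}{\log\lambda}+E_\pm(\lambda),
$$
with $A_\pm,E_\pm$ sufficiently smooth in $\lambda$ in the weighted $L^2$ sense, so that $R_{\wtilde V}(\lambda+i0)-R_{\wtilde V}(\lambda-i0)$ is a smooth function of $\lambda$ whose first derivative is $\lambda^{-1}$-integrable up to logarithms, and integration by parts in the oscillatory integral delivers the $|t|^{-1}$ bound. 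Under hypothesis {\rm (H2)} the presence of an $\mathrm{s}$-wave resonance forces one to invert a larger matrix that includes the rank-one projection onto the resonance function $\phi\in L^\infty\setminus L^2$; the strengthened hypothesis $\mu>4$ is exactly what is needed to make the extra terms involving $\<\wtilde V\phi,\cdot\>$ convergent and to preserve the $|t|^{-1}$ rate. I expect this low-energy expansion to be the technical heart of the proof.

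Granted \eqref{proposition_generalization_1_1}, the bound \eqref{proposition_generalization_1_2} is a standard consequence: setting $U(t):=e^{-it\wtilde H}P_{\mathrm{ac}}(\wtilde H)$, one has $\norm{U(t)}_{L^2\to L^2}\le 1$ by the spectral theorem and $\norm{U(t)U(s)^*}_{L^1\to L^\infty}\le C|t-s|^{-1}$ from \eqref{proposition_generalization_1_1}, so \cite[Theorem 10.1]{KeTa} yields the non-endpoint Strichartz bound together with the Lorentz refinement on the spatial side, exactly in the form stated in \eqref{proof_1} of the excerpt; this gives \eqref{proposition_generalization_1_2} for all admissible pairs with $p>2$.
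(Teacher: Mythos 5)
Your proposal is correct and takes essentially the same route as the paper: the paper's proof of \eqref{proposition_generalization_1_1} is a direct citation of Schlag \cite{Sch} (under {\rm (H1)}) and Erdogan--Green \cite{ErGr} (under {\rm (H2)}), and the derivation of \eqref{proposition_generalization_1_2} is exactly the Keel--Tao argument \cite[Theorem 10.1]{KeTa} you invoke. What you have written out is an outline of the argument those cited papers carry out (Stone's formula, high/low energy split, Born expansion at high energy, Feshbach/Grushin reduction and low-energy resolvent expansion accounting for the $\log$ singularity), so there is nothing to add beyond noting that the paper simply treats the dispersive estimate as known.
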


%proof
\begin{proof}
The dispersive estimate \eqref{proposition_generalization_1_1} is due to \cite{Sch,ErGr}. Strichartz estimates \eqref{proposition_generalization_1_2} follow from the dispersive estimate \eqref{proposition_generalization_1_1} and the Keel-Tao theorem \cite[Theorem 10.1]{KeTa}. 
\end{proof}

%proof
\begin{proof}[Proof of Theorem \ref{theorem_generalization_1}]
Let us first consider the homogeneous estimates. %By Lemma \ref{lemma_generalization_1}, $$\int_0^t e^{-i(t-s)H_V}(\Id-\mathbb P_{\mathrm{pp}}) F(s)ds=\int_0^t (\Id-\mathbb P_{\mathrm{pp}})e^{-i(t-s)H_V}(\Id-\mathbb P_{\mathrm{pp}}) F(s)ds.$$
If we decompose $$e^{-itH_V}(\Id-\mathbb P_{\mathrm{pp}})\psi=e^{-itH_V}\mathbb P_{\mathrm{ac}}\psi+e^{-itH_V}P_n^\perp\psi,$$ then the desired estimate for $e^{-itH_V}P^\perp \psi$ follows from Proposition \ref{proposition_4} and complex interpolation. 

Let $(p,q)$ be an $n$-dimensional non-endpoint admissible pair, {\it i.e.}, $2/p+n/q=n/2$ and $p>2$. Take $q_2\ge2$ such that $(p,q_2)$ is a two-dimensional admissible pair, {\it i.e.}, $2/p+2/q_2=1$. Using \eqref{lemma_generalization_1_0} and \eqref{lemma_generalization_1_1}, we have
$$
e^{-itH_V}\mathbb P_{\mathrm{ac}}\psi=U^*P_2e^{-it\wtilde H}P_{\mathrm{ac}}(\wtilde H)P_2U P_n\psi .
$$
Setting $v(t)=e^{-it\wtilde H}P_{\mathrm{ac}}(\wtilde H)P_2U P_n\psi$, this formula and H\"older's inequality yield
\begin{align*}
\norm{e^{-itH_V}\mathbb P_{\mathrm{ac}}\psi}_{L^p(\R;L^q(\R^n))}
&\le C\norm{|x|^{-(n-2)\left(\frac12-\frac1q\right)}v}_{L^p(\R;L^q(\R^2))}\\
&\le C\norm{|x|^{-(n-2)\left(\frac12-\frac1q\right)}}_{L^{s,\infty}(\R^2)}\norm{v}_{L^p(\R;L^{q_2,q}(\R^2))},
\end{align*}
where $s$ is given by the identity $1/q=1/s+1/q_2$. Then, since
$$
\frac2s=\frac2q-\frac{2}{q_2}=\frac n2-\frac np-1+\frac2p=(n-2)\Big(\frac12-\frac1p\Big)
$$
and $|x|^{-2/s}\in L^{s,\infty}(\R^2)$, $\norm{|x|^{-(n-2)\left(\frac12-\frac1q\right)}}_{L^{s,\infty}(\R^2)}=\norm{|x|^{-2/s}}_{L^{s,\infty}(\R^2)}$ is finite. Furthermore, taking the continuous embedding $L^{q_2,2}\subset L^{q_2,q}$ (note that $q\ge2$) into account, \eqref{proposition_generalization_1_2} shows
$$
\norm{v}_{L^p(\R;L^{q_2,q}(\R^2))}\le C\norm{P_2U P_n\psi}_{L^2(\R^2)}\le C\norm{\psi}_{L^2(\R^n)},
$$
which completes the proof of the homogeneous estimates. 

Having Lemma \ref{lemma_generalization_1} in mind, we see that the inhomogeneous estimates follow from the homogeneous estimates and Christ-Kiselev's lemma. 
\end{proof}

%proof\begin{proof}[Proof of Theorem \ref{theorem_generalization_1} (2)]Since $\wtilde V\ge0$, there is no negative eigenvalue of $\wtilde H$ and hence $\mathbb P_{\mathrm{pp}}=0$. Suppose that there exists $f\in L^p(\R^2)$ with some $2\le p\le\infty$ such that $\wtilde Hf=0$ in the distribution sense. Let us first assume $\wtilde V>0$ and $\mu>4$. Then zero energy is not an eigenvalue of $H_V$. Indeed, if $f\in L^2$ solves $H_V=f$ then $\norm{\wtilde V^{1/2}f}_{L^2}=0$ Recall that, in the two-dimensional case, $-\Delta_{\R^2}$ has only an s-wave resonance at zero energy and the corresponding resonant states are the constant functions.  Then we shall show that $\widetilde H$ has at worst \end{proof}

\appendix

\section{A uniform resolvent estimate}
\label{appendix_A}
In what follows we assume $n\ge3$ and frequently use the notation $r=|x|$ as well as $\partial_r=|x|^{-1}x\cdot\nabla$. 
This appendix is devoted to the proof of \eqref{proposition_5_2}, namely, we shall show the following.

%theorem
\begin{theorem}	
\label{theorem_A}
Let $n\ge3$ and $H_V$ be as in Proposition \ref{proposition_4}. Then \eqref{proposition_5_2} holds. \end{theorem}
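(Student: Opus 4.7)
My plan is to prove the uniform weighted resolvent estimate by a Morawetz-type multiplier argument, the guiding observation being that on the non-radial subspace $P^\perp L^2$ the form of $H_V$ is equivalent to the Dirichlet form, so $H_V$ there behaves essentially like the free Laplacian and one can adapt the multiplier proof used in the subcritical case.

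Via the spherical harmonic decomposition $L^2(\R^n)=\bigoplus_{\ell\ge 0}L^2(\R_+, r^{n-1}dr)\otimes \mathcal{H}_\ell$, the projection $P^\perp$ selects $\ell\ge 1$, on which $-\Delta_{\Sphere^{n-1}}\ge n-1$. Hence the sharp Hardy constant improves from $(n-2)^2/4$ to $(n-2)^2/4+(n-1)=n^2/4$, giving the improved Hardy inequality $\tfrac{n^2}{4}\||x|^{-1}P^\perp u\|_{L^2}^2\le\|\nabla P^\perp u\|_{L^2}^2$. Together with Assumption \ref{assumption_1}(2), which yields $\<Vu,u\>\ge -(\tfrac{(n-1)^2}{4}-\nu)\||x|^{-1}u\|_{L^2}^2$, this forces the form equivalence $\<(H_V+M)u,u\>\sim \|u\|_{H^1}^2$ on $P^\perp L^2$ for some $M>0$. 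A density argument and a routine spectral reduction for $z$ away from $\sigma(H_V)$ reduce the problem to the a priori bound $\||x|^{-1}u\|_{L^2}\le C\||x|f\|_{L^2}$ for $u=P^\perp(H_V-z)^{-1}P^\perp f$ with $f\in P^\perp C_0^\infty(\R^n)$ and $z$ in a compact subset of $\C\setminus\R$ bounded away from $-\infty$.

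I would then apply the anti-symmetric Morawetz multiplier $Mu=(\partial_r+\tfrac{n-1}{2r})u$. A direct polar-coordinate computation yields
$$[H_V,M]=-\frac{2\Delta_{\Sphere^{n-1}}}{r^3}+\frac{(n-1)(n-3)}{2r^3}-V'(r),$$
whence the Morawetz identity
$$\tfrac12\<[H_V,M]u,u\>=\Re\<f,Mu\>-\Im z\cdot\Im\<u,Mu\>.$$
On $P^\perp L^2$ the angular term contributes at least $2(n-1)/r^3$, so the commutator is $\ge (n^2-1)/(2r^3)-V'(r)$. To absorb the indefinite $-V'$ term, Assumption \ref{assumption_1}(2) gives both $-V-rV'\ge(-(n-1)^2/4+\nu)/r^2$ (dividing by $r$ yields $-V'\ge V/r+(-(n-1)^2/4+\nu)/r^3$) and $V\ge(-(n-1)^2/4+\nu)/r^2$; combining these estimates and invoking the identity $(n^2-1)-(n-1)^2=2(n-1)$ produces the clean positivity $\tfrac12\<[H_V,M]u,u\>\ge\tfrac{n-1+2\nu}{2}\||x|^{-3/2}u\|_{L^2}^2$. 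On the RHS of the Morawetz identity, the term $\Im z\cdot\Im\<u,Mu\>$ is controlled by the energy identity $|\Im z|\|u\|_{L^2}^2=|\Im\<f,u\>|$ together with the form equivalence, while $\Re\<f,Mu\>$ is estimated via the anti-symmetry $\<f,Mu\>=-\<Mf,u\>$ and a Cauchy--Schwarz with the weight $|x|$.

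This Morawetz computation produces the $|x|^{-3/2}$-weighted bound; to upgrade to the critical $|x|^{-1}$-weighted estimate I would follow the approach of \cite{BPST2,BoMi} and repeat the argument with the one-parameter family of multipliers $M_a=a(r)(\partial_r+\tfrac{n-1}{2r})+\tfrac12 a'(r)$ for a suitably chosen bounded nonnegative profile $a$ (e.g.~$a(r)=r/(r+R)$), optimizing over the scale $R$ to arrive at $\||x|^{-1}u\|_{L^2}^2\lesssim \||x|f\|_{L^2}\||x|^{-1}u\|_{L^2}$ uniformly in $z$, whence the claim. The main obstacle will be two-fold: first, the tight algebraic matching in the commutator computation, where the three constants $(n-2)^2/4$, $(n-1)^2/4$, and $n^2/4$ enter in precise balance and only the positive gap $\nu$ survives as a coefficient of the remainder; second, the weight-upgrade step, which requires a scale-dependent multiplier and a careful integration by parts uniform in $z$ and in the regularization parameters used to make $M_a$ well-defined as an operator identity.
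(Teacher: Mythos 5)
Your plan correctly isolates the key structural fact that makes the non-radial estimate possible---on $P^\perp L^2$ the angular Laplacian satisfies $-\Delta_{\Sphere^{n-1}}\ge n-1$, which upgrades the Hardy constant from $(n-2)^2/4$ to $n^2/4$ and makes the form of $H_V$ on that subspace equivalent to the Dirichlet form---and your commutator computation with $M=\partial_r+\tfrac{n-1}{2r}$ is correct: using both inequalities in Assumption~\ref{assumption_1}(2) does yield $[H_V,M]\ge(n-1+2\nu)/r^3$ on $P^\perp L^2$. The paper's proof is in the same multiplier family (BVZ/BoMi), so the spirit is right; but there are two genuine gaps that you would run into.

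First, the reduction to ``$z$ in a compact subset of $\C\setminus\R$'' is not legitimate: \eqref{proposition_5_2} is a supremum over \emph{all} non-real $z$, and the hard regimes are precisely $\Im z\to 0^+$ (approach to the essential spectrum) and $\Re z\to+\infty$ (high energy); no spectral-gap reduction can remove them. Second, and this is the central missing idea, your control of the right-hand side of the Morawetz identity is not uniform in $\lambda=\Re z$. The term $\Im z\cdot\Im\<u,Mu\>$ is not tamed by the energy identity alone: that identity gives $\ep\norm{u}_{L^2}^2\le\norm{rf}_{L^2}\norm{r^{-1}u}_{L^2}$, but $|\Im\<u,Mu\>|=|\Im\<u,\partial_ru\>|\le\norm{u}\norm{\nabla u}$ and the form equivalence only gives $\norm{\nabla u}^2\lesssim|\<f,u\>|+\lambda\norm{u}^2$, so the factor $\norm{\nabla u}$ blows up like $\lambda^{1/2}\norm{u}$ as $\lambda\to\infty$. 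Likewise the anti-symmetry trick $\<f,Mu\>=-\<Mf,u\>$ costs a derivative of $f$, which is not admissible since the estimate must hold for arbitrary $f\in L^2(|x|^2dx)$. The paper's Appendix~\ref{appendix_A} resolves exactly this difficulty by the gauge transformation $v_\lambda=e^{-i\lambda^{1/2}r}u$ in the regime $0<\ep<\lambda$ (Lemma~\ref{lemma_appendix_C_2}), which is obtained by taking the precise linear combination $\eqref{proof_C_6}-\eqref{proof_C_2}-2\lambda^{1/2}\times\eqref{proof_C_5}+\ep\lambda^{-1/2}\times\eqref{proof_C_4}$ of the five identities in Lemma~\ref{lemma_appendix_C_1}: the phase factor converts the ill-signed $-\lambda|u|^2$ contribution into first-order terms in $v_\lambda$ that can be absorbed, giving the left-hand side a clean lower bound $\nu\norm{r^{-1}u}^2+\nu\ep\lambda^{-1/2}\norm{r^{-1/2}u}^2$ with a right-hand side controlled purely by $\norm{rf}\norm{r^{-1}u}$ and small multiples of $\norm{r^{-1}u}^2$. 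Your bounded-profile family $M_a$ with $a(r)=r/(r+R)$ and optimization over $R$ might eventually work, but as written it has no mechanism replacing the gauge, and until that is supplied the argument does not close uniformly in $\lambda$.
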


The proof essentially relies on the multiplier method of \cite[Section 2]{BVZ}. We follow  closely \cite[Appendix B]{BoMi}). %However, these papers only considered the subcritical case in the sense that $-\Delta+V\ge-\delta\Delta$  with some positive $\delta$ which is not satisfied in the present case. Thus we give details of the proof.  
Let $f\in C_0^\infty(\R^n)$ be non-radial in the sense that $f=P^\perp f$ and $\lambda+i\ep\in \C\setminus\R$ with $\lambda,\ep\in\R$. Let $u=(H_V-\lambda-i\ep)^{-1}f\in D(H_V)$ be the solution to the Helmholtz equation
\begin{align}
\label{Helmholtz}
(H_V-\lambda- i\ep)u=f,
\end{align} 
Note that $u$ also satisfies $u=P^\perp u$ since $V$ is radially symmetric. Also note that it suffices to consider the case $\ep\ge0$ only, the proof for the case $\ep<0$ being analogous. We first prepare three key lemmas:

%{lemma}
\begin{lemma}
\label{lemma_Hardy} {\rm(1)} Improved Hardy's inequality for non-radial functions: 
$$
\frac{n^2}{4}\norm{r^{-1}P^\perp f}_{L^2(\R^n)}^2\le \norm{\nabla f}_{L^2(\R^n)}^2,\quad f\in C_0^\infty(\R^n). 
$$
{\rm (2)} Weighted Hardy's inequality:
$$
\frac{(n-1)^2}{4}\norm{r^{-1/2}f}_{L^2(\R^n)}^2\le \norm{r^{1/2}\nabla f}_{L^2(\R^n)}^2,\quad f\in C_0^\infty(\R^n). 
$$
\end{lemma}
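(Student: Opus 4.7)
My plan is to prove (1) by spherical harmonic decomposition and (2) by a direct integration-by-parts identity. Both are standard in principle, but the constants $n^2/4$ and $(n-1)^2/4$ have to be extracted carefully.

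For (1), I would expand $f\in C_0^\infty(\R^n)$ in spherical harmonics, $f(r\omega)=\sum_{\ell\ge 0, k}f_{\ell,k}(r)Y_{\ell,k}(\omega)$, where $\{Y_{\ell,k}\}_k$ is an orthonormal basis of the eigenspace of $-\Delta_{\mathbb S^{n-1}}$ with eigenvalue $\ell(\ell+n-2)$. The assumption $f=P^\perp f$ kills the $\ell=0$ component. Using the polar decomposition $-\Delta=-\partial_r^2-\frac{n-1}{r}\partial_r-r^{-2}\Delta_{\mathbb S^{n-1}}$ together with orthonormality of the $Y_{\ell,k}$, both sides of the inequality reduce to sums over $\ell\ge 1$ of one-dimensional integrals in the measure $r^{n-1}\,dr$. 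I would then apply the sharp one-dimensional Hardy inequality
$$\int_0^\infty |g'(r)|^2\, r^{n-1}\,dr\;\ge\;\frac{(n-2)^2}{4}\int_0^\infty |g(r)|^2\, r^{n-3}\,dr$$
to each radial profile $f_{\ell,k}$. Adding the angular contribution $\ell(\ell+n-2)r^{-2}$ on the left gives the effective Hardy constant $\frac{(n-2)^2}{4}+\ell(\ell+n-2)$, which for $\ell\ge 1$ is minimized at $\ell=1$ with value $\frac{(n-2)^2}{4}+(n-1)=\frac{n^2}{4}$.

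For (2), I would start from the identity $\int \mathrm{div}\bigl(x|x|^{-1}|f|^2\bigr)\,dx=0$, valid for $f\in C_0^\infty(\R^n)$ by the divergence theorem (since $|x|^{-1}\in L^1_{\mathrm{loc}}$ for $n\ge 3$). Expanding via the product rule, $\mathrm{div}(x|x|^{-1})=(n-1)|x|^{-1}$ and $x|x|^{-1}\cdot\nabla|f|^2=2\Re(\hat x\cdot\overline{f}\,\nabla f)$, so
$$(n-1)\int |x|^{-1}|f|^2\,dx=-2\Re\int \hat x\cdot\overline{f}\,\nabla f\,dx.$$
Cauchy--Schwarz bounds the right-hand side by $2\||x|^{-1/2}f\|_{L^2}\||x|^{1/2}\nabla f\|_{L^2}$. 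Dividing through by $\||x|^{-1/2}f\|_{L^2}$ (assumed nonzero, else there is nothing to prove) and squaring yields the claimed inequality with the sharp constant $(n-1)^2/4$.

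I do not anticipate a real obstacle; the main point is organisational. For (1) one must justify the interchange of sum and integral in the spherical harmonic expansion, which is routine for $f\in C_0^\infty(\R^n)$ since $f_{\ell,k}(r)=O(r^\ell)$ near the origin, ensuring all boundary terms in the 1D Hardy inequality vanish and the series converges absolutely. For (2) the only delicate issue is integrating by parts against the mildly singular weight $x|x|^{-1}$, but this is harmless in dimension $n\ge 3$. The density of $C_0^\infty(\R^n)$ in the natural completion spaces then extends both inequalities to the classes in which they are actually applied in the main text.
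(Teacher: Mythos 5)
Your proof is correct. Note that the paper itself does not prove this lemma: it refers to Ekholm--Frank (Lemma 2.4) for part (1) and Metafune--Sobajima--Spina (Proposition 8.1) for part (2), where, as the paper says, ``simple proofs can be found.'' Your self-contained argument recovers exactly the kind of simple proof those references contain.

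For (1), the spherical-harmonic decomposition is the standard route: since $f=P^\perp f$ kills $\ell=0$, the effective Hardy constant becomes $\tfrac{(n-2)^2}{4}+\ell(\ell+n-2)$, minimized at $\ell=1$ to give $\tfrac{n^2}{4}$; this is precisely the mechanism in the Ekholm--Frank lemma. For (2), the divergence identity
\begin{equation*}
(n-1)\int |x|^{-1}|f|^2\,dx = -2\Re\int \frac{x}{|x|}\cdot \overline f\,\nabla f\,dx
\end{equation*}
followed by Cauchy--Schwarz and cancellation is the textbook derivation of the weighted Hardy inequality with constant $\tfrac{(n-1)^2}{4}$. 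Two minor remarks: the justification for discarding the inner boundary term in (2) is not really $|x|^{-1}\in L^1_{\mathrm{loc}}$ per se but the fact that the flux of $\hat x\,|f|^2$ through the sphere $\{|x|=\ep\}$ is $O(\ep^{n-1})\to 0$; and in (1), for the one-dimensional Hardy inequality to apply to each radial mode $f_{\ell,k}$, the needed vanishing at the origin does follow from $f_{\ell,k}(r)=O(r^\ell)$ with $\ell\ge1$, as you point out. Both of these are cosmetic, and the constants you extract are the sharp ones the lemma asserts.
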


%proof
\begin{proof}
We refer to \cite[Lemma 2.4]{EkFr} for (1) and \cite[Proposition 8.1]{MSS} for (2) in which simple proofs can be found. 
\end{proof}

%remark
\begin{remark}
\label{remark_appendix}
It is seen from Lemma \ref{lemma_Hardy} (1) and Assumption \ref{assumption_1} (2) that $\<H_VP^\perp f,f\>\sim \norm{\nabla f}_{L^2}^2$ which %,  together with the density of $C_0^\infty(\R^n)$ in $D(\overline Q_H)$, 
shows $P^\perp D(\overline Q_H)\subset \H^1$, where $\H^1$ is the $L^2$-Sobolev space of order $1$.  In particular, $u=(H_V-\lambda-i\ep)^{-1}P^\perp f$ belongs to $\H^1$. This observation is useful to justify the computations in the proof of the next lemma. 
\end{remark}

%{lemma}
\begin{lemma}
\label{lemma_appendix_C_1}
The following five identities hold: 
\begin{align}
\label{proof_C_2}
\int\Big(|\nabla u|^2+V|u|^2-\lambda |u|^2\Big)dx
&=\Re\int f\overline{u}dx,\\
\label{proof_C_3}
-\ep\int|u|^2dx
&=\Im \int f\overline{u}dx,\\
\label{proof_C_4}
\int \Big(r|\nabla u|^2-\lambda r|u|^2+rV|u|^2+\Re(\overline u\partial_ru)\Big)dx
&=\Re \int rf\overline udx,\\
\label{proof_C_5}
\int\Big(-\ep r|u|^2+\Im(\overline u\partial_ru)\Big)dx
&=\Im \int rf\overline udx,\\
\label{proof_C_6}
\int\Big(2|\nabla u|^2-(r\partial_rV)|u|^2-2\ep \Im(\overline ur\partial_ru)\Big)dx
&=\Re \int f(2r\partial_r \overline u+n\overline u)dx. 
\end{align}
In particular, we have $r^{1/2}\nabla u\in L^2$, $r^{1/2}u\in L^2$ and $r^{1/2}u\in \H^1$. 
\end{lemma}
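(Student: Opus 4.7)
All five identities come from pairing the Helmholtz equation \eqref{Helmholtz} with an appropriate test multiplier and integrating by parts. The plan is to first pretend that $u$ has enough weighted decay to perform every computation formally, derive the five identities, then come back and justify the manipulations by a cutoff argument based on Remark \ref{remark_appendix} and Lemma \ref{lemma_Hardy}.

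For \eqref{proof_C_2} and \eqref{proof_C_3}, I pair \eqref{Helmholtz} with $\overline u$. Since $u\in P^\perp D(\overline Q_{H_V})\subset \H^1$ by Remark \ref{remark_appendix}, the Green--type identity $\int(-\Delta u)\overline u\,dx=\int |\nabla u|^2 dx$ is unproblematic and yields
$\int(|\nabla u|^2+V|u|^2-\lambda|u|^2-i\ep|u|^2)dx=\int f\overline u\,dx$;
splitting into real and imaginary parts gives \eqref{proof_C_2} and \eqref{proof_C_3}. For \eqref{proof_C_4} and \eqref{proof_C_5} I pair with $r\overline u$; the only new term is $\int\nabla u\cdot\nabla r\,\overline u\,dx=\int\overline u\,\partial_r u\,dx$, which comes out of $\int\nabla u\cdot\nabla(r\overline u)\,dx$ together with $\int r|\nabla u|^2\,dx$. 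Taking real and imaginary parts then produces \eqref{proof_C_4} and \eqref{proof_C_5}.

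The Morawetz identity \eqref{proof_C_6} is the substantive one. I pair \eqref{Helmholtz} with the (twice the) dilation-generator multiplier $2r\partial_r\overline u+n\overline u$ and take the real part. For the Laplacian piece, direct integration by parts combined with $\frac12 r\partial_r|\nabla u|^2=\frac12 x\cdot\nabla|\nabla u|^2$ and $\int x\cdot\nabla|\nabla u|^2\,dx=-n\int|\nabla u|^2\,dx$ gives
$\Re\int(-\Delta u)(2r\partial_r\overline u+n\overline u)dx=2\int|\nabla u|^2\,dx$,
which is the $[-\Delta,r\partial_r]=-2\Delta$ commutator identity in disguise. For the potential piece, an integration by parts in $\int Vr\partial_r|u|^2\,dx=-\int\div(Vx)|u|^2\,dx$ produces $-n\int V|u|^2\,dx-\int(r\partial_r V)|u|^2\,dx$, and the $n V|u|^2$ contribution cancels against $n\int V|u|^2\,dx$ from the zeroth order multiplier, leaving $-\int(r\partial_r V)|u|^2\,dx$. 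The $-\lambda u$ piece contributes $-n\lambda\int|u|^2 dx+n\lambda\int|u|^2 dx=0$ after a similar cancellation, and the $-i\ep u$ piece yields exactly $-2\ep\,\Im\int r\overline u\,\partial_r u\,dx$ (the $n\overline u$ part being purely imaginary, hence dropping out). Summing gives \eqref{proof_C_6}.

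The main obstacle is rigor, since $u\in\H^1$ but not a priori in a weighted space. I would justify everything by inserting a cutoff $\chi_R(x)=\chi(|x|/R)$ with $\chi\in C_0^\infty([0,2))$, $\chi\equiv1$ on $[0,1]$, into each multiplier (so one pairs with $\chi_R\overline u$, $r\chi_R\overline u$, and $\chi_R(2r\partial_r\overline u+n\overline u)$). The commutator error terms contain derivatives $\chi_R'$ localized on $|x|\sim R$ and are controlled by $R^{-1}\norm{\nabla u}_{L^2}\norm{u}_{L^2}$ and similar; they vanish as $R\to\infty$. The finiteness of the surviving weighted integrals is obtained from Assumption \ref{assumption_1} (2), which ensures the quadratic form $\int(|\nabla u|^2+V|u|^2)dx$ dominates $\nu\int r^{-2}|P^\perp u|^2 dx$ via Lemma \ref{lemma_Hardy} (1), combined with the identity \eqref{proof_C_4} (in its cutoff version) and weighted Hardy \eqref{lemma_Hardy} (2): taking the leading term $\int r|\nabla u|^2 dx$ and using $\int r^{-1}|u|^2 dx\le C\int r|\nabla u|^2 dx$ to absorb both $\int rV|u|^2 dx$ (since $r^2V\in L^\infty$) and the cross term $\Re\int\overline u\partial_r u\,dx$, one obtains $r^{1/2}\nabla u\in L^2$ and hence $r^{1/2}u\in L^2$, so that $r^{1/2}u\in \H^1$. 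Once these weighted integrabilities are in hand, passing to the limit $R\to\infty$ in the cutoff identities yields the five stated equalities.
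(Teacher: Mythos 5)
Your derivation of the five identities uses exactly the paper's multipliers ($\overline u$, $r\overline u$, and $iA\overline u=2r\partial_r\overline u+n\overline u$), and the computations leading to \eqref{proof_C_2}--\eqref{proof_C_6}, including the commutator identity $[-\Delta,r\partial_r]=-2\Delta$ underlying \eqref{proof_C_6}, are correct and match the paper's sketch. The cutoff device $\chi_R$ to make the integration by parts rigorous is a reasonable way to fill in the detail the paper delegates to the reference.

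There is, however, a gap in your deduction of the weighted integrabilities at the end. You claim to get $r^{1/2}\nabla u\in L^2$ directly from the cutoff version of \eqref{proof_C_4} plus weighted Hardy, by ``absorbing'' the lower-order terms. But \eqref{proof_C_4} contains the term $-\lambda\int r|u|^2\,dx$, which for $\lambda>0$ has the wrong sign and cannot be absorbed by $\int r|\nabla u|^2\,dx$ via any Hardy-type inequality: to dominate it you must already know $r^{1/2}u\in L^2$. Also, the implication ``$r^{1/2}\nabla u\in L^2$ and hence $r^{1/2}u\in L^2$'' does not follow; weighted Hardy gives $r^{-1/2}u\in L^2$, not $r^{1/2}u\in L^2$. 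The correct order (which is what the paper does) is: first use the cutoff version of \eqref{proof_C_5}, where the assumption $\ep>0$ makes $\ep\int\chi_R\,r|u|^2\,dx$ equal to a quantity bounded uniformly in $R$ by $\norm{u}_{\H^1}$ and $\norm{rf}_{L^2}$, so monotone convergence yields $r^{1/2}u\in L^2$; then read off $r^{1/2}\nabla u\in L^2$ from the cutoff version of \eqref{proof_C_4}, since every other term is now finite; and finally Lemma \ref{lemma_Hardy}(2) controls $r^{-1/2}u$, which combined with $r^{1/2}\nabla u\in L^2$ gives $\nabla(r^{1/2}u)\in L^2$, i.e.\ $r^{1/2}u\in\H^1$. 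Without invoking \eqref{proof_C_5} and the sign of $\ep$, your absorption scheme does not close.
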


%proof
\begin{proof} We give an outline of the proof only and refer to \cite[Appendix B]{BoMi} for more details. Note that the condition $r^2(r\partial_r)^\ell V\in L^\infty$ with $\ell=0,1$ is enough to justify following computations. \eqref{proof_C_2} and \eqref{proof_C_3} are verified by multiplying \eqref{Helmholtz} by $\overline u$, integrating over $\R^n$ and taking  the real and imaginary parts. \eqref{proof_C_4} and \eqref{proof_C_5} follow from multiplying \eqref{Helmholtz} by $r\overline u$, integrating over $\R^n$ and taking  the real and imaginary parts. %In order to justify this computation, taking $\chi\in C_0^\infty(\R^n)$ with $\chi\equiv1$ for $|x|\le1$, we multiply \eqref{Helmholtz} by $\chi(\delta x)r\overline u$ and integrate over $\R^n$ to derive the corresponding equation for the approximation. Then, observing that $r|\nabla u|^2-\lambda r|u|^2 $ has a fixed sign and other terms satisfy a priori bounds:\begin{align}\label{proof_lemma_appendix_C_1_1}\int|rV||u|^2dx&\le  \norm{rVu}_{L^2}\norm{u}_{L^2}\le\begin{cases}C\norm{u}_{\H^1}\norm{u}_{L^2},&\ \text{if}\ n\ge3,\\\norm{r^2V}_{L^\infty}^\frac12\norm{V^\frac12u}_{L^2}\norm{u}_{L^2},&\ \text{if}\ n=2,\end{cases}\\\nonumber\int |\overline u\partial_r u|dx&\le \norm{u}_{L^2}\norm{\nabla u}_{L^2},\\\nonumber\int |rf\overline u|dx&\le \norm{rf}_{L^2}\norm{u}_{L^2},\end{align}we obtain \eqref{proof_C_4} by letting $\delta\to0$ and using the monotone convergence and dominated convergence theorems.  \eqref{proof_C_5} can be derived by applying the same argument to the imaginary part. 
In order to derive \eqref{proof_C_6}, we  multiply \eqref{Helmholtz} by $iA\overline u$ with $iA=x\cdot\nabla+\nabla\cdot x$, integrate over $\R^n$ and take the real part. Then one has
\begin{align*}
2\Re\<H_Vu,iAu\>
&=\<[H_V,iA]u,u\>
=\<(-2\Delta-r\partial_r V)u,u\>
=\int\Big(2|\nabla u|^2-(r\partial_rV)|u|^2\Big)dx,\\
\Re\<u,iAu\>
&=0,\\
2\ep\Re(-i\<u,iAu\>)
&=2\ep\Im \<u,iAu\>
=2\ep\Im \int u(r\partial_r \overline u)dx
=-2\ep\Im \int\overline u(r\partial_ru)dx,\\
2\Re\<f,iAu\>&=\Re\int f(2r\partial_r \overline u+n\overline u)dx,
 \end{align*}
and hence \eqref{proof_C_6} follows. Finally, by \eqref{proof_C_5} and \eqref{proof_C_4}, we have $r^{1/2}\nabla u,r^{1/2}u\in L^2$ which, together with Lemma \ref{lemma_Hardy} (2), imply $r^{1/2}u\in \H^1$. %, which gives\begin{align*}2\Re\<Hu,iAu\>&=\<[H,iA]u,u\>=\<(-2\Delta-(r\partial_rV)u,u\>=\int\Big(2|\nabla u|^2-(r\partial_rV)|u|^2\Big)dx,\\\Re\<u,iAu\>&=0,\\2\ep\Re(-i\<u,iAu\>)&=2\ep\Im \<u,iAu\>=2\ep\Im \int u(r\partial_r \overline u)=-2\ep\Im \int(r\partial_ru)\overline udx,\\2\Re\<f,iAu\>&=\Re\int f(2r\partial_r \overline u+n\overline u)dx, \end{align*}and hence \eqref{proof_C_6} follows. 
\end{proof}

%{lemma}
\begin{lemma}
\label{lemma_appendix_C_2}
Let $0<\ep<\lambda$ and $v_\lambda=e^{-i\lambda^{\frac12}r}u$. Then one has
\begin{equation}
\begin{aligned}
\label{proof_C_7}
&\int \Big(|\nabla v_\lambda|^2-\partial_r(rV)|v_\lambda|^2+\ep\lambda^{-\frac12}r|\nabla v_\lambda|^2+\ep\lambda^{-\frac12}rV|v_\lambda|^2\Big)dx\\
&=\Re\int\Big(-\ep\lambda^{-\frac12}\overline ue^{i\lambda^\frac12 r}\partial_rv_\lambda+(n-1)f\overline u+\ep\lambda^{-\frac12}rf\overline u+2rf\overline{e^{i\lambda^{\frac12}r}\partial_rv_\lambda}\Big)dx. 
\end{aligned}
\end{equation}
\end{lemma}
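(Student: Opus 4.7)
My plan is to derive identity~\eqref{proof_C_7} purely algebraically by forming a specific linear combination of the four identities \eqref{proof_C_2}, \eqref{proof_C_4}, \eqref{proof_C_5}, \eqref{proof_C_6} already established in Lemma~\ref{lemma_appendix_C_1}, using only the substitution $u=e^{i\lambda^{1/2}r}v_\lambda$ and no further integration by parts. All the analytic work (integrability, distributional identities, and integration by parts) has been carried out in Lemma~\ref{lemma_appendix_C_1}, so the remaining task is essentially bookkeeping.

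\textbf{Step 1 (dictionary between $u$ and $v_\lambda$).} Starting from $\nabla u = e^{i\lambda^{1/2}r}\bigl(\nabla v_\lambda + i\lambda^{1/2}\tfrac{x}{r}v_\lambda\bigr)$, I would record the pointwise identities
\begin{align*}
|\nabla u|^2 &= |\nabla v_\lambda|^2 + \lambda|v_\lambda|^2 + 2\lambda^{1/2}\Im(\overline{v_\lambda}\partial_r v_\lambda),\qquad |u|^2 = |v_\lambda|^2,\\
\Re(\overline u\partial_r u) &= \Re(\overline{v_\lambda}\partial_r v_\lambda),\qquad \Im(\overline u\partial_r u) = \Im(\overline{v_\lambda}\partial_r v_\lambda) + \lambda^{1/2}|v_\lambda|^2.
\end{align*}
I would also use $e^{i\lambda^{1/2}r}\partial_r v_\lambda = \partial_r u - i\lambda^{1/2}u$ and its conjugate to rewrite the two mixed terms on the right-hand side of \eqref{proof_C_7} as
\begin{align*}
\Re\bigl(-\epsilon\lambda^{-1/2}\bar u\,e^{i\lambda^{1/2}r}\partial_r v_\lambda\bigr) &= -\epsilon\lambda^{-1/2}\Re(\bar u\,\partial_r u),\\
\Re\bigl(2rf\,\overline{e^{i\lambda^{1/2}r}\partial_r v_\lambda}\bigr) &= 2r\Re(f\,\partial_r\bar u) - 2\lambda^{1/2}\Im(rf\bar u),
\end{align*}
the pure-imaginary cross contributions being killed by the real part.

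\textbf{Step 2 (the combination).} With everything written in $u$-language, I would verify the scalar identity
\[
\eqref{proof_C_7} \;=\; \eqref{proof_C_6} \;-\; \eqref{proof_C_2} \;+\; \epsilon\lambda^{-1/2}\cdot\eqref{proof_C_4} \;-\; 2\lambda^{1/2}\cdot\eqref{proof_C_5}.
\]
Substituting the dictionary into the sum on the right, the left-hand-side contributions add up to exactly $\int\bigl(|\nabla v_\lambda|^2-\partial_r(rV)|v_\lambda|^2+\epsilon\lambda^{-1/2}r|\nabla v_\lambda|^2+\epsilon\lambda^{-1/2}rV|v_\lambda|^2\bigr)dx$, apart from an extra $\epsilon\lambda^{-1/2}\int\Re(\bar u\partial_r u)dx$ coming from the $\Re(\bar u\partial_r u)$ term in \eqref{proof_C_4}; simultaneously, the right-hand-side contributions add up to the claimed right-hand side of \eqref{proof_C_7} apart from the \emph{same} extra term $\epsilon\lambda^{-1/2}\int\Re(\bar u\partial_r u)dx$, which arises precisely from the $-\epsilon\lambda^{-1/2}\Re(\bar u e^{i\lambda^{1/2}r}\partial_r v_\lambda)$ piece. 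Thus the spurious contributions cancel and the identity drops out.

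\textbf{Step 3 (expected obstacle).} There is no real analytic difficulty: no new integration by parts is needed, and integrability of every term is guaranteed by the assertions $r^{1/2}\nabla u\in L^2$, $r^{1/2}u\in L^2$ and $r^{1/2}u\in H^1$ established at the end of Lemma~\ref{lemma_appendix_C_1}, together with the standing hypothesis $r^2(r\partial_r)^\ell V\in L^\infty$ for $\ell=0,1$. The only thing to guard against is a sign error in the dictionary: keeping straight that $\Im(\bar u\partial_r u)$ and $\Im(\overline{v_\lambda}\partial_r v_\lambda)$ differ by the \emph{additive} term $\lambda^{1/2}|v_\lambda|^2$ is what generates the $\epsilon\lambda^{1/2}r|v_\lambda|^2$ contributions on the left of \eqref{proof_C_7}, and miscounting them is the easiest way to end up with a wrong coefficient. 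Hence the main (and only) obstacle is careful bookkeeping of the cross terms produced by the oscillating factor $e^{i\lambda^{1/2}r}$.
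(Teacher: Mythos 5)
Your proposal is correct and takes exactly the same route as the paper, which states the formula is obtained by computing $\eqref{proof_C_6}-\eqref{proof_C_2}-2\lambda^{1/2}\times\eqref{proof_C_5}+\epsilon\lambda^{-1/2}\times\eqref{proof_C_4}$; your linear combination is identical, and your dictionary between $u$ and $v_\lambda$ (including the cancellation of the extra $\epsilon\lambda^{-1/2}\Re(\bar u\partial_r u)$ terms on both sides) is the bookkeeping the paper leaves to the reader by citing \cite[Appendix B]{BoMi}.
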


%proof
\begin{proof}
%At first observe from an identity $|z-iw|^2=|z|^2+|w|^2-2\Im(z\overline{w})$ for $z,w\in\C$ that\begin{align}\label{proof_C_8}|\nabla v_\lambda|^2=|\nabla u-i\lambda^{\frac12}|x|^{-1}x u|^2=|\nabla u|^2+\lambda |u|^2-2\lambda^{\frac12}\Im[(\partial_r u)\overline u]. \end{align}
This formula is derived by computing
$\eqref{proof_C_6}-\eqref{proof_C_2}-2\lambda^{\frac12}\times\eqref{proof_C_5}+\ep\lambda^{-\frac12}\times\eqref{proof_C_4}$ (see \cite[Appendix B]{BoMi} for more details). % as follows. First, taking \eqref{proof_C_8} into account, $\eqref{proof_C_6}-\eqref{proof_C_2}$ reads\begin{equation}\begin{aligned}\label{proof_C_8_1}\int\Big(|\nabla v_\lambda|^2+2\lambda^{\frac12}\Im[(\partial_r u)\overline u]-(\partial_r (rV))|u|^2-2\ep\Im [(r\partial_ru)\overline{u}]\Big)dx\\=\Re\int\Big(f(2r\partial_r\overline u+(n-1)\overline u)\Big)dx. \end{aligned}\end{equation}To eliminate $2\lambda^{\frac12}\Im[(\partial_r u)\overline u]$, we subtract $2\lambda^{\frac12}\times\eqref{proof_C_5}$ from \eqref{proof_C_8_1} to obtain\begin{equation}\begin{aligned}\label{proof_C_9}\int\Big(|\nabla v_\lambda|^2-(\partial_r (rV))|u|^2-2\ep\Im [(r\partial_ru)\overline{u}]+2\ep\lambda^{\frac12} r|u|^2\Big)dx\\=\int\Big(\Re[f(2r\partial_r\overline u+(n-1)\overline u)]-2\lambda^{\frac12}\Im  (rf\overline u)\Big)dx.\end{aligned}\end{equation}Since $-\Im  (rf\overline u)=\Re(rf\overline{-iu})$ and $\partial_r u-i\lambda^{\frac12} u=e^{i\lambda^{\frac12} r}\partial_r v_\lambda$, the right hand side of \eqref{proof_C_9} reads\begin{align}\label{proof_C_9_1}\Re[f(2r\partial_r\overline u+(n-1)\overline u)]-2\lambda^{\frac12}\Im  (rf\overline u)=\Re\Big(2rf\overline{e^{i\lambda^{\frac12} r}\partial_r v_\lambda}+(n-1)f\overline u\Big). \end{align}Using \eqref{proof_C_8} we next compute\begin{equation}\begin{aligned}\label{proof_C_9_2}-2\ep\Im [(r\partial_ru)\overline{u}]+2\ep\lambda^{\frac12} r|u|^2+\ep\lambda^{-\frac12}(r|\nabla u|^2-\lambda r|u|^2)=\ep\lambda^{-\frac12}r|\nabla v_\lambda|^2.\end{aligned}\end{equation}It is then seen from \eqref{proof_C_9_1} and \eqref{proof_C_9_2} that $\eqref{proof_C_9} +\ep\lambda^{-\frac12}\times\eqref{proof_C_4}$ reads\begin{equation}\begin{aligned}\label{proof_C_10}\int\Big(|\nabla v_\lambda|^2-(\partial_r (rV))|u|^2+\ep\lambda^{-\frac12}r|\nabla v_\lambda|^2+\ep\lambda^{-\frac12}rV|u|^2+\ep\lambda^{-\frac12}\Re(\overline u\partial_ru)\Big)dx\\=\Re\int\Big(2rf\overline{e^{i\lambda^{\frac12} r}\partial_r v_\lambda}+(n-1)f\overline u+\ep\lambda^{-\frac12}rf\overline u\Big). \end{aligned}\end{equation}Finally, since $\Re(\overline u\partial_ru)=\Re[\overline u(\partial_r u-i\lambda^{\frac12} u)]=\Re(\overline ue^{i\lambda r}v_\lambda)$, \eqref{proof_C_10} is equivalent to \eqref{proof_C_7}. 
\end{proof}

%proof
\begin{proof}[Proof of Theorem \ref{theorem_A}]
%Let $f\in C_0^\infty(\R^n\setminus\{0\})$ be non-radial. Note that both $u$ and $v_\lambda=e^{-i\sqrt{\lambda}r}u$ are also non-radial since $V$ is radial (and $H_V$ thus is rotationally invariant). 
It is sufficient to show that
\begin{align}
\label{proof_C_10_0}
\norm{r^{-1}u}_{L^2}\le C\norm{rf}_{L^2}
\end{align}
holds uniformly in $\lambda\in\R$ and $\ep>0$. When $\ep\ge\lambda$, \eqref{proof_C_2} and \eqref{proof_C_3} imply
\begin{align}
\label{proof_C_10_1}
\int\Big(|\nabla u|^2+V|u|^2\Big)dx\le (1+\lambda_+/\ep)\int |fu|dx\le \delta_1\norm{r^{-1}u}_{L^2}^2+\delta_1^{-1}\norm{rf}_{L^2}^2
\end{align}
for any $\delta_1>0$, where $\lambda_+=\max\{0,\lambda\}$. Taking the fact $u=P^\perp u\in \H^1$ into account, we learn by Assumption \ref{assumption_1} (2) and Lemma \ref{lemma_Hardy} (1) that
$$
\int\Big(|\nabla u|^2+V|u|^2\Big)dx\ge \int\Big(|\nabla u|^2+(-n^2/4+\ep)r^{-2}|u|^2\Big)dx\ge \nu\norm{r^{-1}u}_{L^2}^2. 
$$
Taking $\delta_1<\nu$ we obtain \eqref{proof_C_10_0}.  

Next, let $\ep<\lambda$. By Assumption \ref{assumption_1} (2) and Lemma \ref{lemma_Hardy},  the left hand side of \eqref{proof_C_7} satisfies 
\begin{align*}
&\int \Big(|\nabla v_\lambda|^2-\partial_r(rV)|v_\lambda|^2+\ep\lambda^{-\frac12}r|\nabla v_\lambda|^2+\ep\lambda^{-\frac12}rV|v_\lambda|^2\Big)dx\\
&\ge 
\nu\norm{r^{-1}u}_{L^2}^2+\nu\ep\lambda^{-\frac12}\norm{r^{-1/2}u}_{L^2}^2. 
\end{align*}
Hence it suffices to show that there exist $\delta<\nu$ and $C_\delta>0$ being independent of $\ep$ and $\lambda$ such that the right hand side of \eqref{proof_C_7} is bounded from above by 
$\delta\norm{r^{-1}u}_{L^2}^2+C\norm{rf}_{L^2}^2$. 

For the first term of the right hand side of \eqref{proof_C_7}, the Cauchy-Schwarz inequality and Hardy's inequality \eqref{Hardy} yield that there exists $C_1>0$ independent of $\ep$ and $\lambda$ such that
\begin{align*}
\ep\lambda^{-\frac12}\Big|\Re \int e^{i\lambda^{\frac12} r}(\partial_rv_\lambda) \overline udx\Big|
&\le \sqrt \ep \norm{\nabla v_\lambda}_{L^2}\norm{u}_{L^2}\\
&\le \delta_1\norm{r^{-1}u}_{L^2}^2+C_1\delta_1^{-1}\ep\norm{u}_{L^2}^2. 
\end{align*}
for any $\delta_1>0$. Here \eqref{proof_C_3} and Hardy's inequality imply
\begin{align*}
\ep\norm{u}_{L^2}^2
\le \int |fu|dx
\le \delta_1^2\norm{r^{-1}u}_{L^2}^2 + C_1'\delta_1^{-2}\norm{rf}_{L^2}^2.
\end{align*}
with some universal constant $C_1'>0$. Hence we obtain
\begin{align}
\label{proof_C_12_1}
\ep\lambda^{-\frac12}\Big|\Re \int e^{i\lambda^{\frac12} r}(\partial_rv_\lambda) \overline udx\Big|
\le (1+C_1)\delta_1\norm{r^{-1}u}_{L^2}^2+C_1C_1'\delta_1^{-3}\norm{rf}_{L^2}^2. 
\end{align}
For other terms, similar computations yield% there exist universal constants $C_2,C_3>0$ such that
\begin{align}
\label{proof_C_15}
\Big|(n-1)\Re\int f\overline udx\Big|
&\le \delta_1\norm{r^{-1}u}_{L^2}^2+C_2\delta_1^{-1}\norm{rf}_{L^2}^2,\\
\Big|2\Re \int rf\overline{e^{i\lambda^{\frac12}r}\partial_rv_\lambda}dx\Big|
&\le \delta_1\norm{r^{-1}u}_{L^2}^2+C_2\delta_1^{-1}\norm{rf}_{L^2}^2,\\
%\label{proof_C_15_1}\Big|2\Re \int rf\overline{e^{i\lambda^{\frac12}r}\partial_rv_\lambda}dx\Big|&\le \delta_1\norm{\nabla v_\lambda}_{L^2}^2+C_4\delta_1^{-1}\norm{rf}_{L^2}^2,\\
\label{proof_C_16}
\ep\lambda^{-\frac12}\Big|\int rf\overline udx\Big|
\le \sqrt\ep\norm{rf}_{L^2}\norm{u}_{L^2}^2
&\le \delta_1\norm{r^{-1}u}_{L^2}^2+C_3\delta_1^{-3}\norm{rf}_{L^2}^2. 
\end{align}
\eqref{proof_C_12_1}--\eqref{proof_C_16} show that the right hand side of \eqref{proof_C_7} is bounded from above by $\delta\norm{r^{-1}u}_{L^2}^2+C\norm{rf}_{L^2}^2$ with $\delta=(4+C_1)\delta_1$ and $C=(C_1C_1'+C_2+C_3+C_4)\delta_1^{-3}$. Choosing $\delta_1>0$ so small  that $(4+C_1)\delta_1<\nu$ we obtain \eqref{proof_C_10_0}, which completes the proof.
%$$\norm{\nabla v_\lambda}_{L^2}^2\le C_6\delta^{-1}\delta_1^{-3}\norm{rf}_{L^2}^2 $$uniformly in $\lambda$ and $\ep$, and complete the proof. 
\end{proof}
\noindent
{\bf Acknowledgments.} The author would like to thank Jean-Marc Bouclet for valuable discussions and for hospitality at the Institut de Math\'ermatiques de Toulouse, Universit\'e Paul Sabatier, where this work has been done. The author also thank the anonymous referees for useful suggestions that helped to improve the presentation of this paper. He is partially supported by JSPS Grant-in-Aid for Young Scientists (B) (No. 25800083) and by Osaka University Research Abroad Program (No. 150S007).

%%%%%%%%%% Bibliography %%%%%%%%%%%%%%%%%%%

\end{document}